\documentclass[12pt]{article}
\usepackage{amsmath,amssymb,amsthm,eucal, mathtools, mathrsfs}
\usepackage[T1]{fontenc}
\usepackage{color}
\usepackage[all]{xy}
\usepackage{slashed}
\usepackage{enumitem}
\usepackage[pdftex, draft=false]{hyperref} 
\usepackage{leftindex}
\title{\vspace*{-1pc}%
Curvature and Weitzenbock formula for spectral triples}

\author{Bram Mesland\S$^*$, Adam Rennie\dag
\thanks{email: 
\texttt{b.mesland@math.leidenuniv.nl}, \texttt{renniea@uow.edu.au}
}
\\[3pt]
\S Mathematisch Instituut, Universiteit Leiden, Netherlands
\\[3pt]
\dag School of Mathematics and Applied Statistics, University of Wollongong\\
Wollongong, Australia
}


\topmargin=0pt
\advance\topmargin by -\headheight
\advance\topmargin by -\headsep
\textheight=8.9in  
\oddsidemargin=15pt
\evensidemargin=\oddsidemargin
\marginparwidth=0.5in
\textwidth=6.5in  


\makeatletter
\def\section{\@startsection{section}{1}{\z@}{-3.5ex plus -1ex minus
  -.2ex}{2.3ex plus .2ex}{\large\bf}}
\def\subsection{\@startsection{subsection}{2}{\z@}{-3.25ex plus -1ex
  minus -.2ex}{1.5ex plus .2ex}{\normalsize\bf}}
\makeatother

\numberwithin{equation}{section} 

\theoremstyle{plain} 
\newtheorem{thm}{Theorem}[section]
\newtheorem{lemma}[thm]{Lemma}
\newtheorem{prop}[thm]{Proposition}
\newtheorem{corl}[thm]{Corollary}

\theoremstyle{definition} 
\newtheorem{defn}[thm]{Definition}
\newtheorem*{ass*}{Standing assumption}
\newtheorem{example}[thm]{Example} 
\theoremstyle{remark} 
\newtheorem{rmk}[thm]{Remark}


\DeclareMathOperator{\End}{End}   
\DeclareMathOperator{\Hom}{Hom}   

\newcommand{\nablar}{\overrightarrow{\nabla}} 
\newcommand{\nablal}{\overleftarrow{\nabla}} 

\newcommand{\B}{\mathcal{B}}  
\newcommand{\C}{\mathbb{C}}   

\newcommand{\D}{\mathcal{D}}  
\renewcommand{\d}{\mathrm{d}_\Psi} 
\newcommand{\dee}{\mathrm{d}} 
\renewcommand{\H}{\mathcal{H}}  
\newcommand{\ox}{\otimes}     
\newcommand{\X}{\mathcal{X}}  
 
\newcommand{\Y}{\mathcal{Y}} 
\newcommand{\Z}{\mathbb{Z}}   

\newcommand{\bra}[1]{\langle#1|} 
\newcommand{\pairing}[2]{\langle #1\mathbin{|}#2\rangle} 
\newcommand{\stroke}{\mathbin|}     
\newcommand{\alphar}{\overrightarrow{\alpha}}
\newcommand{\alphal}{\overleftarrow{\alpha}}

\def\pairL_#1(#2|#3){{}_{#1}(#2\stroke#3)} 
\def\pairR(#1|#2)_#3{(#1\stroke#2)_{#3}} 
\def\scal<#1|#2>{\langle#1\stroke#2\rangle} 


\newbox\ncintdbox \newbox\ncinttbox 
	\setbox0=\hbox{$-$}
	\setbox2=\hbox{$\displaystyle\int$}
	\setbox\ncintdbox=\hbox{\rlap{\hbox
		to \wd2{\hskip-.125em \box2\relax\hfil}}\box0\kern.1em}
	\setbox0=\hbox{$\vcenter{\hrule width 4pt}$}
	\setbox2=\hbox{$\textstyle\int$}
	\setbox\ncinttbox=\hbox{\rlap{\hbox
		to \wd2{\hskip-.175em \box2\relax\hfil}}\box0\kern.1em}


\hyphenation{geo-me-try ma-ni-fold ma-ni-folds pro-duct pro-ducts}

\allowdisplaybreaks
\begin{document}

\maketitle

\vspace{-2pc}

\begin{abstract}
Using the Levi-Civita connection on the noncommutative differential one-forms of a spectral triple $(\B,\H,\D)$, we define the full Riemann curvature tensor, the Ricci curvature tensor and scalar curvature. We give a definition of Dirac spectral triples and  derive a general Weitzenbock formula for them.
We apply these tools to $\theta$-deformations of compact Riemannian manifolds. We show that the Riemann and Ricci tensors transform naturally under $\theta$-deformation, whereas the connection Laplacian, Clifford representation of the curvature and the scalar curvature are all invariant under deformation.
\end{abstract}

\tableofcontents
\parskip=6pt
\parindent=0pt

\section{Introduction}
\label{sec:intro}

\vspace{-8pt}

This article presents 
the curvature tensor
for spectral triples using the formalism of \cite{MRLC}.
For connections on the module of one-forms we also define Ricci and scalar curvature.
We then  prove a general Weitzenbock formula for Dirac 
spectral triples, and exemplify this by establishing the formula for $\theta$-deformations of commutative manifolds.

One recent approach to curvature in noncommutative geometry is via heat kernel coefficients, \cite{CT11,CM14}.
We do not pursue this approach, rather we adapt the long standing algebraic definitions to the context of spectral triples. In particular we exploit our construction of the Levi-Civita connection \cite{MRLC} to define a preferred curvature.

The curvature tensors we present are  concrete operators computed as $\nabla^2$ and contractions thereof, familiar from differential geometry and algebra, see \cite{BMBook} and references therein. Calculations of these curvatures is of comparable difficulty to the manifold case, so that for situations with reasonable symmetry they can be done by hand.

To relate the operator of a spectral triple to the curvature, we introduce the class of Dirac spectral triples, emulating the notion of Dirac bundle on a manifold. In this setting we can define connection Laplacians and obtain a Weitzenbock formula.
The positivity of connection Laplacians relies on the vanishing of a divergence term, just as in the manifold case.

The formulation of a noncommutative Weitzenbock formula requires the existence of a braiding on the module of two-tensors. On a manifold, the flip map plays the role of the braiding. 
To justify our formula, we explain this issue in detail in Section \ref{subsec:in-detail}. 

While the flip map is typically not well-defined on noncommutative tensor products,
there are numerous examples of braidings in the algebraic context \cite{BMBook}. We provide 
examples of 
braidings for 
$\theta$-deformations \cite{MRLC}, and the Podle\'s sphere \cite{MRPods}.
In \cite{MRLC}  braidings appeared for a related reason, and were used to obtain reality conditions on two-tensors and uniqueness of Hermitian and torsion-free bimodule connections on the module of one-forms.

In \cite{MRLC} we constructed the unique Levi-Civita connection for $\theta$-deformations. Here we show that the Levi-Civita connection of the $\theta$-deformed manifold coincides with the $\theta$-deformation of the Levi-Civita connection of the manifold. 
This allows us to establish the Weitzenbock formula and show that the scalar curvature  remains undeformed, while the full curvature tensor and Ricci tensor transform naturally under deformation.

Section 2 recalls the framework of \cite{MRLC}, Section 3 discusses curvature tensors, Section 4 introduces Dirac modules and proves a general Weitzenbock formula. Section 5 presents the example of $\theta$-deformations.

{\bf Acknowledgements} The authors thank the Erwin Schr\"{o}dinger 
Institute, 
Austria, for hospitality and support during the 
early stages of this work. 
BM thanks the University of Wollongong, and AR thanks the University of Leiden for hospitality.
We thank Alan Carey, Giovanni Landi and Walter van Suijlekom for important 
discussions.

\section{Background on noncommutative forms and connections}
\label{sec:ass1}
This section sets notation and summarises the setup needed to obtain a (unique) Hermitian torsion-free connection on the module of one-forms of a spectral triple. We do this by reformulating the assumptions and results of \cite{MRLC} in the context of spectral triples.
\subsection{Modules of forms}
Throughout this article we are looking at the differential structure provided by a spectral triple. We never require a compact resolvent condition, and so have omitted it from the following, otherwise standard, definition.
See \cite[Examples 2.4--2.6]{MRLC}.
\begin{defn}
\label{defn:NDS}
Let $B$ be a $C^{*}$-algebra. A spectral triple for $B$ is a triple $(\B,\H,\D)$ where $\B\subset B$ is a local \cite[Definition 2.1]{MRLC} dense $*$-subalgebra, $\H$ is a  Hilbert space equipped with a $*$-representation $B\to \mathbb{B}(\H)$, and $\D$ an unbounded self-adjoint regular operator $\D:\,{\rm dom}(\D)\subset \H\to \H$ such that for all $a\in\B$
\[
a\cdot{\rm dom}(\D)\subset{\rm dom}(\D)\quad \mbox{and} \quad [\D,a]\quad\mbox{is bounded}.
\]
\end{defn}

Given a spectral triple $(\B,\H,\D)$, the module of one-forms is the space
\[
\Omega^{1}_{\D}(\B):=\textnormal{span}\left\{a[\D,b]:a,b\in\B\right\}\subset\mathbb{B}(\H).
\]
We obtain a first order differential calculus $\mathrm{d}:\B\to \Omega^{1}_{\D}(\B)$ by setting $\mathrm{d}(b):=[\D,b]$. This calculus carries an involution $(a[\D,b])^{\dag}:=[\D,b]^{*}a^{*}$ induced by the operator adjoint. Thus $(\Omega^{1}_{\D}(\B),\dag)$ is a first order differential structure in the sense of   \cite{MRLC}.

We recollect some of the constructions of \cite{MRLC} for $(\Omega^{1}_{\D}(\B),\dag)$. Writing $T^{k}_{\D}(\B):=\Omega_\D^1(\B)^{\ox k}$, the universal differential
forms $\Omega^*_{u}(\B)$ admit a representation
\begin{align}
\widehat{\pi}_\D:\Omega^k_{u}(\B)\to T^{k}_{\D}(\B)\quad \widehat{\pi}_\D(a_0\delta(a_1)\cdots\delta(a_k))&=a_0[\D,a_1]\ox\cdots\ox[\D,a_k],\\
\pi_\D:=m\circ\widehat{\pi}_{\D}:\Omega^k_{u}(\B)\to \Omega^k_\D(\B)\quad \pi_\D(a_0\delta(a_1)\cdots\delta(a_k))&=a_0[\D,a_1]\cdots[\D,a_k],
\end{align}
where $m:T^{k}_{\D}(\B)\to\Omega^k_\D(\B)$ is the multiplication map.
Neither $\pi_\D$ nor $\widehat{\pi}_\D$ are maps of differential algebras, but are $\B$-bilinear maps of associative $*$-$\B$-algebras, \cite{Landi,MRLC}. The $*$-structure on $\Omega_\D^*(\B)$ is determined by the adjoint of linear maps on $\H$, while the $*$-structure on $\oplus_kT^{k}_{\D}(\B)$ is given by the operator adjoint and
\[
(\omega_1\ox\omega_2\ox\cdots\ox\omega_k)^\dag:= 
\omega_k^*\ox_\B\cdots\ox\omega_2^*\ox_\B\omega_1^*.
\]
We will write $\omega^\dag:=\omega^*$ for one forms $\omega$ as well, though we will adapt this notation when we come to $\theta$-deformations. 
 
The maps $\widehat{\pi}:\Omega^{*}_{u}(\B)\to T^*_\D$ and $\delta:\Omega^{k}_{u}(\B)\to \Omega^{k+1}_{u}(\B)$ are typically not compatible  in the sense that $\delta$ need not map $\ker \widehat{\pi}$ to itself. Thus in general,  $T^{*}_{\D}(\B)$ can not be made into a differential algebra.
The issue to address is that
there are universal
forms $\omega\in \Omega^n_u(\B)$ for which $\widehat{\pi}(\omega)=0$
but $\widehat{\pi}(\delta(\omega))\neq0$. The latter are known as {\em junk forms},  \cite[Chapter VI]{BRB}. 
We denote the $\B$-bimodules of junk forms by
\begin{align*}
J^k_\D(\B)&=\{\pi_\D(\delta(\omega)):\,\pi_\D(\omega)=0\}\quad\mbox{and}\quad
JT^k_\D(\B)=\{\widehat{\pi}_\D(\delta(\omega)):\,\widehat{\pi}_\D(\omega)=0\}.
\end{align*}
Observe that the junk submodules only depend on the representation of the universal forms.
\begin{defn}
\label{ass:fgp-metric-junk}
A second order differential structure $(\Omega^{1}_{\D},\dag,\Psi)$ is a first order differential structure $(\Omega^{1}_{\D}(\B),\dag)$ together with an idempotent $\Psi:T^{2}_{\D}\to T^{2}_{\D}$ satisfying $\Psi\circ\dag=\dag\circ\Psi$ and $JT^2_\D(\B)\subset{\rm Im}(\Psi)\subset m^{-1}(J^2_\D(\B))$. A second order differential structure is Hermitian if
$\Omega^1_\D(\B)$ is a finitely generated projective right $\B$-module with right inner product $\pairing{\cdot}{\cdot}_\B$, such that $\Psi=\Psi^{2}=\Psi^{*}$ is a projection. 
\end{defn}

A second order differential structure admits an exterior derivative $\d:\Omega^1_\D(\B)\to T^2_\D(\B)$ 
via 
\begin{equation}
\label{eq: second-order-diff}
\d(\rho)=(1-\Psi)\circ \widehat{\pi}_\D\circ\delta\circ \pi_\D^{-1}(\rho).
\end{equation} 
The differential satisfies $\d([\D,b])=0$ for all $b\in\B$. A differential on one-forms allows us to define curvature for modules, and formulate torsion for connections on one-forms. 

For an Hermitian differential structure $(\Omega^{1}_{\D}(\B),\dag,\Psi,\pairing{\cdot}{\cdot})$, the module of one-forms $\Omega^1_\D(\B)$ is also a finite projective left module \cite[Lemma 2.12]{MRLC} with inner product ${}_\B\pairing{\omega}{\rho}=\pairing{\omega^\dag}{\rho^\dag}_\B$. Thus all tensor powers $T^{k}_{\D}(\B)$ carry right and left inner products. Using these we obtain bimodule isomorphisms
\begin{align}
\alphar&:T^{n+k}_{\D}\to \overrightarrow{\textnormal{Hom}}^{*}_{\B}(T^{k}_{\D},T^{n}_{\D}), \quad\alphar(\omega\otimes \eta)(\rho):=\omega\pairing{\eta^{\dag}}{\rho}\nonumber\\
\alphal&:T^{n+k}_{\D}\to \overleftarrow{\textnormal{Hom}}^{*}_{\B}(T^{k}_{\D},T^{n}_{\D}), \quad\alphal( \eta\otimes \omega)(\rho):=\pairing{\rho^{\dag}}{\eta}\omega,
\label{eq:alphas}
\end{align}
where $\rho,\eta\in T^{k}_{\D}$ and $\omega\in T^{n}_{\D}$.
Inner products on $\Omega^k_\D(\B)$ do not arise automatically.

The two inner products on $\Omega^1_\D(\B)$ give rise to equivalent norms on $\Omega^1_\D(\B)$, and using results of \cite{KajPinWat}, $\Omega^1_\D(\B)$ is a bi-Hilbertian bimodule of finite index. To explain what this means for us, recall \cite{FL02} that a (right) frame for $\Omega^1_\D(\B)$ is a (finite) collection of elements $(\omega_j)$ that satisfy
\[
\rho=\sum_j\omega_j\pairing{\omega_j}{\rho}_\B
\]
for all $\rho\in \Omega^1_\D(\B)$. 
A finite projective bi-Hilbertian module has a  ``line element'' or ``quantum metric'' \cite{BMBook} given by 
\begin{equation}
G=\sum_j\omega_j\ox\omega_j^\dag.
\label{eq:Gee}
\end{equation} 
The line element $G$ is independent of the choice of frame, is central, meaning that $bG=Gb$ for all $b\in\B$, and
\[
{\rm span}_\B\left\{\sum_j\omega_j\ox\omega_j^\dag:\,\mbox{for any frame }(\omega_j)\right\}
\]
is a complemented submodule of $T^{2}_{\D}$. The endomorphisms $\alphar(G)$ and $\alphal(G)$ coincide with the identity operator on $\Omega^{1}_{\D}(\B)$. The inner product is computed  via 
\begin{equation}
-g(\omega\ox\rho):=\pairing{G}{\omega\ox\rho}_\B=\pairing{\omega^\dag}{\rho}_\B.
\label{eq:Riemann}
\end{equation}
Such bilinear inner products appear in \cite{BMBook,BGJ2,BGJ1}. The element 
\begin{equation}
e^\beta:=\sum_j{}_\B\pairing{\omega_j}{\omega_j}=-g(G)\in\B
\label{eq:ee-beta}
\end{equation} 
is independent of the choice of right frame, and
is central, positive and invertible (provided the left action of $\B$ on $\Omega^1_\D(\B)$ is faithful). Setting $Z=e^{-\beta/2}\sum_j\omega_j\ox\omega_j^\dag$, the endomorphisms $\alphar(Z\otimes Z)$ and $\alphal(Z\otimes Z)$ of $T^{2}_\D(\B)$ are projections. 
\subsection{Existence of Hermitian torsion-free connections}
A right connection on a right $\B$-module $\X$ is a $\C$-linear map
\[
\nablar:\X\to \X\otimes_{\B}\Omega^{1}_{\D},\quad\mbox{such that}\quad
\overrightarrow{\nabla}(x a)=\overrightarrow{\nabla}(x)a+x\ox[\D,a].
\]
There is a similar definition for left connections on left modules.
Connections always exist on finite projective modules. Given a connection $\overrightarrow{\nabla}$ on a right inner product $\B$-module $\X$ we say that $\overrightarrow{\nabla}$ is Hermitian \cite[Definition 2.23]{MRLC} if
for all $x,y\in\X$ we have
\[
-\pairing{\overrightarrow{\nabla}x}{y}_\B+\pairing{x}{\overrightarrow{\nabla}y}_\B=[\D,\pairing{x}{y}_\B].
\]
For left connections we instead require
\[
{}_\B\pairing{\overleftarrow{\nabla}x}{y}-{}_\B\pairing{x}{\overleftarrow{\nabla}y}=[\D,{}_\B\pairing{x}{y}].
\]
If furthermore $\X$ is a $\dag$-bimodule \cite[Definition 2.8]{MRLC} like $T^k_\D$, then for each right connection $\overrightarrow{\nabla}$ on $\X$ there is a conjugate left connection $\overleftarrow{\nabla}$ given by
$\overleftarrow{\nabla}=-\dag\circ\overrightarrow{\nabla}\circ\dag$ which is Hermitian if and only if $\overrightarrow{\nabla}$ is Hermitian.
\begin{example}

Given a (right) frame $v=(x_j)\subset \X$ we get a left- and right Grassmann connection
\[
\nablal^{v}(x):=[\D,{}_\B\pairing{x}{x_{j}^{\dag}}]\otimes x_{j}^{\dag},\quad \nablar^{v}(x):=x_j\ox[\D,\pairing{x_j}{x}_\B],\qquad x\in\X.
\]
The Grassmann connections are Hermitian and conjugate, that is $\nablal^{v}=-\dag\circ\nablar^{v}\circ\dag$. 
A pair of conjugate connections on $\X$ are both Hermitian if and only if for any right frame $(x_j)$ \cite[Proposition 2.30]{MRLC}
\begin{equation}
\overrightarrow{\nabla}(x_j)\ox x_j^\dag+x_j\ox\overleftarrow{\nabla}(x^\dag_j)=0.
\label{eq:conj-pair-Herm}
\end{equation}
\end{example}

The differential \eqref{eq: second-order-diff} allows us to  ask whether a connection on $\Omega^1_\D(\B)$ is torsion-free, meaning \cite[Section 4.1]{MRLC} that for any frame
\[
1\ox(1-\Psi)\big(\overrightarrow{\nabla}(\omega_j)\ox\omega_j^\dag+\omega_j\ox\d(\omega_j^\dag)\big)=0.
\]
For a Hermitian right connection, being torsion-free is equivalent to 
$(1-\Psi)\circ\overrightarrow{\nabla}=-\d$. For the conjugate left connection this becomes $(1-\Psi)\circ\overleftarrow{\nabla}=\d$, \cite[Proposition 4.5]{MRLC}.

Given a right frame $(\omega_j)\subset\Omega^1_\D(\B)$ we define
\[
W:=\d(\omega_j)\ox\omega_j^\dag\quad\mbox{and}\quad 
W^\dag:=\omega_j\ox\d(\omega_j^\dag).
\]

\begin{defn}
\label{ass:concordance}
Let $(\Omega^{1}_{\D}(\B),\dag,\Psi,\pairing{\cdot}{\cdot})$ be an Hermitian differential structure.
Define the projections $P:=\Psi\ox1$ and $Q:=1\ox\Psi$ on $T^{3}_{\D}(\B)$. The differential structure is concordant if
$T^{3}_{\D}=({\rm Im}(P)\cap{\rm Im}(Q))\oplus (\rm Im (1-P) + \rm Im (1-Q))$. Let $\Pi$ be the projection onto ${\rm Im}(P)\cap{\rm Im}(Q)$.
The differential structure is $\dag$-concordant if \cite[Definition 4.30]{MRLC}
\begin{equation}
(1+\Pi-PQ)^{-1}(W+PW^\dag)=(1+\Pi-QP)^{-1}(W^\dag+QW).
\label{eq:Bram's-condition}
\end{equation}
\end{defn}
The condition \eqref{eq:Bram's-condition} expresses a compatibility between $\Psi$, $\dag$, and the inner product, as encoded by the frame $(\omega_j)$. Importantly, despite being defined in terms of a frame, the three tensor
\[(1+\Pi-PQ)^{-1}(W+PW^\dag)-(1+\Pi-QP)^{-1}(W^\dag+QW),\]
is independent of the choice of frame. In particular, the
 $\dag$-concordance condition, which requires this three tensor to vanish, is frame independent \cite[Proposition 4.33]{MRLC}.

\begin{thm}
\label{thm:existence}
Let $(\Omega^{1}_{\D}(\B),\dag,\Psi,\pairing{\cdot}{\cdot})$ be an Hermitian differential structure. Then there exists an Hermitian and torsion-free (right) connection 
\[
\overrightarrow{\nabla}:\Omega^1_\D(\B)\to T^{2}_\D(\B)
\]
if and only if $(\Omega^{1}_{\D}(\B),\dag,\Psi,\pairing{\cdot}{\cdot})$ is  $\dag$-concordant.
\end{thm}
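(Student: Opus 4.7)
The plan is to reduce the existence question to the solvability of a linear system on $T^{3}_{\D}(\B)$ and identify the consistency condition of that system with the $\dag$-concordance identity \eqref{eq:Bram's-condition}.

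Fix a right frame $(\omega_j)$. Via the isomorphism $\alphar$ of \eqref{eq:alphas}, any right connection $\nablar$ on $\Omega^{1}_\D(\B)$ is determined by the three-tensor $T:=\nablar(\omega_j)\ox\omega_j^\dag\in T^{3}_\D(\B)$, and this assignment is a bijection. I translate the two required properties into equations on $T$. The torsion $(1-\Psi)\circ\nablar+\d$ is a right $\B$-linear map $\Omega^{1}_\D(\B)\to T^{2}_\D(\B)$, and under $\alphar$ its vanishing corresponds to the three-tensor $(1-P)T+W$; thus torsion-freeness is equivalent to $(1-P)T=-W$. The conjugate $\nablal=-\dag\circ\nablar\circ\dag$ is automatically torsion-free when $\nablar$ is, using $\Psi\dag=\dag\Psi$ and the compatibility $\d\circ\dag=\dag\circ\d$. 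The Hermitian identity \eqref{eq:conj-pair-Herm} rearranges to $\omega_j\ox\nablal(\omega_j^\dag)=-T$, and applying $1\ox(1-\Psi)$ delivers $(1-Q)T=-W^\dag$. Hence the theorem reduces to showing that $\dag$-concordance is equivalent to the existence of $T\in T^{3}_\D(\B)$ satisfying
\begin{equation*}
(1-P)\,T=-W,\qquad (1-Q)\,T=-W^\dag.
\end{equation*}

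The concordance decomposition $T^{3}_\D(\B)=\range\Pi\oplus(\range(1-P)+\range(1-Q))$ leaves the $\range\Pi$-component of $T$ unconstrained, reducing the problem to the complementary summand. Since $W\in\range(1-P)$ and $W^\dag\in\range(1-Q)$, and since $PQ=QP=\mathrm{id}$ on $\range\Pi$ (as $\Pi$ projects onto $\range P\cap\range Q$), the operators $1+\Pi-PQ$ and $1+\Pi-QP$ act as the identity on $\range\Pi$ and hence are invertible if and only if they are invertible on the complementary summand, which concordance supplies. Algebraic manipulation of the two equations, applying $P$ to $(1-Q)T=-W^\dag$ and $Q$ to $(1-P)T=-W$ while accounting for the $\Pi$-corrections, yields the closed forms $PT=(1+\Pi-PQ)^{-1}(W+PW^\dag)$ and $QT=(1+\Pi-QP)^{-1}(W^\dag+QW)$, whence $T=PT-W=QT-W^\dag$. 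The two expressions for $T$ agree precisely when equation \eqref{eq:Bram's-condition} holds, i.e.\ exactly when the structure is $\dag$-concordant. Conversely, under $\dag$-concordance these formulas furnish a solution, and frame-independence of the $\dag$-concordance condition \cite[Proposition 4.33]{MRLC} ensures the resulting $\nablar$ is canonical.

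The main obstacle is the algebra of the third paragraph: deriving the operators $1+\Pi-PQ$ and $1+\Pi-QP$ from the coupled system requires careful tracking of the way $PQ$ and $QP$ deviate from the identity outside $\range\Pi$, and verifying that the resulting compatibility is precisely equation \eqref{eq:Bram's-condition} of Definition \ref{ass:concordance}, rather than a stronger or weaker variant.
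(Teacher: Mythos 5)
Your overall strategy --- encode a connection by its three-tensor $T=\nablar(\omega_j)\ox\omega_j^\dag$ relative to the Grassmann connection, translate the two requirements into the linear system $(1-P)T=-W$, $(1-Q)T=-W^\dag$ on $T^{3}_{\D}(\B)$, and solve it with $(1+\Pi-PQ)^{-1}$ --- is the route the paper takes (its displayed formula $A=-(1+\Pi-PQ)^{-1}(W+PW^\dag)$ is precisely the solution of this system with $\Pi A=0$). But the step you yourself flag as the main obstacle is carried out incorrectly. Substituting $T=QT-W^\dag$ into $T=PT-W$ and using $PW=0$ gives $(1-PQ)T=-(W+PW^\dag)$, hence $(1-\Pi)T=-(1+\Pi-PQ)^{-1}(W+PW^\dag)$, and symmetrically $(1-\Pi)T=-(1+\Pi-QP)^{-1}(W^\dag+QW)$: the closed forms are for $T$ itself and carry a minus sign. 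Your identities $PT=(1+\Pi-PQ)^{-1}(W+PW^\dag)$ and $T=PT-W$ are not correct (comparing with the true solution they would force $(1+P)(1+\Pi-PQ)^{-1}(W+PW^\dag)=0$, i.e.\ $W+PW^\dag=0$), and the compatibility condition they produce is \eqref{eq:Bram's-condition} offset by $W-W^\dag$, which is not $\dag$-concordance. With the corrected closed forms the two expressions for $(1-\Pi)T$ agree exactly when \eqref{eq:Bram's-condition} holds, and one checks the converse directly: $(1-PQ)T=-(W+PW^\dag)$ gives $PT=PQT-PW^\dag$ (since $PW=0$) and hence $(1-P)T=-W$, with the other equation following from $QW^\dag=0$.

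There is a second, smaller gap in the reduction itself. The Hermitian condition \eqref{eq:conj-pair-Herm} is not the equation $(1-Q)T=-W^\dag$: since $\omega_j\ox\nablal(\omega_j^\dag)=-T^\dag$ for the conjugate connection, \eqref{eq:conj-pair-Herm} is the full identity $T=T^\dag$ in $T^{3}_{\D}(\B)$, and $(1-Q)T=-W^\dag$ is only its image under $1-Q$ (given torsion-freeness and $\dag\circ P\circ\dag=Q$). So a solution of your linear system need not define a Hermitian connection, and the ``if'' direction of the theorem is not yet complete. This is repairable: applying $\dag$ shows that $T^\dag$ solves the same system whenever $T$ does, so one may symmetrise; better, the canonical solution above satisfies $\Pi T=0$ and is the unique such solution, and since $T^\dag$ also solves the system with $\Pi T^\dag=0$, one gets $T=T^\dag$ for free.
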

To obtain such a connection we use the maps $\alphar,\alphal$ of Equation \eqref{eq:alphas}, and add to the Grassmann connection $\nablar^v$ of a frame $v=(\omega_j)$ 
the one-form-valued endomorphism $\alphar(A)\in \overrightarrow{\textnormal{Hom}}^{*}_{\B}(\Omega^{1}_{\D}, T^{2}_{\D}),$  where
\[
A=-(1+\Pi-PQ)^{-1}(W+PW^\dag)\in T^{3}_{\D}.
\]
If instead we start with the left Grassmann connection $\nablal^v$ we subtract the connection form $\alphal(A)$. The two connections are conjugate.
\begin{example}
The construction for compact Riemannian manifolds yields the Levi-Civita connection on the cotangent bundle, \cite[Theorem 6.15]{MRLC}.
\end{example}

\subsection{Uniqueness of Hermitian torsion-free connections}
For uniqueness, we need the left and right representations $\alphal,\alphar$ as well as the definition of a special kind of bimodule connection.
\begin{defn}
\label{ass:ess}
Suppose that $\sigma: T^{2}_\D(\B)\to T^2_\D(\B)$ is an invertible bimodule map such that $\dag\circ\sigma=\sigma^{-1}\circ\dag$ and such that
the conjugate connections 
$\nablar,\nablal$ satisfy
\[
\sigma\circ\overrightarrow{\nabla}=\overleftarrow{\nabla}.
\]
Then we say that $\sigma$ is a braiding and that  $(\nablar,\sigma)$ is a $\dag$-bimodule connection.
\end{defn}
We denote by $\mathcal{Z}(M)$ the centre of a $\B$-bimodule $M$.
\begin{thm}
\label{thm:uniqueness}
Let  $(\Omega^{1}_{\D}(\B),\dag,\Psi,\pairing{\cdot}{\cdot})$ be a Hermitian differential structure. 
Suppose that $\sigma:T^{2}_{\D}\to T^{2}_{\D}$ is a braiding for which the map 
\[
\alphar+\sigma^{-1}\circ\alphal:\mathcal{Z}({\rm Im}(\Pi))\to\overleftrightarrow{{\rm Hom}}(\Omega^1_\D,T^2_\D)
\]
is injective. If there exists a Hermitian torsion-free $\sigma$-$\dag$-bimodule connection, then it is unique. 
\end{thm}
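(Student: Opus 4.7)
The plan is to suppose $\nablar_1,\nablar_2$ are two Hermitian torsion-free $\sigma$-$\dag$-bimodule connections and show that the difference $\Delta:=\nablar_1-\nablar_2:\Omega^1_\D(\B)\to T^{2}_\D(\B)$ is zero. By the right Leibniz rule $\Delta$ is right $\B$-linear, and the bimodule identity $\sigma\nablar_i=\nablal_i=-\dag\nablar_i\dag$ together with $\sigma$ being bimodule-linear gives $\Delta(b\omega)=\sigma^{-1}((\sigma\Delta)(b\omega))=\sigma^{-1}(b\,(\sigma\Delta)(\omega))=b\Delta(\omega)$, so $\Delta$ is also left $\B$-linear. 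Via the isomorphism \eqref{eq:alphas} I would write $\Delta=\alphar(X)$ for a unique $X\in T^{3}_\D(\B)$; the bilinearity of $\Delta$ then places $X$ in the centre of $T^{3}_\D(\B)$, compatibly with the domain $\mathcal{Z}({\rm Im}(\Pi))$ in the hypothesis.

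I would then feed in the three defining properties. The torsion-free conditions $(1-\Psi)\nablar_i=-\d$ give $(1-\Psi)\Delta=0$, so $X\in{\rm Im}(P)$. The bimodule identity $\sigma\Delta=-\dag\Delta\dag$ combined with the direct calculation
\[
\dag\circ\alphar(X)\circ\dag=\alphal(\dag X),
\]
where $\dag$ on $T^{3}_\D(\B)$ reverses tensor factors and daggers each, rewrites the bimodule constraint as $\alphar(X)+\sigma^{-1}\alphal(\dag X)=0$. The Hermitian input enters through the conjugate-pair characterisation \eqref{eq:conj-pair-Herm} applied to $(\Delta,-\dag\Delta\dag)$. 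Expanding $\Delta=\alphar(X)$ and summing over a right frame $(\omega_j)$, the frame identities $\xi=\sum_j\omega_j\pairing{\omega_j}{\xi}_\B$ and $\xi^\dag=\sum_j\pairing{\xi}{\omega_j}_\B\omega_j^\dag$ (used around \eqref{eq:Gee}) collapse $\sum_j\alphar(X)(\omega_j)\ox\omega_j^\dag$ to $X$ and $\sum_j\omega_j\ox\dag\alphar(X)(\omega_j)$ to $\dag X$, reducing the Hermitian identity to $\dag X=X$.

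Combining, $X\in{\rm Im}(P)$ together with $\dag X=X$ forces $X\in\dag({\rm Im}(P))={\rm Im}(Q)$, using that $\Psi\circ\dag=\dag\circ\Psi$ implies $\dag\circ P\circ\dag=Q$. Hence $X\in{\rm Im}(P)\cap{\rm Im}(Q)={\rm Im}(\Pi)$. Substituting $\dag X=X$ into the bimodule relation turns it into $(\alphar+\sigma^{-1}\alphal)(X)=0$ with $X\in\mathcal{Z}({\rm Im}(\Pi))$, and the injectivity hypothesis of the theorem yields $X=0$, hence $\Delta=0$.

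The main obstacle is the frame calculation that produces $\dag X=X$ from \eqref{eq:conj-pair-Herm}: the two sums $\sum_j\alphar(X)(\omega_j)\ox\omega_j^\dag$ and $\sum_j\omega_j\ox\dag\alphar(X)(\omega_j)$ must be simplified by sliding $\B$-valued inner-product scalars across the balanced tensor product $\ox_\B$ and then applying the left/right frame identities, while carefully tracking the antilinear flip-and-dagger on $T^{2}_\D(\B)$ so that the second sum is recognised as $\dag X$. Once $\dag X=X$ is secured the remaining reductions — centrality of $X$, membership in ${\rm Im}(\Pi)$, and the passage from the $\dag X$-form of the bimodule relation to the diagonal form required by the hypothesis — follow directly from the compatibilities $\dag P\dag=Q$ and $\Psi\dag=\dag\Psi$.
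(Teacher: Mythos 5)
Your argument is correct and is essentially the intended one: the paper states Theorem \ref{thm:uniqueness} without proof (it is imported from \cite{MRLC}), and the uniqueness there is established exactly by showing that the difference of two such connections equals $\alphar(X)$ for a central three-tensor $X$ lying in ${\rm Im}(P)\cap{\rm Im}(Q)$ with $X^\dag=X$, which is then annihilated by $\alphar+\sigma^{-1}\circ\alphal$. All of your individual reductions (right/left linearity of the difference, $(1-P)X=0$ from torsion-freeness, $X=X^\dag$ from \eqref{eq:conj-pair-Herm} via the frame identities, $\dag\circ P\circ\dag=Q$, and the final appeal to the injectivity hypothesis) check out.
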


Even when we have the uniqueness given by Theorem \ref{thm:uniqueness} we do not have a closed formula for the part of the connection in ${\rm Im}(\Pi)$, but in examples this can usually be determined. For Riemannian manifolds, and indeed all examples so far, this part of the connection is zero.

\begin{defn}
If the $\dag$-concordant Hermitian differential structure $(\Omega^{1}_{\D}(\B),\dag,\Psi,\pairing{\cdot}{\cdot})$ admits a braiding $\sigma$ for which there exists a Hermitian torsion-free $\sigma$-$\dag$ bimodule connection, we call it the Levi-Civita connection, and denote it by $(\nablar^G,\sigma)$.
\end{defn}

For a compact Riemannian manifold $(M,g)$ equipped with a Dirac bundle $\slashed{S}\to M$, we have an associated spectral triple $(C^\infty(M),L^2(M,\slashed{S}),\slashed{D})$. Then $\Omega^{1}_{\slashed{D}}(C^\infty(M))\cong \Omega^1(M)\ox\C$ \cite[Chapter VI]{BRB}, and we let $\pairing{\cdot}{\cdot}_{g}$ be the inner product on $\Omega^{1}_{\slashed{D}}(C^\infty(M))$ induced by $g$. Moreover, the junk two-tensors in $T^{2}_{\slashed{D}}(C^{\infty}(M))\simeq T^{2}(M)$ coincide with the module of symmetric tensors \cite[Example 4.26]{MRLC}. Thus for $\sigma: T^{2}_{\slashed{D}}(M)\to T^{2}_{\slashed{D}}(M)$ the standard flip map we can set $\Psi:=\frac{1+\sigma}{2}$ for the junk projection. 

\begin{thm} 
\label{thm:LC-class}
Let $(M,g)$ be a compact Riemannian manifold with a Dirac bundle $\slashed{S}\to M$.  Then $(\Omega^{1}_{\slashed{D}}(C^{\infty}(M)),\dag, \Psi,\pairing{\cdot}{\cdot}_{g})$ is a $\dag$-concordant Hermitian differential structure and there exists a unique Hermitian torsion-free $\dag$-bimodule connection $(\nablar^{G},\sigma)$ on $\Omega^{1}_{\slashed{D}}(C^\infty(M))\cong \Omega^1(M)\ox\C$. The restriction $\nablar^{G}:\Omega^{1}(M)\ox1_\C\to \Omega^{1}(M)\ox1_\C$ coincides with the Riemannian connection on $\Omega^{1}(M)$.
\end{thm}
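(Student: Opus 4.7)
The plan is to verify the hypotheses of Theorems \ref{thm:existence} and \ref{thm:uniqueness} in sequence, then use uniqueness to identify the constructed connection with the classical Levi-Civita connection.

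First I would check that $(\Omega^{1}_{\slashed{D}}(C^{\infty}(M)),\dag, \Psi,\pairing{\cdot}{\cdot}_{g})$ is an Hermitian differential structure. By Serre-Swan the cotangent bundle gives $\Omega^{1}_{\slashed{D}}(C^\infty(M))\cong\Omega^1(M)\ox\C$ as a finitely generated projective $C^\infty(M)$-module, with right inner product supplied by the $\C$-sesquilinear extension of $g$. The flip $\sigma$ on $T^{2}_{\slashed{D}}$ is a $C^\infty(M)$-bimodule map with $\sigma^{2}=1$ and $\sigma^{*}=\sigma$, so $\Psi=(1+\sigma)/2$ is a self-adjoint projection. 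The identity $(\omega_1\ox\omega_2)^\dag=\omega_2^*\ox\omega_1^*$ gives $\Psi\circ\dag=\dag\circ\Psi$ directly, while $JT^{2}_{\slashed{D}}\subset\mathrm{Im}(\Psi)\subset m^{-1}(J^{2}_{\slashed{D}})$ is exactly \cite[Example 4.26]{MRLC}.

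For $\dag$-concordance I would invoke Theorem \ref{thm:existence} in reverse. The classical Levi-Civita connection $\nabla^g$, extended $\C$-linearly, is metric and torsion-free, and both properties translate directly into the paper's framework: metricity becomes the Hermitian condition after inserting the conjugate-linearity of $\pairing{\cdot}{\cdot}_g$, while classical torsion-freeness---that $d\omega$ equals the antisymmetrization of $\nabla^g\omega$---gives $(1-\Psi)\circ\nablar^g=-\d$ under the identification of $\Omega^{2}_{\slashed{D}}(C^\infty(M))$ with ordinary two-forms provided by \cite[Example 4.26]{MRLC}. The existence of a Hermitian torsion-free right connection then forces $\dag$-concordance via Theorem \ref{thm:existence}.

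For uniqueness I would take $\sigma$ as the braiding. The relations $\dag\circ\sigma=\sigma^{-1}\circ\dag$ (trivially, since $\sigma^{2}=1$ and $\sigma$ commutes with $\dag$) and $\sigma\circ\nablar^g=\nablal^g$ (a reformulation of the compatibility between the left and right Leibniz rules on a commutative base) are immediate. The delicate hypothesis is injectivity of $\alphar+\sigma^{-1}\circ\alphal$ on $\mathcal{Z}(\mathrm{Im}(\Pi))$. Since $C^\infty(M)$ is commutative, $\mathcal{Z}(\mathrm{Im}(\Pi))=\mathrm{Im}(\Pi)$, and $\mathrm{Im}(\Pi)=\mathrm{Im}(P)\cap\mathrm{Im}(Q)$ consists of 3-tensors symmetric in both the first two and last two slots, hence fully symmetric. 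For such $T$ one unpacks \eqref{eq:alphas} to obtain $\sigma^{-1}\circ\alphal(T)=\alphar(T)$, so $(\alphar+\sigma^{-1}\circ\alphal)(T)=2\alphar(T)$; since $\alphar$ is a bimodule isomorphism, this vanishes only for $T=0$. Theorem \ref{thm:uniqueness} then yields uniqueness, and consequently $\nablar^G$ must agree with the complexification of $\nabla^g$, whence the restriction to $\Omega^{1}(M)\ox1_\C$ is the Riemannian connection. The main obstacle is the injectivity computation for $\alphar+\sigma^{-1}\circ\alphal$ on the fully symmetric subspace, which requires carefully unpacking the definitions \eqref{eq:alphas} in the presence of $\sigma$; the remaining ingredients are translations between the paper's conventions and classical Riemannian geometry.
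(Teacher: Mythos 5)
Your proposal is correct and follows the route the paper intends: this theorem is stated here without proof (it is recalled from \cite[Theorem 6.15]{MRLC}), and the natural argument is exactly to verify the hypotheses of Theorems \ref{thm:existence} and \ref{thm:uniqueness} for the manifold data and then invoke uniqueness to identify $\nablar^G$ with the complexified Riemannian connection. Your injectivity computation is sound: $\mathrm{Im}(\Pi)=\mathrm{Im}(P)\cap\mathrm{Im}(Q)$ is the fully symmetric $3$-tensors because $(12)$ and $(23)$ generate $S_3$, centrality is automatic over $C^\infty(M)$, and unwinding \eqref{eq:alphas} with $(\dee x^\mu)^\dag=-\dee x^\mu$ does give $\sigma^{-1}\circ\alphal(T)=\alphar(T)$ on symmetric $T$, so the sum is $2\alphar(T)$ with $\alphar$ an isomorphism. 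The one genuinely different (and economical) move is deducing $\dag$-concordance from the \emph{reverse} implication of Theorem \ref{thm:existence} applied to the classical Levi-Civita connection, rather than verifying the frame identity \eqref{eq:Bram's-condition} directly as in \cite{MRLC}; this is legitimate provided you note that $\dag$-concordance presupposes concordance, i.e.\ the splitting $T^3=(\mathrm{Im}P\cap\mathrm{Im}Q)\oplus(\mathrm{Im}(1-P)+\mathrm{Im}(1-Q))$, which for manifolds is the elementary $S_3$-decomposition of $V^{\ox 3}$ and is worth recording explicitly. The remaining items you defer --- that metric compatibility and classical torsion-freeness translate into the Hermitian condition and $(1-\Psi)\circ\nablar^g=-\d$ --- do hold, but the signs coming from the skew-adjointness of $[\slashed{D},x^\mu]$ should be tracked carefully in a full write-up.
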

Other examples to which this machinary applies are $\theta$-deformations of Riemannian manifolds \cite[Section 6]{MRLC}, as well as pseudo-Riemannian manifolds,  \cite[Examples 2.4--2.6]{MRLC}, and the standard Podle\'{s} sphere \cite{MRPods}.




\section{Curvature}
\label{sec:curv}

The definitions of curvature we use are the classical ones, and have been used in the algebraic context for decades. The books \cite{Landi} and  \cite{BMBook} serve as excellent sources for the background, examples and related topics.

\subsection{Curvature for module connections and spectral triples}
A second order differential structure $(\Omega^{1}_{\D}(\B),\dag,\Psi)$ has a second order exterior derivative $\d:\Omega^{1}_{\D}(\B)\to \Lambda^{2}_{\D}(\B)$ which allows us to use the usual algebraic definition of curvature for a module connection.
\begin{defn}
Let $(\Omega^{1}_{\D}(\B),\dag,\Psi)$ be a second order differential structure, $\X_\B$ a finite projective right $\B$-module and $\nablar^{\X}:\X\to \X\otimes_{\B}\Omega^{1}_{\D}(\B)$ a connection. The curvature of $\X$ is the map $R^{\nablar^\X}:\X\to \X\ox_\B\Lambda^2_\D(\B)$ defined by
\[
R^{\nablar^\X}(x)=1\ox(1-\Psi)\circ (\nablar^\X\ox 1+1\ox\d)\circ\nablar^\X(x)\in \X\ox_\B\Lambda^2_\D(\B),\qquad x\in\X.
\]
Similarly, for a connection $\nablal^\X$ on a left module ${}_\B\X$ we define
the curvature to be
\[
R^{\nablal^\X}(x)=(1-\Psi)\ox1\circ (1\ox\nablal^\X-\d\ox1)\circ\nablal^\X(x)\in \Lambda^2_\D(\B)\ox_\B\X,\qquad x\in\X.
\]
\end{defn}

The sign difference between the left and right curvatures is due to the fact that $\d$ satisfies a graded Leibniz rule while connections do not interact with the grading in such a way. For a pair of conjugate connections $\nablal=-\dag\circ\nablar\circ\dag$
on a $\dag$-bimodule $\X$ the curvatures are related via
\[
R^{\nablar}(x)^\dag=R^{\nablal}(x^\dag).
\]

The next lemma provides tools for computing curvature.

\begin{lemma}
\label{lem:frame-diff}
Let $(\Omega^{1}_{\D}(\B),\dag,\Psi,\pairing{\cdot}{\cdot})$ be an Hermitian differential structure,  $\X_\B$ a finite projective right $\B$-module
 and $v=(x_j)$ a  frame for $\X_\B$.
Any right connection $\nablar^\X$ can be written $\nablar^\X=\nablar^v+\alphar(A)$ where $\nablar^v$ is the Grassmann connection and $A\in \X\ox\Omega^1_\D\ox\X$ is given by
$A=\sum_j\nablar(x_j)\ox x_j^\dag$. Writing $A=\sum_{j,k}x_j\ox A^k_j\ox x_k^\dag$ we have
\[
\sum_k\pairing{x_j}{x_k}_\B\,A_k^\ell=A^\ell_j,\qquad \sum_kA^k_j\pairing{x_k}{x_\ell}_\B=A^\ell_j,\qquad \sum_{j,k}x_j\ox[\D,\pairing{x_j}{x_k}_\B]\ox x_k^\dag=0.
\]
\end{lemma}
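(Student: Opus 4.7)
The plan is to first establish the decomposition $\nablar^\X=\nablar^v+\alphar(A)$ by combining the right frame expansion with the Leibniz rule, then produce the components $A^k_j$ by a second frame expansion and verify the three identities. For any $x\in\X$, the frame identity $x=\sum_j x_j\pairing{x_j}{x}_\B$ and the Leibniz rule for $\nablar^\X$ give
\[
\nablar^\X(x)=\sum_j\nablar^\X(x_j)\pairing{x_j}{x}_\B+\sum_j x_j\ox[\D,\pairing{x_j}{x}_\B].
\]
The second sum is exactly $\nablar^v(x)$, while the first is $\alphar(A)(x)$ with $A=\sum_j\nablar^\X(x_j)\ox x_j^\dag$ by the defining formula \eqref{eq:alphas} of $\alphar$. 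Since $\alphar(A)(x_\ell)\in\X\ox_\B\Omega^1_\D$, expanding this element in the frame of the first slot gives $\alphar(A)(x_\ell)=\sum_j x_j\ox A^\ell_j$ with the canonical choice $A^\ell_j:=(\pairing{x_j}{\cdot}_\B\ox1)(\alphar(A)(x_\ell))$, producing the representation $A=\sum_{j,k}x_j\ox A^k_j\ox x_k^\dag$.

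Identity (1) is then immediate from the frame property in the first slot: pulling the scalar $\pairing{x_j}{x_k}_\B$ into the second argument of $\pairing{x_k}{\cdot}_\B$ and collapsing via $\sum_k x_k\pairing{x_k}{y}_\B=y$ reduces $\sum_k\pairing{x_j}{x_k}_\B A^\ell_k$ to $A^\ell_j$. Identity (2) uses that $\alphar(A)$ is right $\B$-linear, so
\[
\sum_k\alphar(A)(x_k)\pairing{x_k}{x_\ell}_\B=\alphar(A)\Big(\sum_k x_k\pairing{x_k}{x_\ell}_\B\Big)=\alphar(A)(x_\ell).
\]
Reading off the canonical frame components via $(\pairing{x_j}{\cdot}_\B\ox1)$ on both sides and invoking (1) yields $\sum_k A^k_j\pairing{x_k}{x_\ell}_\B=A^\ell_j$.

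Identity (3) is the core of the lemma. Start from the idempotent property $\sum_k\pairing{x_j}{x_k}_\B\pairing{x_k}{x_\ell}_\B=\pairing{x_j}{x_\ell}_\B$ of the frame projection and apply $[\D,\cdot]$ via the Leibniz rule. Tensor the resulting equality with $x_j\ox\cdot\ox x_\ell^\dag$ and sum over $j,\ell$; denoting the target sum by $S$, the left-hand side equals $S$. In the first Leibniz term, the scalar $\pairing{x_k}{x_\ell}_\B$ moves across the right tensor slot (since $\omega b\ox x=\omega\ox bx$ over $\B$) and the sum collapses via the $\dag$-version of the frame identity, $\sum_\ell\pairing{x_k}{x_\ell}_\B x_\ell^\dag=x_k^\dag$, reproducing $S$. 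In the second Leibniz term, $\pairing{x_j}{x_k}_\B$ moves across the left tensor slot and collapses via $\sum_j x_j\pairing{x_j}{x_k}_\B=x_k$, again reproducing $S$. Hence $S=2S$, which forces $S=0$.

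The decomposition and identities (1)--(2) are structural consequences of the frame formalism and the defining formula of $\alphar$; the real obstacle is (3), whose proof rests on the observation that both halves of the Leibniz rule, after tensoring, independently reduce to the same expression $S$ via two different frame collapses, so that $S$ must equal twice itself and therefore vanish.
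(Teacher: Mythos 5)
Your proof is correct. The decomposition $\nablar^\X=\nablar^v+\alphar(A)$ and the first two identities are obtained exactly as in the paper, by expanding against the frame and using right $\B$-linearity of $\alphar(A)$; your explicit choice $A^\ell_j=(\pairing{x_j}{\cdot}_\B\ox1)(\alphar(A)(x_\ell))$ is in fact the choice for which both component identities hold on the nose, which the paper leaves implicit. Where you genuinely diverge is the third identity. The paper derives it from the Leibniz rule of the connection itself: writing $x_j=\sum_k x_k\pairing{x_k}{x_j}_\B$ inside $\nablar$ forces $\sum_{j,k}x_k\ox[\D,\pairing{x_k}{x_j}_\B]\pairing{x_j}{x}_\B=0$ for all $x$, i.e.\ the associated endomorphism vanishes, and then one passes to the three-tensor via injectivity of $\alphar$. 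You instead apply the derivation $[\D,\cdot]$ to the idempotency relation $\sum_k\pairing{x_j}{x_k}_\B\pairing{x_k}{x_\ell}_\B=\pairing{x_j}{x_\ell}_\B$, tensor with $x_j\ox\cdot\ox x_\ell^\dag$, and observe that each of the two Leibniz terms collapses back to the target tensor $S$ (one via $\sum_\ell\pairing{x_k}{x_\ell}_\B x_\ell^\dag=x_k^\dag$, the other via $\sum_j x_j\pairing{x_j}{x_k}_\B=x_k$), giving $2S=S$. This buys you two small things: the argument needs no connection at all, only the frame and the derivation property of $[\D,\cdot]$, and it kills the three-tensor $S$ directly in $\X\ox_\B\Omega^1_\D\ox_\B\X$ rather than its image under $\alphar$. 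The paper's route is marginally shorter and reuses machinery already on the table. Both are valid.
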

\begin{proof}
The first two statements come from the frame relation
\[
\sum_{j,k}x_j\ox A^k_j\ox x_k^\dag=\sum_{j,k,p}x_p\ox\pairing{x_p}{x_j}_\B A_j^k\ox x_k^\dag=\sum_{j,k,p}x_j\ox A_j^k\pairing{x_k}{x_p}_\B\ox x_p^\dag.
\]
The third is similar, with
\begin{align*}
\sum_j\nablar(x_j)\pairing{x_j}{x}_\B&=\sum_{j,k}\nablar(x_k\pairing{x_k}{x_j}_\B)\pairing{x_j}{x}_\B\\
&=\sum_k\nablar(x_k)\pairing{x_k}{x}_\B+\sum_{j,k}x_k\ox[\D,\pairing{x_k}{x_j}_\B]\pairing{x_j}{x}_\B.\qedhere
\end{align*}
\end{proof}
What follows is essentially the classical calculation showing that $R^{\nablar^\X}$ is a two-form-valued endomorphism, and can be found in \cite{Landi,MRS}. 
\begin{prop}
\label{prop:module-curve}
Let $(\Omega^{1}_{\D}(\B),\dag,\Psi,\pairing{\cdot}{\cdot})$ be an Hermitian differential structure, $\X_{\B}$ a finitely generated projective module, $\nablar^\X$ a right connection. The curvature is a well-defined two-form-valued endomorphism $R^{\nablar^{\X}}\in\Hom^{*}_{\B}(\X,\X\otimes\Lambda^{2}_{\D})$. If $v=(x_j)$ is a frame for $\X$ and $\nablar^{X}=\nablar^{v}+\alphar(A)$ then
\begin{align*}
R^{\nablar^\X}(x)=1\ox(1-\Psi)&\Big(x_k\ox[\D,\pairing{x_k}{x_j}_\B]\ox[\D,\pairing{x_j}{x_p}_\B]\pairing{x_p}{x}_\B\\
& +x_l\ox A^k_l\ox A^j_k\pairing{x_j}{x}_\B+x_k\ox\d(A^j_k)\pairing{x_j}{x}_\B)\Big)
\end{align*}
Similarly the curvature of a left connection $\nablal^{\X}=\nablal^{v}+\alphal(A)$ is a well-defined two-form-valued endomorphism $R^{\nablal^{X}}\in \Hom^{*}_{\B}(\X,\Lambda^{2}_{\D}\otimes\X)$, and
\begin{align*}
R^{\nablal^\X}(x)=(1-\Psi)\ox1&\Big({}_\B\pairing{x}{x_p}[\D,{}_\B\pairing{x_p}{x_j}]\ox[\D,{}_\B\pairing{x_j}{x_k}]\ox x_k\\
&+{}_\B\pairing{x}{x_j}A_j^l\ox A_l^k\ox x_k-{}_\B\pairing{x}{x_j}\d(A_j^k)\ox x_k\Big).
\end{align*}
\end{prop}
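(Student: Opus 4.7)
The plan is in two parts: first verify right $\B$-linearity so that $R^{\nablar^\X}$ is a genuine endomorphism taking values in $\X\ox\Lambda^2_\D$; second derive the explicit frame formula by substituting $\nablar^\X=\nablar^v+\alphar(A)$ and organizing the resulting terms.

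For right $\B$-linearity I will compute $R^{\nablar^\X}(xa)$ using $\nablar^\X(xa)=\nablar^\X(x)a+x\ox[\D,a]$. Applying $\nablar^\X\ox 1+1\ox\d$ produces Leibniz corrections from two sources: the graded Leibniz rule $\d(\omega b)=\d(\omega)b-(1-\Psi)(\omega\ox[\D,b])$, which is verified directly from $\d=(1-\Psi)\widehat\pi_\D\delta\pi_\D^{-1}$ together with the signed Leibniz rule on universal forms; and the term $(\nablar^\X\ox 1+1\ox\d)(x\ox[\D,a])=\nablar^\X(x)\ox[\D,a]$, using $\d([\D,a])=0$. After applying $1\ox(1-\Psi)$ the two non-$a$-linear residues are each of the form $\pm 1\ox(1-\Psi)(\nablar^\X(x)\ox[\D,a])$ and cancel, delivering $R^{\nablar^\X}(xa)=R^{\nablar^\X}(x)a$.

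For the formula I substitute $\nablar^\X=\nablar^v+\alphar(A)$ in $(\nablar^\X\ox 1+1\ox\d)\nablar^\X(x)$ and expand into five blocks: Grassmann--Grassmann, $A$--Grassmann, Grassmann--$A$, $A$--$A$, and $1\ox\d$ applied to $\alphar(A)(x)$. The identity $\nablar^\X(x_j)=\sum_i x_i\ox[\D,\pairing{x_i}{x_j}_\B]+\sum_i x_i\ox A^j_i$ follows from Lemma \ref{lem:frame-diff}(2) and cleans the $A$-contribution. Splitting the $\d$-of-$A$ block via the Leibniz rule leaves $\sum_{j,k}x_j\ox\d(A^k_j)\pairing{x_k}{x}_\B$ plus a correction which, after $1\ox(1-\Psi)$, is exactly the negative of the $A$--Grassmann block; these cancel. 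The remaining Grassmann--$A$ block is identically zero: expand $A^k_j=\sum_p\pairing{x_j}{x_p}_\B A^k_p$ by Lemma \ref{lem:frame-diff}(1), move $\pairing{x_j}{x_p}_\B$ across the middle tensor, apply $\sum_j[\D,\pairing{x_i}{x_j}_\B]\pairing{x_j}{x_p}_\B=[\D,\pairing{x_i}{x_p}_\B]-\sum_j\pairing{x_i}{x_j}_\B[\D,\pairing{x_j}{x_p}_\B]$, and collapse the first-factor sum using the frame identity $\sum_i x_i\pairing{x_i}{x_j}_\B=x_j$; two relabellings of the same sum then cancel. The identical manipulation applied to $[\D,\pairing{x_j}{x}_\B]$ inside the Grassmann--Grassmann block rewrites it as $x_k\ox[\D,\pairing{x_k}{x_j}_\B]\ox[\D,\pairing{x_j}{x_p}_\B]\pairing{x_p}{x}_\B$, matching the first summand in the statement.

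The bookkeeping is the main obstacle: five index-heavy blocks must be reduced using repeated Leibniz rules (for $[\D,\cdot]$ and for $\d$) together with the frame identities of Lemma \ref{lem:frame-diff}, while keeping track of what survives after $1\ox(1-\Psi)$ and what is only an equality in $\Lambda^2_\D$ rather than in $T^2_\D$. The left-connection formula requires no separate computation: the conjugation identity $R^{\nablar}(x)^\dag=R^{\nablal}(x^\dag)$ noted above, together with $\Psi\circ\dag=\dag\circ\Psi$ and the compatibility of $\alphar,\alphal$ with $\dag$, translates each term of the right-sided formula into the corresponding left-sided term.
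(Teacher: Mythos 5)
Your proof of the right-connection formula is correct and follows essentially the same route as the paper: expand $(\nablar^\X\ox1+1\ox\d)\nablar^\X(x)$ into the Grassmann/connection-form blocks, cancel the graded-Leibniz correction from $\d$ against the $A$--Grassmann block, kill the Grassmann--$A$ block and rewrite the Grassmann--Grassmann block using the identities of Lemma \ref{lem:frame-diff} --- these are exactly the cancellations in the paper's displayed computation. Your explicit verification of right $\B$-linearity is a welcome addition (the paper delegates this to \cite{Landi,MRS}). The only point to flag is the left-connection formula: the paper obtains it by the mirror-image direct computation, whereas you propose to deduce it from $R^{\nablar}(x)^\dag=R^{\nablal}(x^\dag)$. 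That identity applies to a \emph{conjugate} pair on a $\dag$-bimodule, while the statement concerns an arbitrary left connection $\nablal^v+\alphal(A)$ on a module that need only be finitely generated projective; your shortcut works when $\X$ is a $\dag$-bimodule (every left connection is then conjugate to some right connection, and you must track how $A$ transforms under $\dag$ to see the sign flip on the $\d(A^k_j)$ term), but in the stated generality you still need the direct mirror computation.
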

\begin{proof}
We prove the result for right modules.
Fixing a frame $(x_j)$ of $\X_\B$, write
\[
\nablar^\X(x)=x_j\ox[\D,\pairing{x_j}{x}_\B]+x_j\ox A^k_j\pairing{x_k}{x}_\B,\qquad x\in\X_\B.
\]
Given $x\in \X_\B$ we use Lemma \ref{lem:frame-diff} repeatedly to find  
\begin{align*}
R^{\nablar^\X}&(x)=1\ox(1-\Psi)\circ \Big(\nablar^\X\ox 1+1\ox\d)(x_j\ox[\D,\pairing{x_j}{x}_\B]+x_k\ox A^j_k\pairing{x_j}{x}_\B\Big)\\
&=1\ox(1-\Psi)\Big(\nablar^\X(x_j)\ox[\D,\pairing{x_j}{x}_\B]+\nablar^\X(x_k)\ox A^j_k\pairing{x_j}{x}_\B+x_k\ox\d(A^j_k\pairing{x_j}{x}_\B)\Big)\\
&=1\ox(1-\Psi)\Big(x_k\ox[\D,\pairing{x_k}{x_j}_\B]\ox[\D,\pairing{x_j}{x}_\B]+x_k\ox A^l_k\pairing{x_l}{x_j}_\B\ox[\D,\pairing{x_j}{x}_\B]\\
&\qquad+x_l\ox[\D,\pairing{x_l}{x_k}_\B]\ox A^j_k\pairing{x_j}{x}_\B+x_l\ox A^m_l\pairing{x_m}{x_k}_\B\ox A^j_k\pairing{x_j}{x}_\B\\
&\qquad+x_k\ox\d(A^j_k)\pairing{x_j}{x}_\B)-x_k\ox A^j_k\ox[\D,\pairing{x_j}{x}_\B]\Big)\\
&=1\ox(1-\Psi)\Big(x_k\ox[\D,\pairing{x_k}{x_j}_\B]\ox[\D,\pairing{x_j}{x_p}_\B]\pairing{x_p}{x}_\B\\
&\qquad+x_l\ox[\D,\pairing{x_l}{x_k}_\B]\ox \pairing{x_k}{x_m}_\B A^j_m\pairing{x_j}{x}_\B+x_l\ox A^m_l\pairing{x_m}{x_k}_\B\ox A^j_k\pairing{x_j}{x}_\B\\
&\qquad+x_k\ox\d(A^j_k)\pairing{x_j}{x}_\B)\Big)\\
&=1\ox(1-\Psi)\Big(x_k\ox[\D,\pairing{x_k}{x_j}_\B]\ox[\D,\pairing{x_j}{x_p}_\B]\pairing{x_p}{x}_\B\\
&\qquad +x_l\ox A^k_l\ox A^j_k\pairing{x_j}{x}_\B+x_k\ox\d(A^j_k)\pairing{x_j}{x}_\B)\Big).
\end{align*}
The case of a left connection follows similarly.\end{proof}

The advantage of using a global frame for computing curvature, even classically, is that the topological contribution to the curvature is separated out in the Grassmann term 
\[
1\ox(1-\Psi)\Big(x_k\ox[\D,\pairing{x_k}{x_j}_\B]\ox[\D,\pairing{x_j}{x_p}_\B]\Big)\pairing{x_p}{x}_\B
\]
with the connection form  contributions ``$\d A+A\wedge A$'' being purely geometric.

For a compact Riemannian manifold $(M,g)$ equipped with a Dirac bundle $\slashed{S}\to M$ and associated spectral triple $(C^{\infty}(M), L^{2}(M,\slashed{S}),\slashed{D})$ we have $\Omega^{1}_{\slashed{D}}(C^{\infty}(M))\simeq \Omega^{1}(M)\otimes\mathbb{C}$ and $\Psi\!=$\,symmetrisation projection. It is well-known that this notion of connection coincides with the usual one in the case of a connection on a smooth Riemannian vector bundle $E\to M$. Consequently, the definition of curvature applied to such a connection also recovers the usual geometric curvature tensor. 

In view of Theorem \ref{thm:LC-class}, we can recover the Riemann tensor of the manifold $M$ by considering the curvature of the Levi-Civita connection. This motivates the following definition.
\begin{defn} Let $(\B,\H,\D)$ be a spectral triple admitting a $\dag$-concordant Hermitian differential structure $(\Omega^{1}_{\D}(\B), \dag,\Psi,\pairing{\cdot}{\cdot})$ and a braiding $\sigma:T^{2}_{\D}\to T^{2}_{\D}$ for which there exists a Hermitian torsion-free $\dag$ bimodule connection $(\nablar^{G},\sigma)$. The curvature tensor of $(\B,\H,\D)$ is then defined to be $R^{\nablar^{G}}$.
\end{defn}

\subsection{Ricci and scalar curvature}

The Ricci and scalar curvature are obtained from the full Riemann tensor by taking  traces in suitable pairs of variables. The analogue in our setting is the inner product with the line element 
$G\in T^{2}_{\D}(\B)$ introduced in Equation \eqref{eq:Gee}, again for suitable pairs of variables. Similar definitions appear in \cite[p574ff]{BMBook}.

For a manifold, we can choose a frame coming from orthonormalising local coordinates
\begin{equation}
\omega^{k}_\alpha=\sqrt{\varphi_\alpha}B^k_\mu\,\dee x^\mu_\alpha
\label{eq:mfld-coords}
\end{equation}
with the the help of a partition of unity $\varphi_\alpha$. 
Here we abuse notation by writing $\dee x^\mu$ for $[\D,x^\mu_\alpha]$ computed locally, $B^k_\mu \dee x^\mu_\alpha=e^k_\alpha$ is a local orthonormal frame,
and where for a self-adjoint or symmetric operator $\D$ we have
$(\dee x^\mu)^\dag=-\dee x^\mu$. 

\begin{prop}
\label{lem:ricci}
Let $(M,g)$ be a Riemannian manifold and set $\B=C^\infty(M)$.
Identifying tangent and cotangent bundles of the Riemannian manifold $(M,g)$, the curvature tensor is (locally) 
\[
R=\dee x^\mu \ox R_{\sigma\rho\mu}^{\ \ \ \ \nu}\dee x^\sigma\wedge \dee x^\rho\ox (\dee x^\nu)^\dag.
\]
The Ricci tensor is 
\[
{\rm Ric}={}_\B\pairing{R}{G}
\]
and the scalar curvature is
\[
r=\pairing{G}{{\rm Ric}}_\B.
\]
\end{prop}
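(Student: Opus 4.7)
The plan is to verify each assertion by a local computation, using Theorem \ref{thm:LC-class} to identify $\nablar^G$ with the classical Levi-Civita connection, the frame formula of Proposition \ref{prop:module-curve} to expand $R$, and the inner product pairings of Section \ref{sec:ass1} to carry out the contractions against the line element $G$ from \eqref{eq:Gee}.

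For the coordinate formula for $R$, I would cover $M$ by charts with partition of unity $\varphi_\alpha$ and local orthonormal coframes $e^k_\alpha=B^k_\mu\,\dee x^\mu_\alpha$, and apply Proposition \ref{prop:module-curve} to the global frame $\omega^k_\alpha=\sqrt{\varphi_\alpha}\,e^k_\alpha$ from \eqref{eq:mfld-coords}. On a single chart the partition-of-unity factors collapse through Lemma \ref{lem:frame-diff} together with $\sum_\alpha\varphi_\alpha=1$, so the computation reduces to that in a genuine orthonormal coframe $e^k$. Since $\Psi$ is symmetrization, $(1-\Psi)$ is antisymmetrization and $\d$ reduces to the usual exterior derivative of forms; the connection one-form $A$ is, by Theorem \ref{thm:LC-class}, the pullback of the Levi-Civita connection form in the coframe $e^k$. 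Thus the surviving contribution $\d A+A\wedge A$ is precisely the Cartan structure equation for the Riemann curvature 2-form, and conversion back to the coordinate basis via $B^k_\mu$ produces the stated expression $R=\dee x^\mu\ox R_{\sigma\rho\mu}^{\ \ \ \ \nu}\dee x^\sigma\wedge\dee x^\rho\ox(\dee x^\nu)^\dag$.

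For the Ricci and scalar curvatures, I would express $G=\sum_k e^k\ox(e^k)^\dag$ in coordinates as $G=g_{\mu\nu}\,\dee x^\mu\ox(\dee x^\nu)^\dag$ using the vielbein identity $\sum_k B^k_\mu B^k_\nu=g_{\mu\nu}$. The left inner product ${}_\B\pairing{R}{G}$ pairs $G$ against an appropriate pair of slots of $R$; unpacking this using $(\dee x^\mu)^\dag=-\dee x^\mu$ and $\pairing{\dee x^\mu}{\dee x^\nu}_\B=g^{\mu\nu}$ implements the classical Ricci contraction and yields $\mathrm{Ric}$ as a symmetric $(0,2)$-tensor on $M$ agreeing with the Ricci tensor of $g$. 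Applying $\pairing{G}{\cdot}_\B$ once more, via \eqref{eq:Riemann}, contracts the remaining two slots against $g^{\alpha\beta}$ and delivers the classical scalar curvature $r$.

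The main obstacle is the bookkeeping of signs and index positions. The involution $(\dee x^\mu)^\dag=-\dee x^\mu$ propagates minus signs through each contraction, and one must identify the exact pair of slots of $R$ that the pairing with $G$ engages, ensuring that this pair realises the classical Ricci trace rather than being annihilated by the antisymmetry of $R$ in its wedge indices. Once the contraction convention is pinned down and matched against the Cartan form, the coordinate expressions follow by routine local expansion in a single chart.
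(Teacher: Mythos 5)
Your proposal is correct and follows essentially the same route as the paper: the paper's proof also works over a single chart, takes the local coordinate formula for $R$ as already established (by the preceding identification of the noncommutative curvature of a connection with the classical curvature tensor), and then performs exactly the contractions you describe, expanding $\dee x^\sigma\wedge\dee x^\rho$ as $\tfrac12(\dee x^\sigma\ox\dee x^\rho-\dee x^\rho\ox\dee x^\sigma)$, using $(\dee x^\mu)^\dag=-\dee x^\mu$ and $G=g_{\alpha\beta}\dee x^\alpha\ox(\dee x^\beta)^\dag$ to get ${}_\B\pairing{R}{G}=\dee x^\mu\ox(\dee x^\sigma)^\dag R_{\sigma\rho\mu}^{\ \ \ \ \rho}$, and then applying $\pairing{G}{\cdot}_\B$. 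The only divergence is that you sketch a derivation of the coordinate formula for $R$ via the frame formula and the Cartan structure equation where the paper simply cites the known correspondence, and the slot-identification issue you flag (that the pairing contracts $R_{\sigma\rho\mu\nu}g^{\rho\nu}$ rather than a pair annihilated by antisymmetry) is exactly what the paper's displayed computation resolves.
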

\begin{proof}
We will work over a single chart.
Writing $\dee x^\sigma\wedge \dee x^\rho$ as $\frac{1}{2}(\dee x^\sigma\ox \dee x^\rho-\dee x^\rho\ox \dee x^\sigma)$ allows us to
compute the left inner product with the identity operator $G$. Locally $G=g_{\alpha\beta}\dee x^\alpha\ox(\dee x^\beta)^\dag$, 
so we find
\begin{align*}
{}_{\B}\pairing{R}{G}&=\frac{1}{2}\dee x^\mu \ox R_{\sigma\rho\mu}^{\ \ \ \ \ \nu}\dee x^\sigma {}_{C(M)}\pairing{\dee x^\rho\ox (\dee x^\nu)^\dag}{g_{\alpha\beta}\dee x^\alpha\ox (\dee x^\beta)^\dag}\\
&-\frac{1}{2}\dee x^\mu \ox R_{\sigma\rho\mu}^{\ \ \ \ \ \nu}\dee x^\rho {}_{C(M)}\pairing{\dee x^\sigma\ox (\dee x^\nu)^\dag}{g_{\alpha\beta}\dee x^\alpha\ox (\dee x^\beta)^\dag}\\
&=-\frac{1}{2}\dee x^\mu \ox R_{\sigma\rho\mu\nu}\dee x^\sigma g^{\beta\nu}g_{\alpha\beta}g^{\rho\alpha}
+\frac{1}{2}\dee x^\mu \ox R_{\sigma\rho\mu\nu}\dee x^\rho g_{\alpha\beta}g^{\nu\beta}g^{\sigma\alpha}\\
&=-\frac{1}{2}\dee x^\mu \ox R_{\sigma\rho\mu\nu}\dee x^\sigma g^{\rho\nu}
+\frac{1}{2}\dee x^\mu \ox R_{\sigma\rho\mu\nu}\dee x^\rho g^{\sigma\nu}\\
&=-\dee x^\mu \ox \dee x^\sigma R_{\sigma\rho\mu\nu} g^{\rho\nu}
=-\dee x^\mu \ox \dee x^\sigma R_{\sigma\rho\mu\nu}
=\dee x^\mu \ox (\dee x^\sigma)^\dag R_{\sigma\rho\mu}^{\ \ \ \ \rho}. 
\end{align*}
The scalar curvature is then the right inner product of the Ricci curvature with the identity operator,
\begin{align*}
\pairing{G}{{}_{\B}\pairing{R}{G}}_{\B}
&=\pairing{g_{\alpha\beta}\dee x^\alpha\ox (\dee x^\beta)^\dag }{\dee x^\mu \ox (\dee x^\sigma)^\dag}_{\B}R_{\sigma\rho\mu\nu} g^{\rho\nu}\\
&=R_{\sigma\rho\mu\nu} g^{\rho\nu}g_{\alpha\beta}g^{\sigma\beta}g^{\alpha\mu}\\
&=R_{\sigma\rho\mu\nu}g^{\rho\nu}g^{\sigma\mu}\\
&=R_{\sigma\rho\mu\nu}g^{\rho\nu}g^{\sigma\mu}.\qedhere
\end{align*}
\end{proof}

As a consequence of these computations, we see that we can
define the Ricci and scalar curvature for any connection on the one-forms of an Hermitian differential structure.

\begin{defn}
Let $(\Omega^{1}_{\D}(\B),\dag,\Psi,\pairing{\cdot}{\cdot})$ be a Hermitian differential structure and $\nablar$ a right connection on $\Omega^{1}_{\D}$ with curvature $R^{\nablar}\in \Omega^1_\D(\B)\ox\Lambda^2_\D(\B)\ox\Omega^1_\D(\B)$.
The Ricci curvature of $\nablar$ is
\[
{\rm Ric}^{\nablar}={}_\B\pairing{R^{\nablar}}{G}\in T^2_\D(\B)
\]
and the scalar curvature is
\[
r^{\nablar}=\pairing{G}{{\rm Ric}^{\nablar}}_\B.
\]
\end{defn}
These definitions mirror those of \cite{BMBook} and references therein, and agree when both apply.
We will compute these curvatures for $\theta$-deformations of compact manifolds in Section \ref{subsec:no-change}, and in \cite{MRPods} we will examine the curvature of the Podle\'{s} sphere.


\section{Weitzenbock formula}
\label{sec:Weitzenbock}

In this section we relate the second covariant derivative to the connection Laplacian. Under additional assumptions, mimicking the definition of Dirac bundles on manifolds, we compare the connection Laplacian to $\D^2$. 

Before introducing our definition of Dirac spectral triples, modelled on the definition for manifolds, we clarify the role of the flip map which is present even in the commutative case. These observations influence the general form of Weitzenbock formulae even for manifolds.

\subsection{Clifford connections and braiding}
\label{subsec:in-detail}

The definition of Clifford connection and Dirac bundle for Riemannian manifolds
as found in \cite[Chapter II, Section 5]{LM} tacitly makes use of 
commutativity in a number of ways. Here we clarify where commutativity is used and provide motivation for the appearance of the braiding in the definition of Dirac spectral triple (Definition \ref{def: Dirac-spectral-triple}).

In the setting of a Dirac bundle $\slashed{S}\to M$ on a Riemannian manifold $(M,g)$, we write $\X:=\Gamma(M,\slashed{S})$ for the central bimodule of sections of $\slashed{S}$. One of the requirements of a Dirac bundle is that module of one-forms $\Omega^{1}(M)$ acts as endomorphisms of $\X$. That is, we are given a $C^{\infty}(M)$-linear map $\Omega^{1}(M)\to \End(\X)$.

We say that a connection $\nabla^\X$ is a Clifford connection if
given a one form $\omega$, the Levi-Civita connection $\nabla^g$, and a section $x\in \X$, we have
\begin{equation}
\omega\ox x\mapsto \nabla^\X(\omega\cdot x)=\nabla^g(\omega)\cdot x+\omega\cdot\nabla^\X(x).
\label{eq:classy-Cliff}
\end{equation}
In order for the right hand side of \eqref{eq:classy-Cliff} to be well-defined 
on the balanced tensor product $\Omega^1\ox_{C^\infty(M)}\X$ (before letting the one-form part act) 
requires $\nabla^{g}$ to be a right connection and $\nabla^{\X}$ to be a left connection. Of course in the commutative case, any right connection can be turned into a left connection using the flip map $ \X\ox_{C^\infty(M)}\Omega^{1}\to \Omega^{1}\ox_{C^\infty(M)} \X$, $x\ox\omega\mapsto \omega\ox x$. 
 
Ensuring well-definedness forces us to work with a left and a right connection, but subsequently, care is required when properly defining the action of the endomorphism defined by the one form $\omega$ on $\Omega^{1}\ox_{C^{\infty}(M)} \X$. Since $C^{\infty}(M)$ commutes with $\End(\X)$ the operator $(1\ox\omega)(\eta\ox x):=\eta\ox\omega\cdot x$ is well-defined on $\Omega^{1}\ox_{C^{\infty}(M)} \X$. In the noncommutative setting, this is no longer true.

The issue can be overcome by using a braiding $\sigma:\Omega^{1}\ox_{C^\infty(M)}\Omega^{1}\to \Omega^{1}\ox_{C^\infty(M)}\Omega^{1}$, which in the commutative case would be the flip map. In that case
\[
(\sigma(\omega\ox\eta))\cdot x = \eta\ox\omega\cdot x=(1\ox\omega)(\eta\ox x),
\]
and in this equation the left hand side can be generalised by using a braiding, whereas the right hand side does not generally make sense. The classical Clifford connection condition can thus be rewritten 
in terms of left and right connections as
\begin{equation}
\nablal^{\X}(c(\omega)x)=(1\ox c)(\sigma\ox 1)(\omega\ox\nablal^{\X}(x)+\nablar^{G}(\omega)\ox x),
\label{eq:Cliff-flip}
\end{equation}
and in this form can be reinterpreted in the noncommutative context.

\subsection{Dirac spectral triples and the connection Laplacian}

We now introduce a class of spectral triples for which the Weitzenbock formula holds.  Given a left inner product module $\X$ and a positive functional $\phi:\B\to\C$, the Hilbert space $L^2(\X,\phi)$ is the completion of
$\X$ in the scalar product $\langle x,y\rangle:=\phi({}_\B\pairing{x}{y})$.
\begin{defn} 
\label{def: Dirac-spectral-triple}
Let $(\B,\H,\D)$ be a spectral triple equipped with a braided Hermitian differential structure $(\Omega^{1}_{\D}(\B),\dag,\Psi,\pairing{\cdot}{\cdot},\sigma)$. Then $(\B,\H,\D)$ is a \emph{Dirac spectral triple} over $(\Omega^{1}_{\D}(\B),\dag,\Psi,\pairing{\cdot}{\cdot},\sigma)$ 
if 
\begin{enumerate}
\item for $\omega,\eta\in\Omega^{1}_{\D}(\B)$ we have
\begin{equation}
\label{eq:Clifford}
(m\circ\Psi)(\rho\ox\eta)=e^{-\beta}m(G)\pairing{\rho^\dag}{\eta}_\B=-e^{-\beta}m(G)g(\rho\ox\eta); 
\end{equation}
\item there is left inner product module $\X$ over $\B$ and a positive functional $\phi:\B\to\mathbb{C}$ such that $\H=L^{2}(\X,\phi)$ and the natural map $c:\Omega^{1}_{\D}(\B)\otimes_{\B}L^{2}(\X,\phi)\to L^{2}(\X,\phi)$ restricts to a map $c:\Omega^{1}_{\D}(\B)\otimes_{\B}\X\to \X$;
\item There is a left connection $\nablal^{\X}:\X\to \Omega^{1}_{\D}(\B)\otimes_{\B}\X$ such that $\D=c\circ\nablal^{\X}:\X\to L^{2}(\X,\phi)$;

\item there is a Hermitian torsion free $\dag$-bimodule connection $(\nablar^{G},\sigma)$ on $\Omega^{1}_{\D}$ such that
\begin{align}
\D(\omega x)\!=c\circ\nablal^\X(c(\omega\ox x))&\!=c\circ (m\circ\sigma\ox 1)(\nablar^{G}\!(\omega)\!\ox x)+\omega\ox\nablal^\X(x))
\label{eq:compatible2}
\end{align}
\end{enumerate}
\end{defn}
The well-known order one condition for spectral triples
gives a sufficient condition for $\D$ to be of the form $c\circ\nablal^\X$,
\cite[Section 3]{LRV}. 
The compatibility of $\nablal^\X$ with $\nablar^{G}$
is the analogue of the `Clifford connection' condition on a 
Dirac bundle, \cite[Definition 5.2]{LM}. Condition 1. captures the 
essential feature of Clifford multiplication, namely that the product
is that of differential forms modulo the line element $G$.

As discussed subsection \ref{subsec:in-detail}, the definition of Dirac bundle for manifolds exploits
commutativity to ensure the one-forms act in the correct order in condition 4. The bimodule map $\sigma$ plays the role of the flip map to do the same job in the noncommutative context. 
\begin{rmk}
\label{bimodremark}
One could consider the Clifford connection condition \eqref{eq:compatible2} relative to an arbitrary right connection $\nablar^{\Omega^{1}}$. Computing $[\D,a]\omega x$ then gives that 
\[
m(\sigma\nablar^{\Omega^{1}}(a\omega))=[\D,a]\omega +a m(\sigma\nablar^{\Omega^{1}}(\omega)).
\]
Hence $\nablar^{\Omega^{1}}$ is forced to be a $\sigma$-bimodule connection modulo $\ker m$.
\end{rmk}

We will consider examples of Dirac spectral triples, such as $\theta$-deformations of classical Dirac bundles in the present paper and the Podle\'{s} sphere in \cite{MRPods}. 

Consider the left $\B$-module $\X$. The curvature of $\X$ is given by the covariant second derivative $(1-\Psi)\ox1\circ(1\ox\nablal^\X-\d\ox1)\circ\nablal^\X$. The existence of the connection $\nablar^{G}$ on $\Omega^{1}_{\D}(\B)$ allows us define a connection Laplacian,
via a second derivative of the form $(1\ox\nablal^\X+\nablar^{G}\ox1)\circ\nablal^\X$, combined with the analogue of a trace map on two-tensors. By \cite[Proposition 2.30]{MRLC}
$$
 \nablar^{G}\ox1+1\ox\nablal^\X:\Omega^1_\D(\B)\ox_\B\X\to
T^{2}_{\D}(\B)\ox_\B\X
$$ 
is well-defined. In this section we will show how to construct the connection Laplacian for Dirac spectral triples.  
\begin{defn}
\label{defn:laplace}
Let $(\B,H,\D)$ be a Dirac spectral triple with braided Hermitian differential structure $(\Omega^{1}_{\D}(\B),\dag,\Psi,\pairing{\cdot}{\cdot},\sigma)$, and $\nablal^{\X}:\X\to \Omega^{1}_{\D}\otimes_{\B}\X$ and $\nablar^{G}:\Omega^{1}_{\D}\to T^{2}_{\D}(\B)$ the associated connections. 
With $m:T^2_\D(\B)\to \Omega^2_\D(\B)$ the multiplication map, we define the  
{\em connection Laplacian of $\nablal^{\X}$ relative to $\nablar^{G}$} by
$$
\Delta^\X(x):=e^{-\beta}m(G)\pairing{G}{(\nablar^{G}\ox 1+1\ox \nablal^{\X})\circ \nablal^\X(x)}_\X\in\X.
$$
\end{defn}

Note that $\pairing{G}{(\nablar^{G}\ox 1+1\ox \nablal^{\X})\circ \nablal^\X(x)}_\X\in\X$ since $(\nablar^{G}\ox 1+1\ox \nablal^{\X})\circ \nablal^\X(x)\in T^{2}_{\D}(\B)\ox_{\B}\X$. Moreover Condition 2 of Definition \ref{def: Dirac-spectral-triple} guarantees that $m(G)$ maps $\X$ to itself, so that indeed $\Delta^{\X}:\X\to\X$.

For commutative manifolds and Dirac-type operators,
this definition specialises to the usual connection
Laplacian when $\X$ is the module of smooth sections of a vector bundle and $\nablar^{G}$ is the Levi-Civita connection on the cotangent bundle. The 
operator $e^{-\beta}G\bra{G}$ is the projection onto the span of $G$ in $T^2_\D$, and the next Lemma describes $e^{-\beta}m(G)$ for manifolds.
\begin{lemma}
\label{lem:dim-emm}
For a compact Riemannian manifold $(M,g)$ equipped with a Dirac bundle $\slashed{S}\to M$ and associated spectral triple $(C^{\infty}(M),L^{2}(M,\slashed{S}),\slashed{D})$, the operator $e^{-\beta}m(G)$ is the identity, and so $\Delta^\X$ is the usual connection Laplacian.
\end{lemma}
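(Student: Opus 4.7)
The plan is to reduce both identities to a local computation on an orthonormal frame and apply the Clifford relations. First I would use the partition-of-unity frame $\{\omega^k_\alpha = \sqrt{\varphi_\alpha}\,e^k_\alpha\}$ from Equation \eqref{eq:mfld-coords}, where $(e^k_\alpha)$ is a local orthonormal coframe on the chart supporting $\varphi_\alpha$. Since both inner products on $\Omega^1_{\slashed D}(C^\infty(M))$ are induced by $g$ and the frame is $g$-orthonormal, one has $\pairing{e^k_\alpha}{e^l_\alpha}_\B = \delta^{kl} = {}_\B\pairing{e^k_\alpha}{e^l_\alpha}$ on the chart.

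Next I would evaluate $e^\beta$ directly from \eqref{eq:ee-beta}:
\[
e^\beta = \sum_{k,\alpha}{}_\B\pairing{\omega^k_\alpha}{\omega^k_\alpha} = \sum_{k,\alpha}\varphi_\alpha\,{}_\B\pairing{e^k_\alpha}{e^k_\alpha} = \sum_{\alpha}\varphi_\alpha\sum_{k=1}^n 1 = n,
\]
where $n=\dim M$ and $\sum_\alpha\varphi_\alpha = 1$. For $m(G)$, recall that $[\slashed D,a]$ is Clifford multiplication by $\dee a$, so $\omega^\dag = -\omega$ on one-forms and $\omega\eta + \eta\omega = -2g(\omega,\eta)$. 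Then
\[
m(G) = \sum_{k,\alpha}\varphi_\alpha\,e^k_\alpha (e^k_\alpha)^\dag = -\sum_{k,\alpha}\varphi_\alpha (e^k_\alpha)^2 = \sum_{k,\alpha}\varphi_\alpha\,g(e^k_\alpha,e^k_\alpha) = n,
\]
using the Clifford square $(e^k_\alpha)^2 = -1$ on the orthonormal frame. Hence $e^{-\beta}m(G) = n/n = 1$, which is the first assertion. Centrality of $G$ and $e^\beta$ ensures these local identities patch consistently and equal the scalar $n$ globally.

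For the second claim, I would observe that with $e^{-\beta}m(G)=1$, Definition \ref{defn:laplace} reduces to
\[
\Delta^\X(x) = \pairing{G}{(\nablar^G\ox1 + 1\ox\nablal^\X)\circ\nablal^\X(x)}_\B.
\]
By Theorem \ref{thm:LC-class}, $\nablar^G$ restricts to the Levi-Civita connection on $\Omega^1(M)$, and $(\nablar^G\ox1 + 1\ox\nablal^\X)\circ\nablal^\X$ is the total second covariant derivative $(\nabla^\X)^2:\X\to T^2_{\slashed D}(C^\infty(M))\ox_\B\X$. Left pairing with $G = g_{\alpha\beta}\dee x^\alpha\ox(\dee x^\beta)^\dag$ against a two-tensor contracts its first two legs against $g^{\alpha\beta}$, exactly the metric trace that defines the classical connection Laplacian $-g^{\alpha\beta}(\nabla^\X_\alpha\nabla^\X_\beta - \Gamma^\gamma_{\alpha\beta}\nabla^\X_\gamma)$. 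A short local coordinate computation (using the frame $\dee x^\mu$ and verifying the sign matches the classical convention via $(\dee x^\mu)^\dag = -\dee x^\mu$) completes the identification.

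The main obstacle is keeping the sign conventions straight: the $\dag$-adjoint introduces a sign on one-forms that conspires with the Clifford minus signs to produce the positive dimension $n$, and the same sign bookkeeping has to be done in the contraction $\pairing{G}{\cdot}_\B$ to recognise the standard Laplacian. Everything else is a mechanical unwinding once the orthonormal-frame computation is in place.
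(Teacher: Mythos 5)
Your proof is correct and follows essentially the same route as the paper: a local computation of $m(G)$ via the Clifford relations giving $\dim(M)\,\mathrm{Id}$, the observation that $e^\beta=\dim(M)$, and the identification of the $G$-contraction of the second covariant derivative with the classical connection Laplacian. The only cosmetic difference is that you work in an orthonormalised partition-of-unity frame where the paper contracts $g_{\mu\nu}\gamma^\mu\ox\gamma^{\nu*}$ directly in coordinates; the sign bookkeeping you flag ($\omega^\dag=-\omega$ cancelling the Clifford minus) is exactly what makes both versions come out to $+\dim(M)$.
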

\begin{proof}
On a Riemannian manifold, the line element is $G=\sum_{\alpha,\mu,\nu}\varphi_\alpha g_{\mu\nu}\gamma^\mu\ox\gamma^{\nu*}$. This is expressed using a covering of the manifold by charts $U_\alpha$ with partition of unity $\varphi_\alpha$ and coordinates $x^\mu$ whose differentials are represented by $\gamma({\rm d}x^\mu)=:\gamma^\mu$ (see Equation \eqref{eq:mfld-coords}). Computing locally (as we may) using the Clifford relations gives
\[
m(g_{\mu\nu}\gamma^\mu\ox\gamma^{\nu*})=g_{\mu\nu}g^{\mu\nu}{\rm Id}=\dim(M){\rm Id}
\]
and $e^{\beta}=\dim(M)$. The formula for $\Delta^\X$ reduces to the classical formula for the connection Laplacian, and so we are done.
\end{proof}

\begin{rmk}
The choice of inner product on $\Omega^1_{\slashed{D}}$ is critical to
Lemma \ref{lem:dim-emm}. The Clifford elements $\gamma^\mu$ encode the metric $g$ used to define $\slashed{D}$, and if we take a different Riemannian metric $h$ on $\Omega^1_{\slashed{D}}$ we find $m(G)=h_{\mu\nu}g^{\mu\nu}{\rm Id}$.
\end{rmk}

\subsection{The Weitzenbock formula for Dirac spectral triples}

Given a Dirac spectral triple $(\B,L^2(\X,\phi),\D)$,
we can compare the action of $\D^2$ on $L^2(\X,\phi)$ with 
the connection Laplacian.

\begin{thm}
\label{thm:Weitz1} 
Let $(\B,L^2(\X,\phi),\D)$ be a Dirac spectral triple relative to $(\Omega^{1}_{\D}(\B),\dag,\Psi,\pairing{\cdot}{\cdot},\sigma)$ and $\Delta^{\X}$ the connection Laplacian of the left connection $\nablal^\X$.
If $m\circ\sigma\circ\Psi=m\circ\Psi$ and $\Psi(G)=G$ then
\begin{align}
\label{eq:Weitzy}
\D^2(x)
&=\Delta^\X(x)+c\circ (m\circ\sigma\ox 1)\big(R^{\nablal^\X}(x)\big).
\end{align}

\end{thm}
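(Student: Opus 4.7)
The plan is to iterate $\D=c\circ\nablal^{\X}$ from condition 3 and apply the Clifford compatibility \eqref{eq:compatible2} termwise, then decompose the result using the idempotent $\Psi\otimes 1$ on $T^{2}_{\D}(\B)\otimes_{\B}\X$. Writing $\nablal^{\X}(x)=\sum_{i}\omega_{i}\otimes y_{i}$ and passing each $c(\omega_{i}\otimes y_{i})$ through \eqref{eq:compatible2}, we arrive at
\begin{equation*}
\D^{2}(x)=c\circ(m\circ\sigma\otimes 1)\circ\mathcal{N}(x),\qquad\mathcal{N}(x):=(\nablar^{G}\otimes 1+1\otimes\nablal^{\X})\nablal^{\X}(x)\in T^{2}_{\D}(\B)\otimes_{\B}\X.
\end{equation*}
A short Leibniz-rule check verifies that $\mathcal{N}$ is well-defined on the balanced tensor product, so the splitting $\mathcal{N}(x)=(\Psi\otimes 1)\mathcal{N}(x)+((1-\Psi)\otimes 1)\mathcal{N}(x)$ is meaningful.

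For the antisymmetric piece, the torsion-free identity $(1-\Psi)\circ\nablar^{G}=-\d$ (recalled in Section 2) together with $(1-\Psi)\circ\d=\d$ (since $\d$ already lands in $(1-\Psi)T^{2}_{\D}(\B)$) gives
\begin{equation*}
((1-\Psi)\otimes 1)\mathcal{N}(x)=((1-\Psi)\otimes 1)(1\otimes\nablal^{\X}-\d\otimes 1)\nablal^{\X}(x)=R^{\nablal^{\X}}(x),
\end{equation*}
so applying $c\circ(m\circ\sigma\otimes 1)$ yields the curvature contribution on the right-hand side of \eqref{eq:Weitzy}.

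For the symmetric piece, the hypothesis $\sigma\Psi=\Psi$ collapses $(m\circ\sigma\otimes 1)(\Psi\otimes 1)$ to $(m\circ\Psi\otimes 1)$, and condition 1 evaluates $(m\circ\Psi)(\rho\otimes\eta)=e^{-\beta}m(G)\pairing{\rho^{\dag}}{\eta}_{\B}$ componentwise on $\mathcal{N}(x)$. Using \eqref{eq:Riemann} to identify $\pairing{\rho^{\dag}}{\eta}_{\B}=\pairing{G}{\rho\otimes\eta}_{\B}$, together with centrality of $e^{-\beta}$ and the preservation of $\X$ by $m(G)$ from condition 2, the sum rearranges to
\begin{equation*}
c\circ(m\circ\sigma\otimes 1)(\Psi\otimes 1)\mathcal{N}(x)=e^{-\beta}m(G)\cdot\pairing{G}{\mathcal{N}(x)}_{\X}=\Delta^{\X}(x).
\end{equation*}
The assumption $\Psi(G)=G$, combined with the self-adjointness $\Psi=\Psi^{*}$, ensures that $\pairing{G}{\cdot}_{\B}$ vanishes on $(1-\Psi)T^{2}_{\D}(\B)$, so this symmetric contribution faithfully represents the full Laplacian of Definition \ref{defn:laplace}; combining the two pieces yields \eqref{eq:Weitzy}.

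The main obstacle is organizational rather than computational: condition 4 is phrased only for pure tensors $\omega\otimes x$, so one must verify that the compatibility extends by linearity to the image of $\nablal^{\X}$ and descends to the balanced tensor product $\Omega^{1}_{\D}(\B)\otimes_{\B}\X$. The braiding $\sigma$ is precisely the bimodule map that aligns the right-module structure of $\nablar^{G}$ with the left-module structure of $\nablal^{\X}$, and its compatibility $\sigma\Psi=\Psi$ with the junk projection is the algebraic hinge of the argument, forcing the symmetric part of $\mathcal{N}(x)$ to collapse onto a multiple of the line element $G$.
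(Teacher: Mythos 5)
Your argument is correct and follows the paper's proof essentially verbatim: iterate $\D=c\circ\nablal^{\X}$ through the Clifford compatibility \eqref{eq:compatible2} to obtain $c\circ(m\circ\sigma\ox 1)$ applied to $(\nablar^{G}\ox 1+1\ox\nablal^{\X})\circ\nablal^{\X}(x)$, split by $\Psi\ox 1$, kill the cross term $((1-\Psi)(\nablar^{G}+\d))\ox 1$ by torsion-freeness to leave $R^{\nablal^{\X}}(x)$, and collapse the symmetric part onto $e^{-\beta}m(G)\pairing{G}{\cdot}$ via $\sigma\Psi=\Psi$, condition 1, and $\Psi(G)=G$. The hypotheses are deployed in exactly the same places as in the paper, so there is nothing to add.
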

\begin{proof}
We start our comparison of $\D^2$ and $\Delta^\X$ by 
using Equation \eqref{eq:compatible2} to write 
\begin{align}
\nonumber\D^2(x)&
=c\circ (m\circ\sigma\ox 1)(\nablar^{G}\ox 1+1\ox\nablal^{\X})\circ\nablal^\X(x)))\\
\label{eq:laplaceterm}&=c\circ (m\circ\sigma\ox 1)\Big((\Psi\ox1)(\nablar^{G}\ox 1+1\ox\nablal^{\X})\circ\nablal^\X(x)\Big)\\
\label{eq: htfweitzy}&\qquad+c\circ (m\circ\sigma\ox 1)\Big(R^{\nablal^\X}(x)+(1-\Psi)(\nablar^{G}+\d)\ox1\circ\nablal^\X(x))\Big)\\
\nonumber&=c\circ (m\circ\sigma\ox 1)\Big((\Psi\ox1)(\nablar^{G}\ox 1+1\ox\nablal^{\X})\circ\nablal^\X(x)\Big)+c\circ (m\circ\sigma\ox 1)\Big(R^{\nablal^\X}(x)\Big).
\end{align}
We will identify the term \eqref{eq:laplaceterm} with the connection Laplacian $\Delta^{\X}$. By \eqref{eq:Clifford} we have
\[
m(\omega\ox\rho)=e^{-\beta}m(G)\pairing{\omega^\dag}{\rho}_\B+m(1-\Psi)(\omega\ox\rho).
\]
So if $\Psi(\omega\ox\rho)=\omega\ox\rho$ then
\[
c\circ (m\ox 1)(\omega\ox\rho\ox x)=e^{-\beta}m(G)\pairing{\omega^\dag}{\rho}_\B x=e^{-\beta}m(G)\pairing{G}{\omega\ox\rho}_\B x.
\]
Since $m\circ\sigma\circ\Psi=m\circ\Psi$ and $\Psi(G)=G$ we have  
\begin{align*}
c\circ((m\circ\sigma\circ\Psi)\ox1)&(\nablar^{G}\ox 1+1\ox\nablal^{\X})\circ\nablal^\X(x))\\&=c\circ((m\circ\Psi)\ox1)(\nablar^{G}\ox 1+1\ox\nablal^{\X})\circ\nablal^\X(x))\\
&=e^{-\beta}m(G)\pairing{G}{(\Psi\ox 1)(\nablar^{G}\ox 1+1\ox\nablal^{\X})\circ\nablal^\X(x)}\\
&=e^{-\beta}m(G)\pairing{G}{(\nablar^{G}\ox 1+1\ox\nablal^{\X})\circ\nablal^\X(x)}=\Delta^\X(x)
\end{align*}
which completes the proof.
\end{proof}
\begin{rmk}
\label{htfweitzyremark}
As in Remark \ref{bimodremark}, one could consider the Clifford connection condition \eqref{eq:compatible2} relative to an arbitrary right connection $\nablar^{\Omega^{1}}$. Equation \eqref{eq: htfweitzy} can then be derived and we see that in order for the Weitzenbock formula to hold, the connection $\nablar^{\Omega^{1}}$ is forced to be Hermitian and torsion free modulo $\ker m$.
\end{rmk}
\subsection{Divergence condition for positivity of the Laplacian}
In geometric calculations, the fact that the connection Laplacian is a positive Hilbert space operator is essential. In this subsection we derive an abstract condition guaranteeing positivity, corresponding to the well-known fact that the integral of the divergence of a vector field vanishes. Although we will not use this condition in the present paper, we record it for completeness.
We first observe the following.

\begin{lemma}
\label{lem:adjoint-conn}
Suppose that $(\Omega^{1}_{\D}(\B),\dag,\Psi,\pairing{\cdot}{\cdot})$ a Hermitian differential structure and $\X$ a left $\B$-inner product module.
For $\nablal^{\X}:\X\to \Omega^1_\D(\B)\ox_\B \X$ an Hermitian left connection
and $\nablar^{\Omega^{1}}:\Omega^1_\D(\B)\to T^{2}_{\D}(\B)$ a right connection, we have
\[
\pairing{G}{{}_\B\pairing{\nablal^\X x}{\nablal^\X y}}_\B={}_\B\pairing{{}_{T^2}\pairing{(\nablar^{\Omega^{1}}\ox1+1\ox\nablal^\X)\circ\nablal^\X(x)}{y}-\nablar^{\Omega^{1}}({}_{\Omega^1}\pairing{\nablal^\X x}{y})}{G}.
\]
\end{lemma}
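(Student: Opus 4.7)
The identity is a noncommutative integration-by-parts formula, specialising in the commutative case to $\langle\nabla s,\nabla t\rangle_g = \langle\nabla^{*}\nabla s,t\rangle_g - \operatorname{div}\langle\nabla s,t\rangle_g$, with contraction against the line element $G$ implementing the metric trace. The plan is a short algebraic computation: I will show that the argument of ${}_\B\pairing{\cdot}{G}$ on the right-hand side collapses to the $T^2$-valued inner product ${}_\B\pairing{\nablal^\X x}{\nablal^\X y}$, after which the two sides agree via the identification $\pairing{G}{\cdot}_\B \leftrightarrow {}_\B\pairing{\cdot}{G}$, which holds on $T^2_\D(\B)$ by centrality and $\dag$-symmetry of $G$ combined with the relation ${}_\B\pairing{\omega}{\rho} = \pairing{\omega^\dag}{\rho^\dag}_\B$.

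Write $\nablal^\X x = \omega_i \ox \xi_i$ (finite sum). Applying $\nablar^{\Omega^{1}}\ox 1 + 1 \ox \nablal^\X$ and contracting the rightmost $\X$-factor against $y$ yields
\[
{}_{T^2}\pairing{(\nablar^{\Omega^{1}}\ox 1 + 1\ox\nablal^\X)\nablal^\X x}{y} = \nablar^{\Omega^{1}}(\omega_i)\cdot{}_\B\pairing{\xi_i}{y} + \omega_i\ox {}_{\Omega^1}\pairing{\nablal^\X(\xi_i)}{y}.
\]
Independently, the right Leibniz rule $\nablar^{\Omega^{1}}(\omega\cdot b) = \nablar^{\Omega^{1}}(\omega)\cdot b + \omega\ox[\D,b]$ applied to ${}_{\Omega^1}\pairing{\nablal^\X x}{y} = \omega_i\cdot{}_\B\pairing{\xi_i}{y}$ gives
\[
\nablar^{\Omega^{1}}\bigl({}_{\Omega^1}\pairing{\nablal^\X x}{y}\bigr) = \nablar^{\Omega^{1}}(\omega_i)\cdot{}_\B\pairing{\xi_i}{y} + \omega_i\ox[\D,{}_\B\pairing{\xi_i}{y}].
\]
Subtracting, the ``Christoffel'' terms $\nablar^{\Omega^{1}}(\omega_i)\cdot{}_\B\pairing{\xi_i}{y}$ cancel, notably without any torsion or Hermiticity hypothesis on $\nablar^{\Omega^{1}}$, leaving the tensor $\omega_i\ox\bigl({}_{\Omega^1}\pairing{\nablal^\X \xi_i}{y} - [\D,{}_\B\pairing{\xi_i}{y}]\bigr)$.

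Now invoke the Hermitian condition for $\nablal^\X$, namely ${}_{\Omega^1}\pairing{\nablal^\X \xi_i}{y} - {}_{\Omega^1}\pairing{\xi_i}{\nablal^\X y} = [\D,{}_\B\pairing{\xi_i}{y}]$, which rewrites the bracketed expression as ${}_{\Omega^1}\pairing{\xi_i}{\nablal^\X y}$. Unpacking the definition of the $T^2$-valued left inner product on $\Omega^1_\D(\B)\ox_\B \X$, the resulting tensor $\omega_i\ox {}_{\Omega^1}\pairing{\xi_i}{\nablal^\X y}$ is precisely ${}_\B\pairing{\nablal^\X x}{\nablal^\X y}$. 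Taking the left inner product with $G$ and converting to $\pairing{G}{\cdot}_\B$ using $G^\dag = G$ together with ${}_\B\pairing{\omega}{\rho} = \pairing{\omega^\dag}{\rho^\dag}_\B$ completes the proof.

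The main obstacle is purely bookkeeping: one must fix and track the conventions for ${}_\B\pairing{\cdot}{\cdot}$ on the tensor products $\Omega^1_\D(\B)\ox_\B\X$ and $T^2_\D(\B)\ox_\B\X$ (value space, side of the $\B$-action, placement of $\dag$) consistently with those in \cite{MRLC}, and verify that $G^\dag = G$ (which follows termwise since $(\omega_j\ox\omega_j^\dag)^\dag = \omega_j\ox\omega_j^\dag$). Once those choices are pinned down, the calculation is essentially one application of the right Leibniz rule followed by one application of the Hermitian condition.
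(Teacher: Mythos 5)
Your proposal is correct and follows essentially the same route as the paper: expand $\nablal^\X x$ as a sum $\omega_i\ox\xi_i$, apply the Leibniz rule for $\nablar^{\Omega^1}$ so the $\nablar^{\Omega^1}(\omega_i)$ terms combine into the divergence term, use the Hermitian condition for $\nablal^\X$ to cancel the $[\D,{}_\B\pairing{\xi_i}{y}]$ contributions, and finish with the identity ${}_\B\pairing{\rho\ox\eta}{G}=\pairing{G}{\rho\ox\eta}_\B$. The paper's proof is the same computation written in Sweedler notation, just organised with the Leibniz substitution performed on the second-derivative term rather than on the divergence term.
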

\begin{proof}
For $x\in \X$ we write $\nablal^\X(x)=\omega_{(0)}\ox x_{(1)}$ as a Sweedler sum. Then we use the Leibniz rule for $\nablar^\Omega$ and Hermitian property for $\nablal^\X$ to obtain
\begin{align*}
&{}_\B\pairing{{}_{T^2}\pairing{(\nablar^{\Omega^1}\ox1+1\ox\nablal^\X)(\omega_{(0)}\ox x_{(1)})}{y}}{G}\\
&\qquad={}_\B\pairing{\nablar^{\Omega^{1}}(\omega_{(0)}){}_\B\pairing{x_{(1)}}{y}}{G}+{}_\B\pairing{\omega_{(0)}\ox{}_{\Omega^1}\pairing{\nablal^\X x_{(1)}}{y}}{G}\\
&\qquad={}_\B\pairing{\nablar^{\Omega^{1}}(\omega_{(0)}{}_\B\pairing{x_{(1)}}{y})}{G}-{}_\B\pairing{\omega_{(0)}\ox[\D,{}_\B\pairing{x_{(1)}}{y}]}{G}\\
&\qquad\qquad+{}_\B\pairing{\omega_{(0)}\ox{}_\B\pairing{x_{(1)}}{\nablal^\X y}}{G}+{}_\B\pairing{\omega_{(0)}\ox[\D,{}_\B\pairing{x_{(1)}}{y}]}{G}\\
&\qquad={}_\B\pairing{\nablar^{\Omega^{1}}(\omega_{(0)}{}_\B\pairing{x_{(1)}}{y})}{G}+{}_\B\pairing{{}_{T^2}\pairing{\nablal^\X x}{\nablal^\X y}}{G}.
\end{align*}
The statement now follows by observing that for any two-tensor $\rho\ox \eta\in \Omega^1_\D(\B)^{\ox2}$ we have
\[
{}_\B\pairing{\rho\ox\eta}{G}={}_\B\pairing{\rho}{\eta^\dag}=\pairing{\rho^\dag}{\eta}_\B=\pairing{G}{\rho\ox\eta}_\B.
\qedhere
\]
\end{proof}
For a Dirac spectral triple $(\B,L^{2}(\X,\phi),\D)$, $\Omega^1_\D(\B)\ox_\B\H\cong L^2(\Omega^1_\D(\B)\ox_\B \X,\phi)$ with inner product
\[
\langle \rho\ox x,\eta\ox y\rangle
=\phi({}_\B\pairing{\rho\ox x}{\eta\ox y})
=\phi({}_\B\pairing{\rho\,{}_\B\pairing{x}{y}}{\eta})
=\phi({}_\B\pairing{\rho\,{}_\B\pairing{x}{y}\ox \eta^\dag}{G}).
\]

Recognising ${}_\B\pairing{\nablar^{\Omega^{1}}({}_\B\pairing{\nablal^\X x}{y}^\X)}{G}$ as a divergence term, the centrality of $e^{-\beta}m(G)$ gives us essentially the classical argument for the positivity of the connection Laplacian.

\begin{corl}
\label{cor:for-pos-curv}
Let $(\B,L^{2}(\X,\phi),\D)$ be a Dirac spectral triple over the braided Hermitian differential structure $(\Omega^{1}_{\D}(\B),\dag,\Psi,\pairing{\cdot}{\cdot},\sigma)$. If $\phi({}_\B\pairing{\nablar^G(\omega_{(0)}{}_\B\pairing{x_{(1)}}{x}^{\X})}{G})=0$ then 
\[
\phi((\langle \Delta^\X(x),x\rangle_\B)=\phi\big((e^{-\beta}m(G))^{1/2}\pairing{\nablal^{\X}(x)}{\nablal^{\X}(x)}_\B(e^{-\beta}m(G))^{1/2}\big)\geq 0.
\]
\end{corl}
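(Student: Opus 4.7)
The plan is to combine the definition of $\Delta^\X$ with Lemma \ref{lem:adjoint-conn} applied at $y=x$, then exploit the divergence hypothesis and the positivity of $e^{-\beta}m(G)$ and of $\phi$. Throughout I write $\nablal^\X(x)=\omega_{(0)}\ox x_{(1)}$ in Sweedler notation. First I would unpack ${}_\B\pairing{\Delta^\X(x)}{x}$ via Definition \ref{defn:laplace}. Centrality of $e^{-\beta}m(G)\in\B$ together with the elementary identity $\pairing{G}{(\rho\ox\tau)\cdot b}_\B=\pairing{G}{\rho\ox\tau}_\B\,b$ for $b\in\B$ lets us migrate scalar factors and yields
\[
{}_\B\pairing{\Delta^\X(x)}{x}=e^{-\beta}m(G)\,\pairing{G}{{}_{T^2}\pairing{(\nablar^G\ox1+1\ox\nablal^\X)\circ\nablal^\X(x)}{x}}_\B.
\]

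Next, Lemma \ref{lem:adjoint-conn} with $y=x$ and $\nablar^{\Omega^1}=\nablar^G$, combined with the identity ${}_\B\pairing{\rho\ox\eta}{G}=\pairing{G}{\rho\ox\eta}_\B$ recorded at the end of its proof, rewrites the pairing on the right as
\begin{align*}
\pairing{G}{{}_{T^2}\pairing{(\nablar^G\ox1+1\ox\nablal^\X)\circ\nablal^\X(x)}{x}}_\B&=\pairing{G}{{}_\B\pairing{\nablal^\X x}{\nablal^\X x}}_\B\\
&\quad+{}_\B\pairing{\nablar^G(\omega_{(0)}\,{}_\B\pairing{x_{(1)}}{x})}{G}.
\end{align*}
The second term is precisely the divergence expression appearing in the hypothesis, so applying $\phi$ annihilates it and leaves $\phi({}_\B\pairing{\Delta^\X(x)}{x})=\phi\bigl(e^{-\beta}m(G)\,\pairing{G}{{}_\B\pairing{\nablal^\X x}{\nablal^\X x}}_\B\bigr)$.

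To finish I would identify $\pairing{G}{{}_\B\pairing{\nablal^\X x}{\nablal^\X x}}_\B$ with the canonical right $\B$-valued inner product $\pairing{\nablal^\X x}{\nablal^\X x}_\B$ on $\Omega^1_\D(\B)\ox_\B\X$; expanding in Sweedler notation and using ${}_\B\pairing{\rho\ox\tau}{G}=\pairing{\rho^\dag}{\tau}_\B$ presents this as a manifestly positive element of $\B$. Centrality and positivity of $e^{-\beta}m(G)$ then permit the symmetric factorisation $(e^{-\beta}m(G))^{1/2}\pairing{\nablal^\X x}{\nablal^\X x}_\B(e^{-\beta}m(G))^{1/2}$ by commuting one square root across, and positivity of $\phi$ on positive elements of $\B$ delivers the claimed equality and inequality.

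The main obstacle is purely bookkeeping: several distinct $\B$- and $T^2$-valued pairings live on $\Omega^1_\D(\B)\ox_\B\X$ and $T^2_\D(\B)$, and the crux of the argument is verifying that contraction with the line element $G$ converts the bimodule quantity ${}_\B\pairing{\nablal^\X x}{\nablal^\X x}$ into the honest scalar inner product $\pairing{\nablal^\X x}{\nablal^\X x}_\B$. Once that identification and the centrality of $e^{-\beta}m(G)$ are in hand, the remainder is the standard integration-by-parts proof that a connection Laplacian is positive modulo a vanishing divergence.
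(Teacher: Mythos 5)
Your argument is correct and is precisely the route the paper indicates for Corollary \ref{cor:for-pos-curv}: unpack Definition \ref{defn:laplace}, apply Lemma \ref{lem:adjoint-conn} at $y=x$ together with the identity ${}_\B\pairing{\rho\ox\eta}{G}=\pairing{G}{\rho\ox\eta}_\B$, discard the divergence term, identify the remaining contraction with $\pairing{\nablal^\X x}{\nablal^\X x}_\B$, and conclude by centrality and positivity of $e^{-\beta}m(G)$ and positivity of $\phi$. The one point to watch is that in your computation the divergence term is multiplied by the central positive factor $e^{-\beta}m(G)$ before $\phi$ is applied, so strictly you need $\phi\big(e^{-\beta}m(G)\,{}_\B\pairing{\nablar^G(\omega_{(0)}{}_\B\pairing{x_{(1)}}{x})}{G}\big)=0$ rather than the hypothesis as literally stated --- an imprecision the paper's own formulation shares and which is vacuous whenever $e^{-\beta}m(G)$ is the identity, as in the manifold case.
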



\section{Connections and curvature for $\theta$-deformations}
\label{subsec:no-change}
\subsection{Background and notation}

Let $(M,g)$ be a compact Riemannian manifold equipped with a Dirac bundle $\slashed{S}\to M$ and $(C^\infty(M),L^2(M,\slashed{S}),\slashed{D})$ the associated Dirac spectral triple (in the sense of Definition \ref{def: Dirac-spectral-triple}). 

The space of $1$-forms $\Omega^{1}_{\slashed{D}}(M)\simeq \Omega^{1}\otimes \mathbb{C}$ acts via the Clifford action on $L^2(M,\slashed{S})$, and so carries a $\dag$-operation induced by operator adjoint $T\mapsto T^*$, as well as an inner product $\pairing{\cdot}{\cdot}_{g}$ induced by the Riemannian metric $g$. Moreover, the standard flip map $\sigma: T^{2}_{\slashed{D}}(M)\to T^{2}_{\slashed{D}}(M)$ gives the junk projection  $\Psi:=\frac{1+\sigma}{2}$ and $(\Omega^{1}_{\slashed{D}}(M),{}^*,\Psi,\pairing{\cdot}{\cdot}_{g})$ is a Hermitian differential structure. We briefly recall the necessary ingredients to deform this differential structure and refer to \cite[Section 6.1]{MRLC} for details and proofs.

Given a smooth group homomorphism $\alpha:\mathbb{T}^{2}\to \textnormal{Isom}(M,g)$, we obtain a unitary representation $U:\mathbb{T}^2\to\mathbb{B}(L^2(M,\slashed{S}))$ commuting with $\slashed{D}$ and 
such that ${\rm Ad}_U$ restricts to a group of $*$-automorphisms of $C^\infty(M)$. The representation $U$ is necessarily of the form $U(s)=e^{is_{1}p_{1}+is_{2}p_{2}}$ where the $p_{i}$ are the self-adjoint generators of the one-parameter groups associated to the coordinates $s_1,s_2$ of $\mathbb{T}^{2}$. The $*$-algebra of smooth vectors $C_{\alpha}^{\infty}(L^{2}(M,\slashed{S}))\subset\mathbb{B}(L^2(M,\slashed{S}))$ consists of elements $T$ that can be written as a norm convergent series
\[
T=\sum_{(n_{1},n_{2})\in\mathbb{Z}^{2}}T_{n_1,n_2},
\]
where the family of homogeneous components $T_{(n_1,n_2)}$ is of rapid decay.

We choose $\lambda=e^{i\theta}\in\mathbb{T}$ and define a new $*$-algebra structure on the $*$-algebra of smooth vectors $C_{\alpha}^{\infty}(L^{2}(M,\slashed{S}))\subset\mathbb{B}(L^2(M,\slashed{S}))$. 
On homogenous elements $S,T\in\mathbb{B}(L^2(M,\slashed{S}))$ with degrees $n(S)=(n_1(S),n_2(S))\in\Z^2$ and $n(T)=(n_1(T),n_2(T))\in\Z^2$, we define a new multiplication and adjoint $\dag$ via
\[
S*T=\lambda^{n_2(S)n_1(T)}ST,\qquad T^\dag=\lambda^{n_1(T)n_2(T)}T^*
\]
where $ST$ and $T^*$ are the existing composition and adjoint respectively. Extending linearly gives a new $*$-algebra structure on $C_{\alpha}^{\infty}(L^{2}(M,\slashed{S}))$, and we denote by $C^{\infty}(M_{\theta})$ the vector space $C^{\infty}(M)$ with this new $*$-algebra structure. The map defined for homogenous elements $T$ by
\[
L:C_{\alpha}^{\infty}(L^{2}(M,\slashed{S}))\to \mathbb{B}(L^{2}(M,\slashed{S})),\quad T\mapsto T\lambda^{n_{2}(T)p_{1}},
\]
 extends to a $*$-representation, and $(C^{\infty}(M_{\theta}),L^{2}(M,\slashed{S}),\slashed{D})$ is a spectral triple. 

For a pair $(S,T)$ of homogenous operators define
\begin{equation}
\label{eq: theta-cocycle}
\Theta(S,T):=\lambda^{n_{2}(S)n_{1}(T)-n_{2}(T)n_{1}(S)}=\Theta(n(S),n(T)).
\end{equation}
The map $\sigma_{\theta}:T^{2}_{\slashed{D}}(M_{\theta})\to T^{2}_{\slashed{D}}(M_{\theta})$ defined on homogeneous forms $\omega,\eta$ by
\begin{equation}
\sigma_{\theta}(\omega\otimes \eta):=\Theta(\omega,\eta)(\eta\otimes\omega),
\label{eq:sigma-theta}
\end{equation}
is a well-defined bimodule map and $\Psi_{\theta}:=\frac{1+\sigma_{\theta}}{2}$ is an idempotent that projects on the junk two-tensors. Lastly, the formula
\begin{equation}
\label{eq: theta-inner-product}
\pairing{\omega}{\eta}_{\theta}:=\lambda^{(n_{1}(\omega)-n_1(\eta))n_{2}(\omega)}\pairing{\omega}{\eta},\qquad \omega,\eta\in \Omega^1_{\slashed{D}}(M_\theta),
\end{equation}
equips $\Omega^{1}_{\slashed{D}}(M_{\theta})$ with a positive definite Hermitian inner product for which $\Psi_{\theta}$ is self-adjoint.
\begin{thm}[\cite{MRLC}, {\rm Theorem 6.12}]
\label{thm:theta-unique}
Let $M$ be a compact Riemannian manifold, $\slashed{S}\to M$ a Dirac bundle, 
$\mathbb{T}^{2}\to\textnormal{Isom}(M)$ a smooth group homomorphism and $\theta\in\mathbb{T}$. For $\Psi_{\theta}$ the $\theta$-deformed junk projection and $\pairing{\cdot}{\cdot}_{\theta}$ the $\theta$-deformed inner product,
$(\Omega^{1}_{\slashed{D}}(M_{\theta}),\dag,\Psi_{\theta},\pairing{\cdot}{\cdot}_{\theta})$ is a $\dag$-concordant Hermitian differential structure.
Moreover for 
$\sigma_{\theta}: T^{2}_{\slashed{D}}(M_{\theta})\to T^{2}_{\slashed{D}}(M_{\theta})$
the $\theta$-deformed flip map \eqref{eq:sigma-theta} there exists a unique Hermitian
  torsion-free $\dag$-bimodule connection $(\nablar^{G_{\theta}},\sigma_{\theta})$ on $\Omega^{1}_{\slashed{D}}(M_{\theta})$.
\end{thm}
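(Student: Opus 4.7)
The plan is to leverage the fact that the $\theta$-deformation is implemented by the single bicharacter $\Theta$ of \eqref{eq: theta-cocycle}, so that on $\mathbb{T}^{2}$-homogeneous components every structural map differs from its classical counterpart by an explicit scalar phase. Each required claim then reduces to the classical Levi-Civita statement (Theorem \ref{thm:LC-class}) together with bookkeeping of these $\Theta$-phases.

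First I would verify that $(\Omega^{1}_{\slashed{D}}(M_{\theta}),\dag,\Psi_{\theta},\pairing{\cdot}{\cdot}_{\theta})$ is an Hermitian differential structure. The underlying vector space is unchanged, and because $\mathbb{T}^{2}$ acts by isometries one can refine any smooth partition-of-unity frame into a $\mathbb{T}^{2}$-homogeneous right frame, which remains a frame over $C^{\infty}(M_{\theta})$; this yields finite generation and projectivity. The identities $\Psi_{\theta}^{2}=\Psi_{\theta}$ and $\Psi_{\theta}^{*}=\Psi_{\theta}$ relative to \eqref{eq: theta-inner-product} follow from $\sigma_{\theta}^{2}=1$, a direct consequence of $\Theta(S,T)\Theta(T,S)=1$. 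The inclusions $JT^{2}_{\slashed{D}}(M_{\theta})\subset{\rm Im}(\Psi_{\theta})\subset m^{-1}(J^{2}_{\slashed{D}}(M_{\theta}))$ are inherited from the classical symmetrisation projection, because the deformed product differs from the classical one only by a phase on homogeneous pairs and so the kernels and images of $\pi_{\slashed{D}}$ and $\widehat{\pi}_{\slashed{D}}$ are $\mathbb{T}^{2}$-invariant.

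Next I would establish $\dag$-concordance. Picking a homogeneous frame $(\omega_{j})$, the three-tensors $W_{\theta}+P_{\theta}W_{\theta}^{\dag}$ and $W_{\theta}^{\dag}+Q_{\theta}W_{\theta}$ decompose homogeneously as the classical $W+PW^{\dag}$ and $W^{\dag}+QW$ rescaled by explicit products of $\Theta$-phases. Using the $2$-cocycle identity for $\Theta$ to commute these phases past the resolvents $(1+\Pi-P_{\theta}Q_{\theta})^{-1}$ and $(1+\Pi-Q_{\theta}P_{\theta})^{-1}$, the deformed condition \eqref{eq:Bram's-condition} reduces term by term to its classical counterpart, which holds by Theorem \ref{thm:LC-class}.

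Existence of a Hermitian torsion-free right connection $\nablar^{G_{\theta}}$ now follows from Theorem \ref{thm:existence}. To show it is a $\sigma_{\theta}$-$\dag$-bimodule connection, I would check on homogeneous one-forms that $\sigma_{\theta}\circ\nablar^{G_{\theta}}$ agrees with the conjugate left connection $\nablal^{G_{\theta}}$, which again reduces to the classical bimodule property of the Riemannian connection modulo an explicit $\Theta$-phase. For uniqueness via Theorem \ref{thm:uniqueness} the crucial observation is that $\sigma_{\theta}$ preserves bidegree and acts trivially on bidegree $0$ components, so injectivity of $\alphar+\sigma_{\theta}^{-1}\circ\alphal$ on $\mathcal{Z}({\rm Im}(\Pi))$ is detected bidegree by bidegree and reduces to the classical injectivity supplied by Theorem \ref{thm:LC-class}. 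The main obstacle throughout is the coherent bookkeeping of $\Theta$-phases across the multiple resolvents appearing in \eqref{eq:Bram's-condition}; the cocycle property of $\Theta$ is what makes every required cancellation succeed.
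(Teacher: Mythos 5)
This statement is quoted from the companion paper (\cite{MRLC}, Theorem 6.12) and is not proved in the present article, so there is no in-paper argument to compare against line by line. That said, your strategy --- reduce every deformed structure to its classical counterpart by tracking the bicharacter phases $\Theta$ on homogeneous components --- is exactly the mechanism the authors use throughout Section 5 of this paper, where the phase bookkeeping is packaged once and for all into the unitary intertwiners $T_\theta^{\X,\Y}$ and $H_\theta$ of Lemmas \ref{lem:tee-theta} and \ref{lem:map-assoc}, giving $\Psi_\theta=T_\theta\Psi T_\theta^{-1}$, $\sigma_\theta=T_\theta\sigma T_\theta^{-1}$, $G_\theta=T_\theta(G)$, and so on. I would strongly recommend adopting that packaging rather than ``commuting phases past the resolvents term by term'': since $P_\theta,Q_\theta,\Pi_\theta$ are literally conjugates of $P,Q,\Pi$ by $H_\theta$ on $T^3_{\slashed{D}}$, the direct-sum decomposition defining concordance, the invertibility of $1+\Pi_\theta-P_\theta Q_\theta$, and the identity $W_\theta+P_\theta W_\theta^\dag=H_\theta(W+PW^\dag)$ (for a homogeneous frame) all transfer in one stroke, whereas an ad hoc term-by-term phase argument inside an operator inverse is delicate to make rigorous.

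The one substantive gap in your sketch is the passage from existence of \emph{a} Hermitian torsion-free connection (Theorem \ref{thm:existence}) to existence of a Hermitian torsion-free $\sigma_\theta$-$\dag$-\emph{bimodule} connection, which is what Theorem \ref{thm:uniqueness} requires as a hypothesis before it delivers uniqueness. The connection produced by Theorem \ref{thm:existence} is not automatically a bimodule connection, and verifying the bimodule property for it requires first identifying it with the $\theta$-deformation $\nablar^G_\theta$ of the classical Levi-Civita connection. The cleaner logical order --- the one followed in Section 5.3 here (Lemmas \ref{lem:Herm-theta}, \ref{lem:TF-theta}, \ref{lem:dag-theta} and Theorem \ref{prop:deformed-uniqueness}) --- is to exhibit $\nablar^G_\theta:=T_\theta^{\Omega^1,\Omega^1}\circ\nablar^G$ directly as an Hermitian, torsion-free $\sigma_\theta$-$\dag$-bimodule connection (each property being a one-line conjugation by $T_\theta$ of the classical property), and only then invoke uniqueness. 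Your final point about injectivity of $\alphar+\sigma_\theta^{-1}\circ\alphal$ on $\mathcal{Z}({\rm Im}(\Pi_\theta))$ reducing degree-by-degree to the classical case is correct in spirit, but again is most easily justified by conjugating the whole map by the intertwiners rather than by inspecting bidegrees separately.
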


In \cite{MRLC} the Levi-Civita connection $\nablar^{G_\theta}$ was constructed explicitly using an homogeneous frame for the Hermitian differential structure $(\Omega^{1}_{\slashed{D}}(M_{\theta}),\dag,\Psi_{\theta},\pairing{\cdot}{\cdot}_{\theta})$.
In subsection \ref{subsec:deformed-bimod}, we will show how to deform connections $\nablar\mapsto \nablar_\theta$  on suitable $\mathbb{T}^2$-equivariant bundles. Then in subsection \ref{subsec:LC-theta} we apply this method, and the
uniqueness guaranteed by Theorem \ref{thm:theta-unique}, to show that in fact $\nablar^{G_\theta}=\nablar^G_\theta$, where $\nablar^G$ is the Levi-Civita connection of the original manifold.

\subsection{$\theta$-deformation of inner product bimodules and connections}
\label{subsec:deformed-bimod}
We will give a general procedure for $\theta$-deformations of $\mathbb{T}^2$-equivariant bimodules over $*$-algebras $\B$, as well as connections thereon. In order to accommodate general Dirac bundles in the subsequent sections (which may or may not be $\dag$-bimodules) we work in the setting of equivariant inner product bimodules.

\begin{defn} 
Let $\B$ be a local algebra (in the $C^{*}$-algebra $B$) 
equipped with an action of $\mathbb{T}^{2}$ by $*$-automorphisms 
such that $\B$ is contained in the $C^1$-subalgebra of $B$ for the 
$\mathbb{T}^2$ action.
A $\mathbb{T}^{2}$-equivariant inner product $\B$-bimodule is 
a triple 
$(\mathcal{X},\leftindex_{\B}{\pairing{\cdot}{\cdot}},\pairing{\cdot}{\cdot}_{\B})$, 
where $\X$ is a bimodule over $\B$ equipped with a left inner 
product $\leftindex_{\B}{\pairing{\cdot}{\cdot}}$ and right inner 
product $\pairing{\cdot}{\cdot}_{\B}$ in which it becomes a 
left- and right $\mathbb{T}^{2}$-equivariant pre-Hilbert 
$C^{*}$-module over $\B$. 
\end{defn}
Setting 
$
\X^{*}:=\overrightarrow{\rm{Hom}}^{*}(\X,\B)$ and $\leftindex^{*}{\X}:=\overleftarrow{\rm{Hom}}(\X,\B),
$
the inner products define antilinear isomorphisms
\[
\X\to \X^{*},\quad x\mapsto  x^{*}:=\langle x|_{\B},\quad \X\to\leftindex^{*}{\X},\quad x\mapsto \leftindex^{*}{x}:=\leftindex_{\B}{|x\rangle}. 
\]
Given a $\dag$-bimodule $\mathcal{Y}$ over $\B$, the bimodules $\X\ox_{\B}\mathcal{Y}\ox_{\B}\X^{*}$ and $\leftindex^{*}{\X}\ox_{\B}\mathcal{Y}\ox_{\B}\X$ become $\dag$-bimodules for the operations
\[
(x_{1}\ox y\ox x_{2}^{*})^{\dag}:=x_{2}\ox y^{\dag}\ox x_{1}^{*},\quad (\leftindex^{*}{x}_{1}\ox y\ox x_{2})^{\dag}:=\leftindex^{*}{x}_{2}\ox y^{\dag}\ox x_{1}.
\]
Given a right frame $(x_{i})$ for $\X$, $(x_{i}^{*})$ is a left frame for $\X^{*}$ and a left frame $(y_{j})$ for $\X$ gives a right frame $\leftindex^{*}{y}_{j}$ for $\leftindex^{*}{\X}$.

\begin{lemma}
\label{lem: deformed-module}
Let $(\mathcal{X},\leftindex_{\B}{\pairing{\cdot}{\cdot}},\pairing{\cdot}{\cdot}_{\B})$ be a $\mathbb{T}^{2}$-equivariant inner product bimodule over the local $*$-algebra $\B$. For $a,b\in\B$ and $x,y\in \X$ all homogeneous, the formulae
\begin{align*}
a*x&:=\lambda^{n_{2}(a)n_{1}(x)}ax,  &x*b&:=\lambda^{n_{2}(x)n_{1}(b)}xb,\\
\pairing{x}{y}_{\theta}&:=\lambda^{(n_{1}(x)-n_{1}(y))n_{2}(x)}\pairing{x}{y}_\B, &\leftindex_{\theta}{\pairing{x}{y}}&:=\lambda^{(n_{2}(y)-n_{2}(x))n_{1}(y)}\leftindex_{\B}{\pairing{x}{y}}
\end{align*}
make the linear space $\X$ into a $\mathbb{T}^{2}$-equivariant inner product bimodule over $\B_\theta$, which we denote by $\X_{\theta}$. 
Here $\B_\theta$ is the deformation of $\B$ as a module over itself. The module $\X$ admits homogeneous frames and any homogeneous frame for $\X$ is a frame for $\X_{\theta}$.
\end{lemma}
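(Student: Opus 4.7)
The proof is a direct verification resting on the fact that $\Theta(n,m) := \lambda^{n_2 m_1 - m_2 n_1}$ is a bicharacter on $\mathbb{Z}^2 \times \mathbb{Z}^2$: every identity to be checked reduces to collecting and cancelling bilinear exponents of $\lambda$ attached to $\mathbb{T}^2$-degrees of homogeneous elements. All the verifications proceed on homogeneous components and then extend linearly, the decomposition being available because the $\mathbb{T}^2$-action on $\X$ is $C^1$.

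The first task is to check the three associativities $(a*b)*x = a*(b*x)$, $(x*a)*b = x*(a*b)$ and $(a*x)*b = a*(x*b)$, each of which expands to an equality of monomials in $\lambda$ whose exponents cancel via the bicharacter property. The second task is sesquilinearity of the right inner product over $\B_\theta$, namely $\pairing{x*a}{y}_\theta = a^\dag * \pairing{x}{y}_\theta$ and $\pairing{x}{y*b}_\theta = \pairing{x}{y}_\theta * b$, where $a^\dag = \lambda^{n_1(a)n_2(a)}a^*$ is the deformed adjoint on $\B_\theta$ already recalled in the preceding subsection. The analogous statements for $\leftindex_{\theta}{\pairing{\cdot}{\cdot}}$, and the two-sided compatibility $\leftindex_{\theta}{\pairing{x}{y}}*z = x*\pairing{y}{z}_\theta$, are handled identically. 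Positivity of $\pairing{\cdot}{\cdot}_\theta$ follows from positivity of $\pairing{\cdot}{\cdot}_\B$ because the scalar twist reduces to $1$ on the diagonal $x = y$ of homogeneous vectors, and general smooth vectors are handled through the $*$-representation $L$ into operators on $L^2(M,\slashed{S})$ introduced in the preceding subsection. Finally, $\mathbb{T}^2$-equivariance is immediate since the underlying linear space and torus action are unaltered, and the $\lambda$-twists depend only on degrees and are therefore $\mathbb{T}^2$-invariant.

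For the existence of homogeneous frames, I would start from any frame $(x_i)$ for $\X$ and decompose each $x_i = \sum_n x_i^{(n)}$ into its homogeneous components, which lie in $\X$ by the smoothness hypothesis. For homogeneous $x \in \X$ of degree $m$ the summand $x_i^{(n)}\pairing{x_i^{(n')}}{x}_\B$ has degree $n + m - n'$, so only terms with $n = n'$ survive in $x = \sum_i x_i \pairing{x_i}{x}_\B$, giving the homogeneous frame identity $x = \sum_{i,n} x_i^{(n)}\pairing{x_i^{(n)}}{x}_\B$. To then prove that any homogeneous frame $(y_j)$ for $\X$ remains a frame for $\X_\theta$, I would compute on a homogeneous $x$
\[
y_j * \pairing{y_j}{x}_\theta = \lambda^{(n_1(y_j)-n_1(x))n_2(y_j) + n_2(y_j)(n_1(x)-n_1(y_j))}\, y_j\, \pairing{y_j}{x}_\B = y_j\pairing{y_j}{x}_\B,
\]
so the twisted reproducing sum collapses term by term to the untwisted one and summation over $j$ recovers $x$. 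The main obstacle is purely combinatorial bookkeeping of $\lambda$-exponents; no analytic issue intervenes in the frame argument because degree matching makes only finitely many homogeneous summands active on any given homogeneous vector.
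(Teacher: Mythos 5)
Your proposal is essentially the argument the paper has in mind: the paper's own ``proof'' of this lemma is a one-line deferral to \cite[Lemmas 6.4, 6.5, Corollary 6.6]{MRLC}, where exactly this kind of exponent bookkeeping is carried out for $\Omega^1_{\slashed{D}}(M_\theta)$. Your verifications of the module associativities, of the compatibilities $\pairing{x}{y*b}_\theta=\pairing{x}{y}_\theta*b$ and $\pairing{x*a}{y}_\theta=a^\dag*\pairing{x}{y}_\theta$, and of the collapse $y_j*\pairing{y_j}{x}_\theta=y_j\pairing{y_j}{x}_\B$ are all correct; I checked the exponents and they cancel as you claim.

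Two points deserve more care. First, positivity: the twist is $1$ only on the diagonal of \emph{homogeneous} vectors, and for a general $x=\sum_n x_n$ the cross terms $\pairing{x_n}{x_m}_\theta$ carry nontrivial phases, so positivity of $\pairing{x}{x}_\theta$ is not formal. Your appeal to the representation $L$ is the right idea but is stated for operators on $L^2(M,\slashed{S})$, whereas the lemma concerns an abstract $\mathbb{T}^2$-equivariant inner product bimodule; one should either pass to a faithful covariant representation of the linking algebra and observe that the deformed inner product becomes $\pairing{L(x)}{L(y)}$ there, or invoke the general fact that Rieffel-type deformation by a $\mathbb{T}^2$-cocycle preserves positivity. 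Second, finiteness of the homogeneous frame: decomposing a finite frame $(x_i)$ into homogeneous components produces the correct reproducing identity, but the resulting family $(x_i^{(n)})_{i,n}$ is in general countably infinite (rapid decay, not finite support), while the paper's notion of frame, following \cite{FL02}, is a \emph{finite} collection. To get a finite homogeneous frame one should instead use equivariant stabilization: realise $\X$ as the image of an equivariant projection on $\B^N\ox V$ for a finite-dimensional unitary $\mathbb{T}^2$-representation $V$, and take the images of the (homogeneous) standard generators. Neither issue invalidates your route, but both are the actual content hiding behind the ``straightforward verification.''
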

\begin{proof} 
This is proved just as in \cite[Lemmas 6.4 and 6.5, Corollary 6.6]{MRLC}, where the same facts were verified for the $\theta$-deformed one-forms $\Omega^{1}_{\slashed{D}}(M_\theta)$. 
\end{proof}
{\bf Notation.} To alleviate notation, we adopt the following abbreviations for the remainder of this section. 
We  write $\ox:=\ox_\B$ and $\ox_\theta:=\ox_{\B_\theta}$. 

Given a $\mathbb{T}^{2}$-equivariant inner product right $\B$-module $\X$ and a $\mathbb{T}^{2}$-equivariant inner product bimodule $\Y$, the tensor product $\X\ox_{\B}\Y$ is an equivariant inner product right module for the action
\[
\alpha_{z}(x\ox y):=\alpha^{\X}_{z}(x)\ox\alpha^{\Y}_{z}(y),\quad z\in\mathbb{T}^{2}.
\]
Analogous statements hold for the case where $\X$ is a bimodule and $\Y$ is a left module. We prove that the interior tensor product commutes with deformation in the following sense.

\begin{lemma} 
\label{lem:tee-theta}
Let $\X,\Y$ be $\mathbb{T}^2$ equivariant $\B$-bimodules.
The map $T^{\X,\Y}_{\theta}$ defined for homogeneous elements $x,y$ by
\begin{align*}
T_{\theta}^{\X,\Y}:(\X\ox_{\B}\Y)_{\theta}\rightarrow \X_{\theta}\ox_{\B_\theta}\Y_{\theta},\quad x\ox y\mapsto \lambda^{-n_{2}(x)n_{1}(y)}x\ox_\theta y,
\end{align*}
is an isomorphism of inner product right, left or bimodules.
\end{lemma}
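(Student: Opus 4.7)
The plan is to verify in turn well-definedness of $T_\theta^{\X,\Y}$ on the balanced tensor product, existence of an inverse, compatibility with the left and right $\B_\theta$-actions, and compatibility with the deformed inner products. All four points reduce to bookkeeping of $\lambda$-powers controlled by the cocycle $\Theta$ of \eqref{eq: theta-cocycle}, so conceptually there is no obstruction; the difficulty is purely combinatorial.

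For well-definedness, fix homogeneous $a \in \B$, $x \in \X$, $y \in \Y$ and rewrite the ordinary actions as $xa = \lambda^{-n_2(x) n_1(a)}(x *_\theta a)$ and $ay = \lambda^{-n_2(a) n_1(y)}(a *_\theta y)$. A direct expansion shows both $T_\theta^{\X,\Y}(xa \ox y)$ and $T_\theta^{\X,\Y}(x \ox ay)$ collapse to $\lambda^{-n_2(x)(n_1(a) + n_1(y))}\, x \ox_\theta ay$, once we invoke the $\B_\theta$-balancing $(x *_\theta a) \ox_\theta y = x \ox_\theta (a *_\theta y)$. So $T_\theta^{\X,\Y}$ descends to the balanced tensor product $(\X \ox_\B \Y)_\theta$. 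The candidate inverse $S(x \ox_\theta y) := \lambda^{n_2(x) n_1(y)}\, x \ox y$ is well-defined by the symmetric computation, and $S$ and $T_\theta^{\X,\Y}$ are manifestly mutually inverse on homogeneous elementary tensors.

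For the action compatibility, one computes both sides separately. Using $a *_\theta (x \ox y) = \lambda^{n_2(a)(n_1(x)+n_1(y))}(ax) \ox y$ and applying $T_\theta^{\X,\Y}$ yields $\lambda^{n_2(a) n_1(x) - n_2(x) n_1(y)}(ax) \ox_\theta y$, which coincides with $a *_\theta T_\theta^{\X,\Y}(x \ox y)$. The right action is handled identically, and the bimodule case follows by combining the two.

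The hard part is the inner-product check, where one must track four bidegrees simultaneously in
$\pairing{x_1 \ox y_1}{x_2 \ox y_2}_\B = \pairing{y_1}{\pairing{x_1}{x_2}_\B\, y_2}_\B$.
The key observation is that $\mathbb{T}^2$-equivariance together with conjugate-linearity in the first argument force $\pairing{x_1}{x_2}_\B$ to be homogeneous of bidegree $n(x_2) - n(x_1)$. Substituting this into the definitions of $\pairing{\cdot}{\cdot}_\theta$ and $*_\theta$ and reorganising the resulting $\lambda$-powers via the cocycle identity, one sees that the right tensor-product inner product on $\X_\theta \ox_{\B_\theta} \Y_\theta$ pulls back through $T_\theta^{\X,\Y}$ to the deformed inner product on $(\X \ox_\B \Y)_\theta$; the left inner product case is parallel. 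Analogous calculations already appear in the treatment of $\theta$-deformed one-forms underlying Theorem \ref{thm:theta-unique} in \cite{MRLC}, so the reorganisation of exponents should follow the same pattern.
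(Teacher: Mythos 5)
Your proposal is correct and follows essentially the same route as the paper: check compatibility with the balancing relation to descend to $(\X\ox_\B\Y)_\theta$, then verify that the deformed inner products are preserved by tracking the $\lambda$-exponents, using that $\pairing{x_1}{x_2}_\B$ is homogeneous of bidegree $n(x_2)-n(x_1)$. The only cosmetic difference is that you exhibit an explicit inverse $S$, whereas the paper deduces bijectivity from isometry together with dense range; both are fine, and the exponent bookkeeping you defer is exactly the computation the paper carries out.
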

\begin{proof}
For homogeneous $b\in \B$ we have
\begin{align*}
T_{\theta}^{\X,\Y}(xb\ox y)
&=\lambda^{-n_{2}(x)n_{1}(y)-n_{2}(b)n_{1}(y)}xb\ox_\theta y\\
&=\lambda^{-n_{2}(x)n_{1}(y)-n_{2}(b)n_{1}(y)-n_{2}(x)n_{1}(b)}x*b\ox_\theta y\\
&=\lambda^{-n_{2}(x)n_{1}(y)-n_{2}(b)n_{1}(y)-n_{2}(x)n_{1}(b)}x\ox_\theta b*y\\
&=\lambda^{-n_{2}(x)n_{1}(y)-n_{2}(x)n_{1}(b)}x\ox_\theta by\\
&=\lambda^{-n_{2}(x)n_{1}(by)}x\ox_\theta by\\
&=T_{\theta}^{\X,\Y}(x\ox by),
\end{align*}
so $T_{\theta}^{\X,\Y}$ is compatible with the balancing relations on $\X\ox_{\B}\Y$ and $\X_\theta\ox_{\B_{\theta}}\Y_{\theta}$. 
Since $T_\theta^{\X,\Y}$ is a bilinear map on $\X\times\Y$ compatible with the balancing, it gives rise to a well-defined map on $\X\ox_\B\Y$.

Similarly, we prove that $T_{\theta}^{\X,\Y}$ preserves the inner products. Let $x_j,y_j$ be homogeneous elements of $\X,\Y$ for $j=1,2$. Then
\begin{align*}
&\pairing{T_{\theta}^{\X,\Y}(x_{1}\ox y_{1})}{T^{\X,\Y}_{\theta}(x_{2}\ox y_{2})}
=\lambda^{n_{2}(x_1)n_{1}(y_1)-n_{2}(x_{2})n_{1}(y_2)}\pairing{y_{1}}{\pairing{x_{1}}{x_{2}}_{\theta}*y_{2}}_{\theta}\\
&=\lambda^{n_{2}(x_1)n_{1}(y_1)-n_{2}(x_{2})n_{1}(y_2)}\lambda^{(n_{1}(x_1)-n_{1}(x_2))n_{2}(x_1)}\lambda^{(n_{1}(y_1)-n_{1}(y_2)+n_{1}(x_1)-n_1(x_2))n_{2}(y_1)}\!\pairing{y_{1}}{\pairing{x_{1}}{x_2}*y_{2}}\\
&=\!\lambda^{n_{2}(x_1)n_{1}(y_1)-n_{2}(x_{2})n_{1}(y_2)+(n_{1}(x_1)-n_{1}(x_2))n_{2}(x_1)+(n_{1}(y_1)-n_{1}(y_2)+n_{1}(x_1)-n_1(x_2))n_{2}(y_1)}\!\lambda^{(n_{2}(x_2)-n_{2}(x_1))n_{1}(y_1)}\\
&\qquad\qquad\qquad\qquad\qquad\qquad\qquad\qquad\qquad\qquad\qquad\qquad\qquad\qquad\qquad\times\pairing{y_{1}}{\pairing{x_{1}}{x_2}y_{2}}\\
&=\lambda^{(n_{1}(x_{1})+n_{1}(y_1)-n_{1}(x_{2})-n_{1}(y_2))(n_{2}(x_1)+n_2(y_1))}\pairing{x_1\ox y_{1}}{x_2\ox y_{2}}\\
&=\pairing{x_1\ox y_{1}}{x_2\ox y_{2}}_{\theta}.
\end{align*}
Thus $T_{\theta}^{\X,\Y}$ is an injective right module map, and as it has dense range as well, it is a unitary isomorphism.
\end{proof}

In order to study second covariant derivatives and curvature tensors, we need to be able to deform threefold tensor products.
\newcommand{\ZZ}{\mathcal{Z}}
\begin{lemma}
\label{lem:map-assoc}
Let $\X,\Y,\ZZ$ be $\mathbb{T}^2$-equivariant $\B$-bimodules.
We have an equality of linear maps 
\[
1\ox_\theta T^{\Y,\ZZ}_\theta\circ T^{\X,\Y\ox\ZZ}_\theta
=T^{\X,\Y}_\theta\ox_\theta1\circ T^{\X\ox\Y,\ZZ}_\theta:(\X\ox\Y\ox\ZZ)_\theta\to \X_\theta\ox_\theta \Y_\theta\ox_\theta\ZZ_\theta.
\] 
Denote this linear map by $H_{\theta}:(\X\ox\Y\ox\ZZ)_\theta\to \X_\theta\ox_\theta\Y_\theta\ox_\theta\ZZ_\theta$.
\end{lemma}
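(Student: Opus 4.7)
The plan is to verify the equality on pure tensors of homogeneous elements and deduce the general statement by linearity (and density) together with the well-definedness established in Lemma \ref{lem:tee-theta}. Since both compositions are $\C$-linear maps between the same vector space $\X\ox\Y\ox\ZZ$ (equipped with two different algebraic structures), it suffices to check agreement on $x\ox y\ox z$ with $x\in \X$, $y\in \Y$, $z\in \ZZ$ homogeneous of degrees $n(x),n(y),n(z)\in\Z^2$.

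Unwinding the definition of $T_\theta^{\X,\Y\ox\ZZ}$, the class $y\ox z$ is homogeneous of degree $n(y)+n(z)$, so
\[
T^{\X,\Y\ox\ZZ}_\theta(x\ox(y\ox z))=\lambda^{-n_{2}(x)(n_{1}(y)+n_{1}(z))}\,x\ox_\theta (y\ox z).
\]
Applying $1\ox_\theta T^{\Y,\ZZ}_\theta$ yields
\[
\lambda^{-n_{2}(x)n_{1}(y)-n_{2}(x)n_{1}(z)-n_{2}(y)n_{1}(z)}\,x\ox_\theta y\ox_\theta z.
\]
A parallel computation for the other composition gives
\[
T^{\X,\Y}_\theta\ox_\theta1\circ T^{\X\ox\Y,\ZZ}_\theta(x\ox y\ox z)=\lambda^{-(n_{2}(x)+n_{2}(y))n_{1}(z)-n_{2}(x)n_{1}(y)}\,x\ox_\theta y\ox_\theta z,
\]
and the two exponents of $\lambda$ coincide term-by-term. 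This proves the equality on homogeneous generators.

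The only genuine issue to address is that one must make sure both sides descend to the respective balanced tensor products. This is already guaranteed by Lemma \ref{lem:tee-theta}: each $T_\theta^{\cdot,\cdot}$ is a well-defined bimodule isomorphism, and the tensor products of such isomorphisms over $\B_\theta$ are again well-defined. Extending by linearity, and invoking the density of finite sums of homogeneous tensors in $(\X\ox\Y\ox\ZZ)_\theta$, we conclude that the two compositions agree as linear maps, so it is consistent to denote their common value by $H_\theta$. The only ``difficulty'' is bookkeeping with the exponents, which is streamlined by noting that both sides simply record the cocycle $\Theta$ evaluated on the total pairwise interaction between $x$, $y$, and $z$, and $\Theta$ is associative in the sense that the total twist factor depends only on the unordered partition of the tensor factors.
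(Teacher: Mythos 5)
Your proposal is correct and follows essentially the same route as the paper: evaluate both compositions on a simple tensor of homogeneous elements, observe that both phase exponents equal $-n_2(x)n_1(y)-n_2(x)n_1(z)-n_2(y)n_1(z)$, and extend by linearity. The additional remarks on well-definedness via Lemma \ref{lem:tee-theta} are consistent with what the paper implicitly relies on.
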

\begin{proof}
Evaluating both maps on a simple tensor of homogeneous $x\in\X$, $y\in\Y$ and $z\in\ZZ$ yields
\begin{align*}
1\ox_\theta T^{\Y,\ZZ}_\theta\circ T^{\X,\Y\ox\ZZ}_\theta(x\ox y\ox z)
&=1\ox_\theta T^{\Y,\ZZ}_\theta(\lambda^{-n_2(x)(n_1(y)+n_1(z))}x\ox_\theta (y\ox z))\\
&=\lambda^{-n_2(x)(n_1(y)+n_1(z))}\lambda^{-n_2(y)n_1(z)}x\ox_\theta y\ox_\theta z
\end{align*}
and
\begin{align*}
T^{\X,\Y}_\theta\ox_\theta1\circ T^{\X\ox\Y,\ZZ}_\theta(x\ox y\ox z)_\theta
&=T^{\X,\Y}_\theta\ox_\theta1(\lambda^{-(n_2(x)+n_2(y))n_1(z)}(x\ox y)\ox_\theta z)\\
&=\lambda^{-(n_2(x)+n_2(y))n_1(z)}\lambda^{-n_2(x)n_1(y)}x\ox_\theta y\ox_\theta z.
\end{align*}
Comparison of the phase factors and extending by linearity completes the proof.
\end{proof}

\begin{defn}
Let $(\B,H,\D)$ be a $\mathbb{T}^2$-equivariant spectral triple \cite{Y}, and $\Omega^1_{\D}$ the first order differential forms, 
which are equivariant for a $\mathbb{T}^2$-action. 
Given $\mathbb{T}^{2}$-equivariant right, respectively left connections
\begin{align*}
\nablar&:\mathcal{X}\to \mathcal{X}\otimes_{\B}\Omega^{1}_{\D}\quad\mbox{and}\quad
\nablal:\mathcal{X}\to \Omega^{1}_{\D}\otimes_{\B}\mathcal{X}
\end{align*}
the deformed connections are the maps
\[
\nablar_{\theta}:\mathcal{X}_{\theta}\to \mathcal{X}_{\theta}\otimes_{\B_\theta}(\Omega^{1}_{\D})_\theta,\qquad \nablar_{\theta}(x)=T^{\X,\Omega^1}_\theta(\nablar(x)),
\]
\[
\nablal_{\theta}:\mathcal{X}_{\theta}\to (\Omega^{1}_{\D})_\theta\otimes_{\B_\theta}\mathcal{X}_{\theta},\qquad\nablal_{\theta}(x)=T^{\Omega^1,\X}_\theta(\nablal(x)).
\]
\end{defn} 
It is a straightforward verification that deformed connections are indeed connections. For $x\in\X$ and $b\in\B$ we have
\begin{align*}
\nablar_{\theta}(x*b)
&=T^{\X,\Omega^1}_{\theta}(\nablar(x*b))
=\lambda^{n_{2}(x)n_{1}(b)}T^{\X,\Omega^1}_{\theta}(\nablar(xb))\\
&=\lambda^{n_{2}(x)n_{1}(b)}T^\X_{\theta}(\nablar(x)b+x\ox [\D,b])\\
&=T^{\X,\Omega^1}_{\theta}(\nablar(x))*b+\lambda^{n_{2}(x)n_{1}(b)}T^{\X,\Omega^1}_{\theta}(x\ox [\D,b])\\
&=\nablar_{\theta}(x)*b+x\ox_\theta [\D,b],
\end{align*}
and similarly for left connections. 

\subsection{Deformation of the Levi-Civita connection}
\label{subsec:LC-theta}
Let $(M,g)$ be a compact Riemannian manifold, $\slashed{S}\to M$ a $\mathbb{T}^{2}$-equivariant Dirac bundle and $(C^{\infty}(M), L^{2}(M,\slashed{S}), \slashed{D})$ the associated $\mathbb{T}^{2}$-equivariant spectral triple. Furthermore let $(\Omega^{1}_{\slashed{D}}(M),*,\Psi,\pairing{\cdot}{\cdot})$ be the Hermitian differential structure of the equivariant spectral triple $(C^{\infty}(M), L^{2}(M,\slashed{S}), \slashed{D})$, and $\nablar^G$ the associated Levi-Civita connection.

We will now show that the Levi-Civita connection $\nablar^{G_{\theta}}$ of the deformed Hermitian differential structure $(\Omega^{1}_{\slashed{D}}(M_{\theta}),\dag,\Psi_{\theta},\pairing{\cdot}{\cdot}_{\theta})$ indeed arises as the deformation of the classical Levi-Civita connection, so that $\nablar^{G_{\theta}}=\nablar^{G}_{\theta}$. We will achieve this by showing that $\nablar^{G}_{\theta}$ is an Hermitian torsion-free $\sigma_\theta$-bimodule connection, and so coincides with $\nablar^{G_\theta}$ by Theorem \ref{thm:theta-unique}.

We first consider the deformation of $\nablar^G\ox1+1\ox\nablal^\X$. This sum is a well-defined map
$\Omega^1\ox\X\to\Omega^1\ox\Omega^1\ox\X$ by \cite[Proposition 2.30]{MRLC}, and we want to compare it to  the (also well-defined)  map
\begin{align*}
&\nablar_\theta^G\ox1+1\ox\nablal^\X_\theta:\Omega^1_\theta\ox_\theta\X_\theta\to\Omega^1_\theta\ox_\theta\Omega^1_\theta\ox_\theta\X_\theta.
\end{align*}

\begin{lemma}
\label{lem:well-defined-pieces} 
With $H$ as in Lemma \ref{lem:map-assoc} there is an equality of maps
\[
\nablar^G_\theta\ox_\theta1+1\ox_\theta\nablal^\X_\theta=H_{\theta}\circ (\nablar^G\ox1+1\ox\nablal^\X)\circ (T^{\Omega^{1},\X}_{\theta})^{-1}:\Omega^{1}_{\theta}\ox_{\theta}\X_{\theta}\to \Omega^{1}_{\theta}\ox_{\theta}\Omega^{1}_{\theta}\ox_{\theta}\X_{\theta}.
\]
\end{lemma}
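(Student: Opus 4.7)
The strategy is to evaluate both sides on a homogeneous simple tensor $\omega\ox_\theta x\in\Omega^1_\theta\ox_\theta\X_\theta$ and compare the resulting $\lambda$-phase factors. Since both maps are $\C$-linear and defined everywhere, agreement on simple homogeneous tensors suffices.

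The key input beyond Lemmas \ref{lem:tee-theta} and \ref{lem:map-assoc} is $\mathbb{T}^2$-equivariance of $\nablar^G$ and $\nablal^\X$. When $\omega$ is homogeneous of bidegree $n(\omega)=(n_1(\omega),n_2(\omega))$, the image $\nablar^G(\omega)=\sum_i\rho_i\ox\eta_i$ decomposes into homogeneous summands with $n(\rho_i)+n(\eta_i)=n(\omega)$; similarly $\nablal^\X(x)=\sum_j\alpha_j\ox y_j$ with $n(\alpha_j)+n(y_j)=n(x)$. This controls the phase factors produced by $T_\theta$ and $H_\theta$ in a uniform way.

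First I would compute the left-hand side. Using the definitions $\nablar^G_\theta=T^{\Omega^1,\Omega^1}_\theta\circ\nablar^G$ and $\nablal^\X_\theta=T^{\Omega^1,\X}_\theta\circ\nablal^\X$, one obtains
\[
(\nablar^G_\theta\ox_\theta1)(\omega\ox_\theta x)=\sum_i\lambda^{-n_2(\rho_i)n_1(\eta_i)}\rho_i\ox_\theta\eta_i\ox_\theta x,
\]
\[
(1\ox_\theta\nablal^\X_\theta)(\omega\ox_\theta x)=\sum_j\lambda^{-n_2(\alpha_j)n_1(y_j)}\omega\ox_\theta\alpha_j\ox_\theta y_j.
\]
Next I would compute the right-hand side. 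Applying $(T^{\Omega^1,\X}_\theta)^{-1}$ first yields the factor $\lambda^{n_2(\omega)n_1(x)}\omega\ox x$. For the $\nablar^G\ox1$ summand I would apply the form $H_\theta=(T^{\Omega^1,\Omega^1}_\theta\ox_\theta1)\circ T^{\Omega^1\ox\Omega^1,\X}_\theta$ from Lemma \ref{lem:map-assoc}, producing the accumulated factor $\lambda^{-n_2(\rho_i)n_1(\eta_i)-(n_2(\rho_i)+n_2(\eta_i))n_1(x)}$, which by equivariance equals $\lambda^{-n_2(\rho_i)n_1(\eta_i)-n_2(\omega)n_1(x)}$ and cancels against the initial $\lambda^{n_2(\omega)n_1(x)}$. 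For the $1\ox\nablal^\X$ summand I would instead use $H_\theta=(1\ox_\theta T^{\Omega^1,\X}_\theta)\circ T^{\Omega^1,\Omega^1\ox\X}_\theta$, producing the factor $\lambda^{-n_2(\omega)(n_1(\alpha_j)+n_1(y_j))-n_2(\alpha_j)n_1(y_j)}$, which by equivariance of $\nablal^\X$ equals $\lambda^{-n_2(\omega)n_1(x)-n_2(\alpha_j)n_1(y_j)}$ and again cancels the initial phase.

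Comparing terms summand by summand matches the left-hand side exactly. The only nontrivial step is bookkeeping the phases, and the main obstacle is deploying the two equivalent presentations of $H_\theta$ from Lemma \ref{lem:map-assoc} on the two summands of $\nablar^G\ox1+1\ox\nablal^\X$ so that the equivariance constraints $n(\rho_i)+n(\eta_i)=n(\omega)$ and $n(\alpha_j)+n(y_j)=n(x)$ can be used to collapse the exponents. Extending $\C$-linearly to non-homogeneous elements finishes the proof.
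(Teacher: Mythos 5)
Your proposal is correct and follows essentially the same route as the paper: both sides are evaluated on homogeneous simple tensors, the two factorisations of $H_\theta$ from Lemma \ref{lem:map-assoc} are applied to the two summands, and $\mathbb{T}^2$-equivariance of the connections is used to collapse the phase exponents against the factor $\lambda^{n_2(\omega)n_1(x)}$ coming from $(T^{\Omega^1,\X}_\theta)^{-1}$. The only cosmetic difference is that the paper treats $\nablar^G(\omega)$ and $\nablal^\X(x)$ as single elements of total degree $n(\omega)$ and $n(x)$ rather than decomposing them into homogeneous summands $\rho_i\ox\eta_i$ and $\alpha_j\ox y_j$.
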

\begin{proof}
Using the definition of the deformed connection $\nablar^G_\theta$,  for homogeneous $\omega,x$ we have
\begin{align*}
\nablar_\theta(\omega)\ox_\theta x
&=(T^{\X,\Omega^1}_\theta\ox_{\theta}1)(\nablar(\omega)\ox_\theta x)\\
&=\lambda^{n_{2}(\omega)n_{1}(x)}(T^{\X,\Omega^1}_\theta\ox 1)\circ T^{\X\ox\Omega^1,\X}_\theta(\nablar(\omega)\ox x)\\
&=\lambda^{n_{2}(\omega)n_{1}(x)}H_{\theta}(\nablar(\omega)\ox x).
\end{align*}
Similarly we find that
\begin{align*}\omega\ox_\theta \nablal_\theta(x)=\lambda^{n_{2}(\omega)n_{1}(x)}(1\ox_\theta T^{\Omega^1,\X}_\theta)\circ T^{\X,\Omega^1\ox\X}_\theta(\omega\ox_\theta \nablal_\theta(x))=\lambda^{n_{2}(\omega)n_{1}(x)}H_{\theta}(\omega\ox \nablal(x)),
\end{align*}
which proves the claim.
\end{proof}

\begin{lemma}
\label{lem:Herm-theta}
Let $(\omega_j)\subset \Omega^1$ be an homogeneous frame,
and $G_\theta=\sum_j\omega_j\ox_\theta\omega_j^\dag\in \Omega^1\ox_{\B_\theta}\Omega^1$ the deformed metric. Then the deformation $\nablar^G_\theta$ of the Levi-Civita connection $\nablar^G$  satisfies
\[
(\nablar_\theta^G\ox1+1\ox\nablal_\theta^G)(G_\theta)=0
\]
and so $\nablar_\theta^G$ is Hermitian.
\end{lemma}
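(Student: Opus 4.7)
The plan is to transport the classical Hermiticity identity across the deformation isomorphisms. Since the undeformed Levi--Civita connection $\nablar^G$ on $\Omega^1_{\slashed{D}}(M)$ is Hermitian by Theorem \ref{thm:LC-class}, Equation \eqref{eq:conj-pair-Herm} applied to the homogeneous frame $(\omega_j)$ gives
\[
(\nablar^G \ox 1 + 1 \ox \nablal^G)(G) = 0,
\]
where $G = \sum_j \omega_j \ox \omega_j^\dag$ is the undeformed line element and here $\dag$ is the ordinary operator adjoint $*$.

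The first step is to show $T^{\Omega^1,\Omega^1}_\theta(G) = G_\theta$. Classically $\omega_j^\dag = \omega_j^*$, whereas in the deformed $\dag$-structure one has $\omega_j^\dag = \lambda^{n_1(\omega_j) n_2(\omega_j)} \omega_j^*$. Using $n_1(\omega_j^*) = -n_1(\omega_j)$, the phase $\lambda^{-n_2(\omega_j) n_1(\omega_j^*)}$ produced by $T^{\Omega^1,\Omega^1}_\theta$ on the simple tensor $\omega_j \ox \omega_j^*$ combines precisely with the deformed $\dag$ so that
\[
T^{\Omega^1,\Omega^1}_\theta(G) = \sum_j \omega_j \ox_\theta \omega_j^\dag = G_\theta,
\]
with $\omega_j^\dag$ now read in the deformed sense.

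Next, applying Lemma \ref{lem:well-defined-pieces} with $\X = \Omega^1_{\slashed{D}}$ yields
\[
\nablar^G_\theta \ox_\theta 1 + 1 \ox_\theta \nablal^G_\theta = H_\theta \circ (\nablar^G \ox 1 + 1 \ox \nablal^G) \circ (T^{\Omega^1,\Omega^1}_\theta)^{-1}.
\]
Evaluating both sides at $G_\theta$ and inserting the identification above,
\[
(\nablar^G_\theta \ox_\theta 1 + 1 \ox_\theta \nablal^G_\theta)(G_\theta) = H_\theta\bigl((\nablar^G \ox 1 + 1 \ox \nablal^G)(G)\bigr) = 0.
\]
Since the homogeneous frame $(\omega_j)$ remains a frame for $\Omega^1_{\slashed{D}}(M_\theta)$ by Lemma \ref{lem: deformed-module}, \eqref{eq:conj-pair-Herm} applied in the deformed setting immediately implies that $\nablar^G_\theta$ is Hermitian.

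The main obstacle I expect is the careful phase bookkeeping needed to identify $T^{\Omega^1,\Omega^1}_\theta(G)$ with $G_\theta$, together with the parallel consistency check that the $\nablal^G_\theta$ appearing in the statement really is the $\dag$-conjugate of $\nablar^G_\theta$ in the deformed structure, i.e.\ that it coincides with $-\dag\circ\nablar^G_\theta\circ\dag$ using the deformed $\dag$. This amounts to verifying that the deformation procedure intertwines the $\dag$-operations on both sides of the tensor product, which follows from the explicit formulas for the deformed product and adjoint on homogeneous elements recalled at the start of Section \ref{subsec:no-change}.
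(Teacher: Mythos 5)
Your proposal is correct and follows essentially the same route as the paper: observe that $G_\theta=T^{\Omega^1,\Omega^1}_\theta(G)$ (the paper states this without the phase computation you supply), apply Lemma \ref{lem:well-defined-pieces} to rewrite $\nablar^G_\theta\ox_\theta 1+1\ox_\theta\nablal^G_\theta$ as $H_\theta\circ(\nablar^G\ox1+1\ox\nablal^G)\circ(T^{\Omega^1,\Omega^1}_\theta)^{-1}$, and conclude from the classical Hermiticity identity \eqref{eq:conj-pair-Herm}. Your extra phase bookkeeping for $T_\theta(G)=G_\theta$ and the consistency check that $\nablal^G_\theta$ is the deformed $\dag$-conjugate of $\nablar^G_\theta$ are both correct and fill in details the paper leaves implicit.
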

\begin{proof}
Observe that $G_{\theta}=T^{\Omega^{1},\Omega^{1}}_{\theta}(G)$, so that
\begin{align*}
(\nablar_\theta\ox1+1\ox\nablal_\theta)(G_\theta)
&=H_{\theta}(\nablar\ox1+1\ox\nablal)(G)=0.\qedhere
\end{align*}
\end{proof}

We obtain the following description of the deformed exterior derivative.
\begin{lemma} 
\label{lem:dee-theta}
Let $\omega\in\Omega^{1}_{\slashed{D}}(M)$. 
Then $\dee_{\theta}(\omega)=T_{\theta}^{\Omega^1,\Omega^1}(\dee(\omega))$. Moreover 
the braiding and junk projection satisfy $\sigma_\theta=T_{\theta}^{\Omega^1,\Omega^1}\circ\sigma\circ(T_{\theta}^{\Omega^1,\Omega^1})^{-1}$ and $\Psi_\theta=T_{\theta}^{\Omega^1,\Omega^1}\circ\Psi\circ(T_{\theta}^{\Omega^1,\Omega^1})^{-1}$.
\end{lemma}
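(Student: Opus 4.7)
The plan is to first prove the statements about $\sigma_\theta$ and $\Psi_\theta$, and then use the conjugation formula for $\Psi_\theta$ to derive the formula for $\dee_\theta$. The technical engine throughout is that the isomorphism $T_{\theta}^{\Omega^1,\Omega^1}$ from Lemma \ref{lem:tee-theta} merely inserts or removes a single phase factor $\lambda^{-n_2(\cdot)n_1(\cdot)}$ on homogeneous simple tensors, so every identity reduces to careful bookkeeping of phases.

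For the braiding, I would evaluate $T_{\theta}^{\Omega^1,\Omega^1}\circ\sigma\circ (T_{\theta}^{\Omega^1,\Omega^1})^{-1}$ on a homogeneous tensor $\omega\ox_{\theta}\eta\in\Omega^{1}_{\slashed{D}}(M_\theta)\ox_{\theta}\Omega^{1}_{\slashed{D}}(M_\theta)$. Applying the inverse produces $\lambda^{n_2(\omega)n_1(\eta)}\,\omega\ox\eta$; the classical flip $\sigma$ sends this to $\lambda^{n_2(\omega)n_1(\eta)}\,\eta\ox\omega$; applying $T_{\theta}^{\Omega^1,\Omega^1}$ yields
\[
\lambda^{\,n_2(\omega)n_1(\eta)-n_2(\eta)n_1(\omega)}\,\eta\ox_{\theta}\omega .
\]
The exponent is precisely the cocycle $\Theta(\omega,\eta)$ of \eqref{eq: theta-cocycle}, so the composition equals $\sigma_\theta(\omega\ox_\theta\eta)$ by \eqref{eq:sigma-theta}. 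Since $\Psi=(1+\sigma)/2$ and $\Psi_\theta=(1+\sigma_\theta)/2$, and since the identity map trivially intertwines, the conjugation formula for $\Psi_\theta$ is an immediate consequence.

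For the exterior derivative, I would test the identity on spanning forms $\omega=a*[\slashed{D},b]$ with $a,b\in\B$ homogeneous, using the defining formula \eqref{eq: second-order-diff}. On the deformed side, the preimage $a*\delta(b)\in\Omega^{1}_{u}(\B_\theta)$ satisfies $\delta(a*\delta(b))=\delta(a)*\delta(b)$, and $\widehat{\pi}_{\slashed{D},\theta}$ then produces $[\slashed{D},a]\ox_{\theta}[\slashed{D},b]$, giving
\[
\dee_{\theta}(\omega)=(1-\Psi_\theta)\bigl([\slashed{D},a]\ox_{\theta}[\slashed{D},b]\bigr) .
\]
On the undeformed side, write $\omega=\lambda^{n_2(a)n_1(b)}\,a[\slashed{D},b]$ as elements of the common underlying vector space; the classical formula yields $\dee(\omega)=\lambda^{n_2(a)n_1(b)}(1-\Psi)([\slashed{D},a]\ox[\slashed{D},b])$. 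Applying $T_{\theta}^{\Omega^1,\Omega^1}$ and using the already established intertwining $T_{\theta}^{\Omega^1,\Omega^1}\circ(1-\Psi)=(1-\Psi_\theta)\circ T_{\theta}^{\Omega^1,\Omega^1}$ followed by the explicit formula $T_{\theta}^{\Omega^1,\Omega^1}([\slashed{D},a]\ox[\slashed{D},b])=\lambda^{-n_2(a)n_1(b)}[\slashed{D},a]\ox_{\theta}[\slashed{D},b]$ causes the two phases to cancel, and the resulting expression matches $\dee_\theta(\omega)$ computed above.

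The main obstacle is the universal-forms computation on the deformed side: one needs to verify that the derivation $\delta$ on $\Omega^{*}_{u}(\B_\theta)$ obeys a genuine Leibniz rule for the $*$-product and that $\widehat{\pi}_{\slashed{D},\theta}$ sends $\delta(a)*\delta(b)$ to $[\slashed{D},a]\ox_{\theta}[\slashed{D},b]$ without extra phases. Both facts are consequences of $*$ being a Drinfeld-type twist of the original multiplication, and once these are established the rest of the argument is pure phase tracking; in particular no examination of $\ker\pi_{\slashed{D},\theta}$ is needed beyond what is already guaranteed by the well-definedness of $\dee_\theta$.
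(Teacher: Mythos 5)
Your proposal is correct and follows essentially the same route as the paper: both arguments reduce to tracking the single phase $\lambda^{-n_2(\cdot)n_1(\cdot)}$ on homogeneous spanning elements $a*[\slashed{D},b]$, and your phase computations (including the identification of the exponent $n_2(\omega)n_1(\eta)-n_2(\eta)n_1(\omega)$ with the cocycle $\Theta(\omega,\eta)$) all check out. The only difference is organizational — you establish the intertwining of $\Psi$ with $T_\theta^{\Omega^1,\Omega^1}$ first and deduce the formula for $\dee_\theta$ from it, whereas the paper computes $\dee_\theta(a*[\slashed{D},b])$ directly from the explicit $\Theta$-formula for $\Psi_\theta$ and leaves the $\sigma$ and $\Psi$ relations as routine verifications.
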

\begin{proof}
For all $\theta$, the exterior derivative of a form $a*[\D,b]$ with $a,b$ homogenous is given by
\begin{align*}
d_{\theta}(a*[\D,b])=(1-\Psi_{\theta})[\D,a]\ox[\D,b]=\frac{1}{2}\left([\D,a]\ox[\D,b]-\Theta(a,b)[\D,b]\ox[\D,a]\right).
\end{align*}
Now since $a[\D,b]=\lambda^{-n_{2}(a)n_{1}(b)}a*[\D,b]$ and $\lambda^{-n_{2}(a)n_{1}(b)}\Theta(a,b)=\lambda^{-n_{2}(b)n_{1}(a)}$ we obtain
\[
\dee_{\theta}(a[\D,b])=\frac{1}{2}\left(\lambda^{-n_{2}(a)n_{1}(b)}[\D,a]\ox[\D,b]-\lambda^{-n_{2}(b)n_{1}(a)}[\D,b]\ox[\D,a]\right),
\]
and extension by linearity then gives the asserted formula.
The relations for  $\sigma$ and $\Psi$ are straightforward verifications.
\end{proof}

\begin{lemma}
\label{lem:TF-theta}
Let $\nablar^G$ be the undeformed Levi-Civita connection. 
The deformed connection $\nablar^G_\theta$ is torsion-free.
\end{lemma}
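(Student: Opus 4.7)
The plan is to use the equivalent characterisation of torsion-freeness recalled after Theorem \ref{thm:existence}: for an Hermitian right connection, being torsion-free amounts to $(1-\Psi_\theta)\circ\nablar^G_\theta=-\dee_\theta$. Since Lemma \ref{lem:Herm-theta} already establishes that $\nablar^G_\theta$ is Hermitian, it suffices to verify this identity.

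To do so, I would rewrite all three ingredients in terms of their undeformed counterparts via the map $T^{\Omega^1,\Omega^1}_\theta$. By definition of the deformed connection, $\nablar^G_\theta=T^{\Omega^1,\Omega^1}_\theta\circ\nablar^G$. By Lemma \ref{lem:dee-theta}, both the junk projection and the exterior derivative transform by conjugation, i.e.\ $\Psi_\theta\circ T^{\Omega^1,\Omega^1}_\theta=T^{\Omega^1,\Omega^1}_\theta\circ\Psi$ and $\dee_\theta=T^{\Omega^1,\Omega^1}_\theta\circ\dee$ on the common underlying vector space $\Omega^1_{\slashed{D}}(M)$. Therefore
\begin{align*}
(1-\Psi_\theta)\circ\nablar^G_\theta
&=(1-\Psi_\theta)\circ T^{\Omega^1,\Omega^1}_\theta\circ\nablar^G
=T^{\Omega^1,\Omega^1}_\theta\circ(1-\Psi)\circ\nablar^G.
\end{align*}

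Since $\nablar^G$ is the torsion-free Levi-Civita connection of the undeformed manifold (Theorem \ref{thm:LC-class}), we have $(1-\Psi)\circ\nablar^G=-\dee$, and the right-hand side above becomes $-T^{\Omega^1,\Omega^1}_\theta\circ\dee=-\dee_\theta$. Combined with Lemma \ref{lem:Herm-theta} this yields the torsion-free identity for $\nablar^G_\theta$. There is no substantial obstacle: all the preparatory content (Hermiticity via Lemma \ref{lem:Herm-theta}, intertwining of $\Psi$ and $\dee$ under $T^{\Omega^1,\Omega^1}_\theta$ via Lemma \ref{lem:dee-theta}) has been isolated, so the argument reduces to a single conjugation computation.
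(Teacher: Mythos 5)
Your proposal is correct and follows essentially the same route as the paper: both invoke Lemma \ref{lem:Herm-theta} for Hermiticity and then use Lemma \ref{lem:dee-theta} to transport the torsion-free identity through $T^{\Omega^1,\Omega^1}_\theta$. The only cosmetic difference is that you verify $(1-\Psi_\theta)\circ\nablar^G_\theta=-\dee_\theta$ for the right connection, whereas the paper writes the equivalent identity $\dee_\theta=(1-\Psi_\theta)\circ\nablal^G_\theta$ for the conjugate left connection.
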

\begin{proof}
Lemma \ref{lem:Herm-theta} shows that the deformed connections are Hermitian.
Writing $T_\theta=T^{\Omega^1,\Omega^1}_\theta$ and recalling that $\Psi_\theta=T_\theta\circ\Psi\circ T_\theta^{-1}$, we can use Lemma \ref{lem:dee-theta} and the torsion-free property for $\nablal^G$ to see that
\[
\dee_\theta=T_\theta\circ\d
=T_\theta\circ(1-\Psi)\circ\nablal^G=(1-\Psi_\theta)\nablal_\theta^G
\]
whence $\nablar_\theta$ is torsion-free.
\end{proof}

\begin{lemma}
\label{lem:dag-theta}
The deformed adjoints $\dag_{1\ox_\theta1}$ on  $\Omega^1_\theta\ox_{\theta}\Omega^1_\theta$ and $\dag_{2,\theta}$ on $(\Omega^1\ox\Omega^1)_\theta$ respectively are
related by $\dag_{1\ox_\theta1}= T_\theta\circ \dag_{2,\theta}\circ  T_\theta^{-1}$
where $T_\theta=T_\theta^{\Omega^1,\Omega^1}$.
\end{lemma}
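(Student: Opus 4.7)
The plan is a direct computation on homogeneous simple tensors, carefully tracking phases and the antilinearity of $\dag$. First I would record the two relevant adjoint formulas. On $\Omega^1\ox\Omega^1$, the undeformed adjoint is $(\omega_1\ox\omega_2)^\dag=\omega_2^*\ox\omega_1^*$, and for a homogeneous operator $T$ of bidegree $n(T)=(n_1(T),n_2(T))$ the deformed adjoint on $(\Omega^1\ox\Omega^1)_\theta$ is
\[
T^{\dag_{2,\theta}}=\lambda^{n_1(T)n_2(T)}T^{\dag},
\]
applied to $T=\omega_1\ox\omega_2$ with total bidegree $(n_1(\omega_1)+n_1(\omega_2),n_2(\omega_1)+n_2(\omega_2))$. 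On the other side, $\dag_{1\ox_\theta1}$ on $\Omega^1_\theta\ox_\theta\Omega^1_\theta$ is the $\dag$-bimodule adjoint induced from $\dag_\theta$ on each factor, namely
\[
(\omega_1\ox_\theta\omega_2)^{\dag_{1\ox_\theta1}}=\omega_2^{\dag_\theta}\ox_\theta\omega_1^{\dag_\theta}=\lambda^{n_1(\omega_1)n_2(\omega_1)+n_1(\omega_2)n_2(\omega_2)}\,\omega_2^*\ox_\theta\omega_1^*.
\]

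Next I would evaluate $T_\theta\circ\dag_{2,\theta}\circ T_\theta^{-1}$ on a homogeneous $\omega_1\ox_\theta\omega_2$ in three steps. Using $T_\theta^{-1}(\omega_1\ox_\theta\omega_2)=\lambda^{n_2(\omega_1)n_1(\omega_2)}\omega_1\ox\omega_2$, then applying $\dag_{2,\theta}$ (which is \emph{antilinear}, so the scalar $\lambda^{n_2(\omega_1)n_1(\omega_2)}$ is replaced by $\lambda^{-n_2(\omega_1)n_1(\omega_2)}$), and finally applying $T_\theta$ to $\omega_2^*\ox\omega_1^*$ noting that $n(\omega^*)=-n(\omega)$ so that the $T_\theta$-phase is $\lambda^{-n_2(\omega_2)n_1(\omega_1)}$, yields
\[
T_\theta\circ\dag_{2,\theta}\circ T_\theta^{-1}(\omega_1\ox_\theta\omega_2)=\lambda^{E}\,\omega_2^*\ox_\theta\omega_1^*,
\]
with
\[
E=-n_2(\omega_1)n_1(\omega_2)+(n_1(\omega_1)+n_1(\omega_2))(n_2(\omega_1)+n_2(\omega_2))-n_2(\omega_2)n_1(\omega_1).
\]

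Expanding $E$, the cross terms $n_1(\omega_1)n_2(\omega_2)$ and $n_1(\omega_2)n_2(\omega_1)$ cancel the two negative contributions, leaving $E=n_1(\omega_1)n_2(\omega_1)+n_1(\omega_2)n_2(\omega_2)$, which matches the exponent computed for $\dag_{1\ox_\theta1}$. Extending by antilinearity over $\C$ and by $\B_\theta$-linearity via the homogeneous frames supplied by Lemma \ref{lem: deformed-module} completes the proof on all of $\Omega^1_\theta\ox_\theta\Omega^1_\theta$.

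The only real obstacle is the bookkeeping of phase factors; the two potentially confusing points are (i) that $\dag$ is antilinear, so the phase introduced by $T_\theta^{-1}$ must be complex-conjugated (i.e.\ the exponent negated, since $\lambda\in\mathbb T$), and (ii) that the bidegree of $\omega^*$ is $-n(\omega)$, which determines the sign in the $T_\theta$-phase applied at the last step. Once these sign conventions are handled correctly, the identity reduces to the elementary algebraic identity $E=n_1(\omega_1)n_2(\omega_1)+n_1(\omega_2)n_2(\omega_2)$ displayed above.
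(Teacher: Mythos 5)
Your computation is correct and follows essentially the same route as the paper: evaluate on homogeneous elementary tensors, track the phase factors through $T_\theta^{-1}$, $\dag_{2,\theta}$ (using its antilinearity and $n(\omega^*)=-n(\omega)$), and $T_\theta$, and observe that the cross terms cancel to leave $\lambda^{n_1(\omega_1)n_2(\omega_1)+n_1(\omega_2)n_2(\omega_2)}$, matching $\dag_{1\ox_\theta 1}$. The paper merely presents the identical phase bookkeeping as a commuting square rather than as a single composition.
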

\begin{proof}
We check that the diagram 
\[
\xymatrix{\Omega^1_\theta\ox_{\theta}\Omega^1_\theta\ar[rr]^{ T_\theta^{-1}}\ar[d]_{\dag_{1\ox_\theta1}}&& (\Omega^1\ox\Omega^1)_\theta\ar[d]_{\dag_{2,\theta}}\\
\Omega^1_\theta\ox_{\theta}\Omega^1_\theta\ar[rr]_{ T_\theta^{-1}} &&(\Omega^1\ox\Omega^1)_\theta}
\]
commutes by computing on elementary tensors of homogeneous one-forms. Recalling that 
\[
\dag_{2,\theta}(\omega\ox\rho)=\lambda^{(n_2(\omega)+n_2(\rho))(n_1(\omega)+n_1(\rho))}\rho^*\ox\omega^*
=\lambda^{(n_2(\omega)+n_2(\rho))(n_1(\omega)+n_1(\rho))}\dag_2(\omega\ox\rho),
\] 
we have
\[
\xymatrix{\omega\ox_\theta\rho\ar[rr]^{ T_\theta^{-1}}\ar[d]_{\dag_{1\ox_\theta1}}&& \lambda^{n_2(\omega)n_1(\rho)}\omega\ox\rho\ar[d]_{\dag_{2,\theta}}\\
\lambda^{n_2(\rho)n_1(\rho)+n_2(\omega)n_1(\omega)}\rho^*\ox_\theta\omega^*\ar[rr]_{ T_\theta^{-1}\qquad} &&\lambda^{-n_2(\omega)n_1(\rho)+(n_2(\rho)+n_2(\omega))(n_1(\rho)+n_1(\omega))}\rho^*\ox\omega^*}
\]
So $\dag_{1\ox_\theta1}= T_\theta\circ \dag_{2,\theta}\circ  T_\theta^{-1}$
\end{proof}

\begin{thm}
\label{prop:deformed-uniqueness}
The Levi-Civita connection $\nablar^{G_\theta}$ for $C^{\infty}(M_\theta)$
is the deformation $\nablar^G_\theta$ of the Levi-Civita connection $\nablar^G$ for $C^{\infty}(M)$.
\end{thm}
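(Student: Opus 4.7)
The plan is to invoke the uniqueness part of Theorem \ref{thm:theta-unique}: since $(\Omega^{1}_{\slashed{D}}(M_{\theta}),\dag,\Psi_{\theta},\pairing{\cdot}{\cdot}_{\theta})$ admits a \emph{unique} Hermitian, torsion-free, $\sigma_\theta$-$\dag$-bimodule connection, it suffices to show that the candidate $\nablar^G_\theta$ obtained by deforming the classical Levi-Civita connection verifies each of these four properties. Two of the four are already dispatched: Lemma \ref{lem:Herm-theta} gives that $\nablar^G_\theta$ is Hermitian, and Lemma \ref{lem:TF-theta} establishes that it is torsion-free with respect to $\dee_\theta$ and $\Psi_\theta$. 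Thus the remaining tasks are to check the $\sigma_\theta$-bimodule property $\sigma_\theta\circ\nablar^G_\theta=\nablal^G_\theta$ and the $\dag$-compatibility $\nablal^G_\theta=-\dag\circ\nablar^G_\theta\circ\dag$.

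Both remaining checks will follow from the transport identities already assembled, essentially by a diagram chase. Writing $T_\theta=T_\theta^{\Omega^1,\Omega^1}$, Lemma \ref{lem:dee-theta} gives $\sigma_\theta=T_\theta\circ\sigma\circ T_\theta^{-1}$ while by definition $\nablar^G_\theta=T_\theta\circ\nablar^G$ and $\nablal^G_\theta=T_\theta\circ\nablal^G$. Since the classical Levi-Civita connection is a $\sigma$-bimodule connection, $\sigma\circ\nablar^G=\nablal^G$, and hence
\begin{equation*}
\sigma_\theta\circ\nablar^G_\theta
=T_\theta\circ\sigma\circ T_\theta^{-1}\circ T_\theta\circ\nablar^G
=T_\theta\circ\nablal^G
=\nablal^G_\theta,
\end{equation*}
which yields the $\sigma_\theta$-bimodule condition. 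For the $\dag$-compatibility, Lemma \ref{lem:dag-theta} says that $\dag_{1\ox_\theta 1}=T_\theta\circ\dag_{2,\theta}\circ T_\theta^{-1}$, and since $\dag$ intertwines $\nablar^G$ and $\nablal^G$ in the undeformed setting, conjugation by $T_\theta$ (together with the observation that $T_\theta$ intertwines the undeformed and the algebraically deformed tensor products) transports this relation to the deformed setting.

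Having verified all four properties, Theorem \ref{thm:theta-unique} forces $\nablar^G_\theta=\nablar^{G_\theta}$. I expect the main obstacle to be purely notational: one must be careful to distinguish the deformed tensor product $\ox_\theta$ from the undeformed tensor product underlying $T_\theta$, and to keep track of which $\dag$ operation one is using (Lemma \ref{lem:dag-theta} is precisely set up to handle this). Once this bookkeeping is in place, each verification reduces to an immediate application of Lemmas \ref{lem:well-defined-pieces}--\ref{lem:dag-theta} and the corresponding classical property of $\nablar^G$.
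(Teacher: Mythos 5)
Your proposal is correct and follows essentially the same route as the paper: invoke the uniqueness of Theorem \ref{thm:theta-unique}, cite Lemmas \ref{lem:Herm-theta} and \ref{lem:TF-theta} for the Hermitian and torsion-free properties, and verify the $\sigma_\theta$-$\dag$-bimodule condition by conjugating the classical one with $T_\theta$ via Lemmas \ref{lem:dee-theta} and \ref{lem:dag-theta}. The only difference is presentational: the paper carries out your two remaining checks as a single computation showing $\nablar^G_\theta(\omega)=-\sigma_\theta\circ\dag_{1\ox_\theta 1}\circ\nablar^G_\theta(\omega^\dag)$, in which the one genuinely non-formal step is the explicit cancellation of the phases $\lambda^{\pm n_1(\omega)n_2(\omega)}$ arising from the deformed adjoints on one-forms and two-tensors — exactly the bookkeeping you flagged as the main obstacle.
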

\begin{proof}
We make use of Theorem \ref{thm:theta-unique} and prove that the deformed connection 
$\nablar^G_\theta$ is Hermitian, torsion-free and a $\dag$-bimodule connection for the braiding $\sigma_\theta$. By uniqueness of the Levi-Civita connection, this will show that $\nablar^{G_\theta}=\nablar^G_\theta$. Lemmas \ref{lem:Herm-theta} and \ref{lem:TF-theta} showed that $\nablar^G_\theta$ is Hermitian and torsion-free, so we need only prove that it is a $\dag$-bimodule connection.

To show that $\nablar^G_\theta$ is a $\dag$-bimodule connection, it suffices to prove that
\[
\nablar_\theta^G(\omega)=-\sigma_\theta\circ \dag_{1\ox_\theta1}\circ\nablar^G_\theta(\omega^\dag).
\]
Using Lemma \ref{lem:dag-theta} and (writing $T_\theta=T_\theta^{\Omega^1,\Omega^1}$) $\sigma_\theta=T_\theta\circ\sigma\circ T_\theta^{-1}$ we have 
\begin{align*}
-\sigma_\theta\circ \dag_{1\ox_\theta1}\circ\nablar^G_\theta\circ\dag_{1,\theta}(\omega)
&=-\sigma_\theta\circ \dag_{2,\theta}\lambda^{n_2(\omega)n_1(\omega)}\circ\nablar^G_\theta(\omega^*)\\
&=-T_\theta\circ \sigma\circ \lambda^{-n_2(\omega)n_1(\omega)}\dag_{2,\theta}\circ \nablar^G(\omega^*)\\
&=-T_\theta( \sigma\circ \lambda^{-n_2(\omega)n_1(\omega)}\lambda^{n_2(\omega)n_1(\omega)}\dag_{2}\circ \nablar^G(\omega^*))\\
&=-T_\theta( \sigma\circ \dag_{2}\circ \nablar^G(\omega^*))\\
&=T_\theta(\nablar^G(\omega))\\
&=\nablar^G_\theta(\omega),
\end{align*}
since $\nablar^G$ is a $\dag$-bimodule connection. Hence $\nablar^G_\theta$ is the unique Hermitian torsion-free $\dag$-bimodule connection, and hence agrees with $\nablar^{G_\theta}$.
\end{proof}

In the case of free actions of $\mathbb{T}^2$, \cite{BGJ2,BGJ1} defined the Levi-Civita connection directly as the deformed connection and proved existence and uniqueness. Hence
Theorem \ref{prop:deformed-uniqueness} shows that our Levi-Civita connection agrees with theirs. 

\subsection{Invariance of the scalar curvature}

We now proceed to show that the scalar curvature $r_{\theta}\in C^{\infty}(M_{\theta})$ remains undeformed in the sense that $r_{\theta}=r_{0}=r$ in the linear space $C^{\infty}(M)$. We also show that the full curvature tensor and Ricci tensor transform naturally under deformation.
First we record a lemma about contractions with $G_\theta$.
\begin{lemma}
\label{lem:contraction-4}
Let $\omega,\rho,\eta,\tau\in \Omega^1$ be homogeneous with $n(\omega)+n(\rho)+n(\eta)+n(\tau)=0$. Then
\[
\leftindex_{\theta}{\pairing{H_\theta(\omega\ox\rho\ox\eta)\ox_\theta\tau}{G_\theta}}=T_\theta^{\Omega^1,\Omega^1}(\omega\ox\rho)\leftindex_{0}{\pairing{\eta\ox\tau}{G}}.
\]
\end{lemma}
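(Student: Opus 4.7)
The plan is to parallel the obvious undeformed identity
\[
\leftindex_{0}{\pairing{\omega\ox\rho\ox\eta\ox\tau}{G}}=(\omega\ox\rho)\leftindex_{0}{\pairing{\eta\ox\tau}{G}},
\]
and then carefully track the phase corrections that arise when every tensor product, inner product, and module action is replaced by its $\theta$-deformation. The degree-zero hypothesis will be what makes the corrections collapse.

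Concretely, I would first use the proof of Lemma \ref{lem:map-assoc} to expand the inner left factor explicitly as
\[
H_\theta(\omega\ox\rho\ox\eta)\ox_\theta\tau=\lambda^{-n_2(\omega)(n_1(\rho)+n_1(\eta))-n_2(\rho)n_1(\eta)}\,\omega\ox_\theta\rho\ox_\theta\eta\ox_\theta\tau.
\]
Next, invoking the identity $G_\theta=T_\theta^{\Omega^1,\Omega^1}(G)$ from the proof of Lemma \ref{lem:Herm-theta} together with the defining phase $\leftindex_\theta{\pairing{x}{y}}=\lambda^{(n_2(y)-n_2(x))n_1(y)}\leftindex_{\B}{\pairing{x}{y}}$ of the deformed left inner product, I would contract the last two slots against $G_\theta$. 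This rewrites the LHS as a single phase factor times
\[
(\omega\ox_\theta\rho)\cdot\leftindex_{0}{\pairing{\eta\ox\tau}{G}},
\]
where the multiplication is the deformed right $\B_\theta$-action on $\Omega^1_\theta\ox_\theta\Omega^1_\theta$.

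The main obstacle will be the patient bookkeeping of four sources of phases: (i) the phase from $H_\theta$; (ii) the phase from rewriting $G_\theta$ via $T_\theta$; (iii) the phase from $\leftindex_\theta{\pairing{\cdot}{\cdot}}$ acting on the relevant homogeneous tensors; and (iv) the phase arising from the deformed right action of $\leftindex_{0}{\pairing{\eta\ox\tau}{G}}$, an element which the equivariance of the inner product forces to have $\mathbb{T}^2$-degree $-(n(\omega)+n(\rho))$ by hypothesis. The relation $n(\omega)+n(\rho)+n(\eta)+n(\tau)=0$ has to be substituted in repeatedly to cancel mixed cross-terms between the four sources. If the calculation is correct, the combined exponent of $\lambda$ reduces to $-n_2(\omega)n_1(\rho)$, which is exactly the phase appearing in $T_\theta^{\Omega^1,\Omega^1}(\omega\ox\rho)=\lambda^{-n_2(\omega)n_1(\rho)}\omega\ox_\theta\rho$, matching the RHS. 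Since all four forms are homogeneous by hypothesis, no extension-by-linearity argument is needed.
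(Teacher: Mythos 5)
Your proposal follows essentially the same route as the paper's proof: there too one peels the phase off $H_\theta$ via its factorisation through $T^{\Omega^1\ox\Omega^1,\Omega^1}_\theta$, contracts the last two legs against $G_\theta=T^{\Omega^1,\Omega^1}_\theta(G)$ using the module property of the left inner product (which produces exactly your deformed right action of $\leftindex_{0}{\pairing{\eta\ox\tau}{G}}$, of degree $-(n(\omega)+n(\rho))$), and then cancels the accumulated exponent using the degree-zero hypothesis. The only difference is presentational: the paper organises the bookkeeping through a chain of structural identities such as $\leftindex_{\theta}{\pairing{\eta\ox_\theta\tau}{G_\theta}}=\leftindex_{\theta}{\pairing{\eta}{\tau^\dag}}$ rather than one explicit $\lambda$-exponent, so to finish your version you need only carry out the exponent computation you defer with ``if the calculation is correct.''
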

\begin{proof}
This is just a computation. With $\omega,\rho,\eta,\tau$ as in the statement we have
\begin{align*}
\leftindex_{\theta}{\pairing{H_\theta(\omega\ox\rho\ox\eta)\ox_\theta\tau}{G_\theta}}
&=\leftindex_{\theta}{\pairing{T^{\Omega^1,\Omega^1}_\theta\circ T^{\Omega^1\ox\Omega^1,\Omega^1}_\theta(\omega\ox\rho\ox\eta)\ox_\theta\tau}{G_\theta}}\\
&=\leftindex_{\theta}{\pairing{T^{\Omega^1,\Omega^1}_\theta(\omega\ox\rho)\ox_\theta\eta\ox_\theta\tau}{G_\theta}}\lambda^{-(n_2(\omega)+n_2(\rho))n_1(\eta)}\\
&=T^{\Omega^1,\Omega^1}_\theta\big((\omega\ox\rho)*\leftindex_{\theta}{\pairing{\eta\ox_\theta\tau^\dag}{G_\theta}}\big)\lambda^{-(n_2(\omega)+n_2(\rho))n_1(\eta)}\\
&=T^{\Omega^1,\Omega^1}_\theta\big((\omega\ox\rho)\leftindex_{\theta}{\pairing{\eta\ox_\theta\tau}{G_\theta}}\big)\lambda^{(n_2(\omega)+n_2(\rho))n_1(\tau)}\\
&=T^{\Omega^1,\Omega^1}_\theta\big((\omega\ox\rho)\leftindex_{\theta}{\pairing{\eta}{\tau^\dag}}\big)\lambda^{(n_2(\omega)+n_2(\rho))n_1(\tau)}\\
&=T^{\Omega^1,\Omega^1}_\theta\big((\omega\ox\rho)\leftindex_{0}{\pairing{\eta}{\tau^\dag}}\big)\lambda^{(n_2(\omega)+n_2(\rho)+n_2(\tau)+n_2(\eta))n_1(\tau)}\\
&=T_\theta^{\Omega^1,\Omega^1}(\omega\ox\rho)\leftindex_{0}{\pairing{\eta\ox\tau}{G}}
\end{align*}
where in the last step we used the assumption on the degrees of the one-forms.
\end{proof}

\begin{thm}
\label{thm:curv-theta} 
For a one-form $\rho\in\Omega^1_{\slashed{D}}(M)=\Omega^1_{\slashed{D}}(M_\theta)$ we have
\[
R^{\nablal^{G_\theta}}(\rho)=H_{\theta}(R^{\nablal^G}(\rho)),
\quad\quad
{\rm Ric}^{\nablal^{G_\theta}}=T^{\Omega^1,\Omega^1}_\theta({\rm Ric}^{\nablal^G})
\quad\mbox{and}\quad
r^{\nablal^{G_\theta}}=r^{\nablal^G}.
\]
\end{thm}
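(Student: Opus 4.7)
The overall plan rests on Theorem \ref{prop:deformed-uniqueness}: since $\nablal^{G_\theta}=\nablal^G_\theta=T^{\Omega^1,\Omega^1}_\theta\circ\nablal^G$, the deformed curvature can be obtained by pushing the deformation operators through the undeformed curvature formula
$R^{\nablal^G}(\rho)=(1-\Psi)\ox 1\circ(1\ox\nablal^G-\dee\ox 1)\circ\nablal^G(\rho)$. Concretely, I would first establish an analogue of Lemma \ref{lem:well-defined-pieces} for the pair $(1\ox\nablal^G,\dee\ox 1)$, namely the identity
\[
1\ox_\theta\nablal^{G_\theta}-\dee_\theta\ox_\theta 1=H_\theta\circ(1\ox\nablal^G-\dee\ox 1)\circ(T^{\Omega^1,\Omega^1}_\theta)^{-1}
\]
as maps $\Omega^1_\theta\ox_\theta\Omega^1_\theta\to\Omega^1_\theta\ox_\theta\Omega^1_\theta\ox_\theta\Omega^1_\theta$. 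This uses Lemma \ref{lem:dee-theta} for $\dee_\theta$ and the same homogeneous-degree bookkeeping that underlies the proof of Lemma \ref{lem:well-defined-pieces}. Combining this with the identity $\Psi_\theta=T^{\Omega^1,\Omega^1}_\theta\circ\Psi\circ(T^{\Omega^1,\Omega^1}_\theta)^{-1}$ from Lemma \ref{lem:dee-theta} and verifying that $(1-\Psi_\theta)\ox_\theta 1\circ H_\theta=H_\theta\circ(1-\Psi)\ox 1$ (a direct check on homogeneous simple tensors, since $H_\theta$ permutes leg-indexed phases while $\Psi$ acts only on the first two legs) gives the first identity $R^{\nablal^{G_\theta}}(\rho)=H_\theta(R^{\nablal^G}(\rho))$.

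For the Ricci tensor, the curvature lives naturally in $\Omega^1\ox\Lambda^2\ox\Omega^1\subset\Omega^1\ox\Omega^1\ox\Omega^1\ox\Omega^1$, and ${\rm Ric}^{\nablal^G}$ is obtained by contracting the last two factors against $G$ using the left inner product. Writing $R^{\nablal^G}(\rho)$ (or rather its 4-tensor lift) as a finite sum of homogeneous simple tensors $\omega\ox\eta\ox\tau_1\ox\tau_2$ whose total degree coincides with $n(\rho)$ after anti-symmetrization in the middle pair, and applying the first identity $R^{\nablal^{G_\theta}}(\rho)=H_\theta(R^{\nablal^G}(\rho))$, Lemma \ref{lem:contraction-4} directly yields
\[
{\rm Ric}^{\nablal^{G_\theta}}(\rho)={}_\theta\!\bigl\langle H_\theta(\omega\ox\eta\ox\tau_1)\ox_\theta\tau_2\,\bigl|\,G_\theta\bigr\rangle=T^{\Omega^1,\Omega^1}_\theta(\omega\ox\eta)\,{}_0\!\bigl\langle\tau_1\ox\tau_2\,\bigl|\,G\bigr\rangle=T^{\Omega^1,\Omega^1}_\theta({\rm Ric}^{\nablal^G}(\rho)).
\]
The scalar curvature is a further contraction of ${\rm Ric}$ against $G$ using the right inner product. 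A second application of the same type of calculation (with $T_\theta({\rm Ric}^{\nablal^G})$ now a two-tensor of total degree zero on each homogeneous component) collapses the $T_\theta$ factor entirely: the relevant phase is $\lambda^{(n_1(\omega)-n_1(\eta))n_2(\omega)}$ from the deformed inner product \eqref{eq: theta-inner-product} acting on something with $n_1(\omega)+n_1(\eta)=0=n_2(\omega)+n_2(\eta)$, which is trivial. This gives $r^{\nablal^{G_\theta}}=r^{\nablal^G}$ as elements of the common underlying vector space $C^\infty(M)$.

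The main obstacle I anticipate is the homogeneity accounting at each contraction: Lemma \ref{lem:contraction-4} requires the total degree of the four-tensor being contracted to vanish, and one must verify that the tensors arising from $R^{\nablal^G}(\rho)$ and ${\rm Ric}^{\nablal^G}(\rho)$ meet this condition after a contraction with $G_\theta$. Since $G_\theta$ itself is a sum of homogeneous pairs $\omega_j\ox_\theta\omega_j^\dag$ of opposite degrees, and Ricci/scalar are obtained by full contraction with $G_\theta$, the degree constraint reduces to the observation that only degree-zero homogeneous components of $\rho$ contribute to the scalar, or more precisely that after one (resp.\ two) full contractions the resulting tensor automatically has the requisite total degree. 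A careful check of the Sweedler-indexed homogeneous decomposition of $R^{\nablal^G}(\rho)$ — writing it explicitly via Proposition \ref{prop:module-curve} in a homogeneous frame — makes this transparent, but is the one step requiring attention to detail.
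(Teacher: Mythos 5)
Your proposal is correct and follows essentially the same route as the paper: identify $\nablal^{G_\theta}$ with the deformed connection via Theorem \ref{prop:deformed-uniqueness}, intertwine the second covariant derivative and $(1-\Psi)\ox 1$ with $H_\theta$ (the paper reuses Lemma \ref{lem:well-defined-pieces} together with torsion-freeness where you re-derive the analogous identity for $1\ox\nablal^G-\dee\ox1$ directly, a cosmetic difference), then contract with $G_\theta$ via Lemma \ref{lem:contraction-4} and the isometry of $T_\theta$. The degree bookkeeping you flag as the delicate point is resolved exactly as you suggest: the full four-tensor $\sum_j R^{\nablal^G}(\omega_j)\ox\omega_j^\dag$ is homogeneous of total degree zero because $\nablar^G,\nablal^G,\dee$ are degree-zero maps and the frame pairs $\omega_j\ox\omega_j^\dag$ have opposite degrees, which is the one-line observation the paper makes before invoking Lemma \ref{lem:contraction-4}.
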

\begin{proof}
We saw in Lemma \ref{lem:well-defined-pieces} that
\[
(\nablar^{G_\theta}\ox1+1\ox\nablal^{G_\theta})\circ\nablal^{G_\theta}=H_{\theta}((\nablar^{G}\ox1+1\ox\nablal^{G})\circ\nablal^{G})_\theta).
\]
Since $(1-P_\theta)=(T^{\Omega^1,\Omega^1}_\theta(1-\Psi)(T^{\Omega^1,\Omega^1}_\theta)^{-1})\ox1$, one now checks directly that
\begin{align*}
R^{\nablal^{G_\theta}}(\rho)&=(1-P_\theta)(\nablar^{G_\theta}\ox1+1\ox\nablal^{G_\theta})\circ\nablal^{G_\theta}(\rho)\\
&=H_\theta((1-P)(\nablar^{G}\ox1+1\ox\nablal^{G})\circ\nablal^{G}(\rho))\\
&=H_\theta(R^{\nablal^G}(\rho)).
\end{align*}

For the Ricci curvature, we write 
\[
R^{\nablal^{G_\theta}}(\omega_{j})\ox_\theta\omega_j^\dag=H_\theta(R(\omega_j))\ox_\theta\omega_j^\dag.
\]
Since $\nablar^G,\nablal^G,\d$ are all degree zero, we see that $R^{\nablal^{G_\theta}}(\omega_{j})\ox_\theta\omega_j^\dag$ is degree zero.
We can now compute the Ricci curvature of $\nablal^{G_\theta}$ using Lemma \ref{lem:contraction-4}
\begin{align*}
{\rm Ric}^{\nablal^{G}_\theta}
&=\leftindex_{\theta}{\pairing{R^{\nablal^{G}_\theta}}{G_\theta}}
=\leftindex_{\theta}{\pairing{H_\theta(R^{\nablal^{G}}(\omega_j))\ox_\theta\omega_j^\dag}{G_\theta}}
=T_\theta^{\Omega^1,\Omega^1}({\rm Ric}^{\nablal^G}).
\end{align*}
Finally the scalar curvature is given by
\[
r^{\nablal^{G_\theta}}
=\pairing{G_\theta}{{\rm Ric}^{\nablal^{G_\theta}}}_\theta
=\pairing{T_\theta^{\Omega^1,\Omega^1}(G)}{T_\theta^{\Omega^1,\Omega^1}({\rm Ric}^{\nablal^G})}_\theta
=\pairing{G}{{\rm Ric}^{\nablal^G}}_0=r^{\nabla^G}.\qedhere
\]
\end{proof}

\subsection{Dirac spectral triple and Weitzenbock formula}
To establish the Weitzenbock formula for $\theta$-deformations of manifolds, we now consider the deformation of Clifford connections. 

Recall that for a $\theta$-deformed Dirac bundle we have the maps
$m_{\theta}:T^{2}_{\slashed{D}}(M_{\theta})\to \mathbb{B}(L^{2}(M,\slashed{S}))$, $c_{\theta}:\Omega^{1}_{\slashed{D}}(M)\ox_{C^{\infty}(M)}\Gamma(M,\slashed{S})\to \Gamma(M,\slashed{S})$, $\sigma_\theta:T^2_{\slashed{D}}(M_\theta)\to T^2_{\slashed{D}}(M_\theta)$
and  $g_{\theta}:T^{2}_{\slashed{D}}(M_{\theta})\to C^{\infty}(M_{\theta})$. For later computations we describe how these maps interact with $T_\theta$ and $H_\theta$ of Lemmas \ref{lem:tee-theta} and \ref{lem:map-assoc}.

\begin{lemma}
\label{lem: invariant-c}
Let $\slashed{S}\to M$ be a $\mathbb{T}^{2}$-equivariant Dirac bundle over the compact Riemannian manifold $(M,g)$, $(C^{\infty}(M),L^{2}(M,\slashed{S}), \slashed{D})$ the associated $\mathbb{T}^{2}$-equivariant Dirac spectral triple. 
  With $T^{\Omega^{1},\Omega^{1}}_{\theta},T^{\Omega^{1},\X}_{\theta}$ and $H_{\theta}=H^{\Omega^{1},\Omega^{1},\X}_{\theta}$ as in Lemmas \ref{lem:tee-theta} and \ref{lem:map-assoc} we have
\begin{enumerate}[noitemsep]
\item $m=m_{\theta}\circ T_{\theta}^{\Omega^{1},\Omega^{1}}$;
\item $g=g_{\theta}\circ T_{\theta}^{\Omega^{1},\Omega^{1}}$;
\item $c=c_{\theta}\circ T^{\Omega^{1},\X}_{\theta}$;
\item $(1\ox_{\theta} c_{\theta})\circ H_{\theta}=T^{\Omega^{1},\X_{\theta}}\circ (1\ox c)$;
\item $\sigma_{\theta}\ox_{\theta} 1=H_{\theta}(\sigma\ox 1)H_{\theta}^{-1}$.
\end{enumerate}
\end{lemma}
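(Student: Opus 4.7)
My plan is to verify each of the five identities by direct computation on elementary tensors of homogeneous elements, exploiting the explicit phase formulas from Lemmas \ref{lem:tee-theta} and \ref{lem:map-assoc}. The pattern throughout is that the $\lambda$-phases introduced by $T_\theta$ (or $H_\theta$) cancel against the $\lambda$-phases introduced when passing from the undeformed product to the $\theta$-deformed product of $m$, $g$, $c$, $\sigma$; extending by linearity then closes each statement. The structural input is that $m$, $g$, $c$, $\sigma$ are all degree-preserving $\mathbb{C}$-linear maps.

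For (1)--(3), the plan is as follows. Writing $\omega,\eta \in \Omega^1$ homogeneous, the Clifford multiplication $m_\theta$ is by definition $m_\theta(\omega \ox_\theta \eta)=\omega*\eta=\lambda^{n_2(\omega)n_1(\eta)}\omega\eta$, whereas $T^{\Omega^1,\Omega^1}_\theta(\omega\ox\eta)=\lambda^{-n_2(\omega)n_1(\eta)}\omega\ox_\theta\eta$; the phases cancel, giving (1). For (2) one uses the same phase cancellation together with \eqref{eq: theta-inner-product} and the definition $g(\omega\ox\eta)=-\pairing{\omega^\dag}{\eta}_\B$. For (3) the Clifford action $c_\theta$ is defined by the deformed module action $c_\theta(\omega\ox_\theta x)=\omega*x$, and again the phase from $T^{\Omega^1,\X}_\theta$ cancels the phase from the deformed action.

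For (4), the plan is to use the factorization $H_\theta=(1\ox_\theta T^{\Omega^1,\X}_\theta)\circ T^{\Omega^1,\Omega^1\ox\X}_\theta$ established in Lemma \ref{lem:map-assoc}. Applying (3) to the inner factor, $(1\ox_\theta c_\theta)\circ(1\ox_\theta T^{\Omega^1,\X}_\theta)=1\ox_\theta c$ on the relevant deformed module, so
\[
(1\ox_\theta c_\theta)\circ H_\theta=(1\ox_\theta c)\circ T^{\Omega^1,\Omega^1\ox\X}_\theta.
\]
Since $c$ preserves total $\mathbb{T}^2$-degree, the naturality of $T_\theta$ in its second argument (a consequence of the formula in Lemma \ref{lem:tee-theta}) gives $(1\ox_\theta c)\circ T^{\Omega^1,\Omega^1\ox\X}_\theta = T^{\Omega^1,\X}_\theta\circ (1\ox c)$, as required.

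For (5), the plan is to use the other factorization $H_\theta=(T^{\Omega^1,\Omega^1}_\theta\ox_\theta 1)\circ T^{\Omega^1\ox\Omega^1,\X}_\theta$, conjugate $\sigma\ox 1$ through these two pieces, and then invoke Lemma \ref{lem:dee-theta} which already states $\sigma_\theta=T^{\Omega^1,\Omega^1}_\theta\circ\sigma\circ(T^{\Omega^1,\Omega^1}_\theta)^{-1}$. Since $\sigma$ preserves total degree, naturality of $T^{\Omega^1\ox\Omega^1,\X}_\theta$ converts $T^{\Omega^1\ox\Omega^1,\X}_\theta\circ(\sigma\ox 1)\circ (T^{\Omega^1\ox\Omega^1,\X}_\theta)^{-1}$ into $\sigma\ox_\theta 1$ on $(\Omega^1\ox\Omega^1)_\theta\ox_\theta \X_\theta$; the outer conjugation by $T^{\Omega^1,\Omega^1}_\theta\ox_\theta 1$ then produces $\sigma_\theta\ox_\theta 1$ by Lemma \ref{lem:dee-theta}.

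None of the steps presents a real obstacle; the only delicate point is bookkeeping of the four-index phase $\lambda^{n_2(\cdot)n_1(\cdot)}$, which appears repeatedly with opposite signs. The main thing to watch in (5) is to not mistakenly carry a $\Theta$-factor through $\sigma$ (which is the undeformed flip, not $\sigma_\theta$); once that is kept straight, the residual exponent collapses to $n_2(\omega)n_1(\eta)-n_2(\eta)n_1(\omega)$, i.e.\ precisely $\Theta(\omega,\eta)$.
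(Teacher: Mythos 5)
Your proposal is correct and matches the paper's approach: the paper simply asserts these are ``straightforward verifications using the definitions,'' and your phase-cancellation computations on homogeneous elementary tensors (together with the factorizations of $H_\theta$ and the degree-preservation of $m$, $g$, $c$, $\sigma$) are exactly the verifications intended. The residual exponent $n_2(\omega)n_1(\eta)-n_2(\eta)n_1(\omega)$ you identify in item (5) does indeed reproduce the factor $\Theta(\omega,\eta)$ defining $\sigma_\theta$, so the bookkeeping closes as claimed.
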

\begin{proof} These are all straightforward verifications using the definitions.
\end{proof}

\begin{prop} The $\theta$-deformed Clifford connection condition
\begin{equation}
\label{eq: theta-Clifford-connection}
\nablal^{\slashed{S}}_{\theta}\circ c_{\theta}=(1\ox_{\theta}c_{\theta})(\sigma_{\theta}\ox_{\theta}1)(1\ox_{\theta}\nablal^{\slashed{S}}_{\theta}+\nablar^{G_{\theta}}\ox_{\theta}1),
\end{equation}
holds on the module $\X$.
\end{prop}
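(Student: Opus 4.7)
The strategy is to reduce the $\theta$-deformed Clifford condition to the undeformed one (Condition 4 of Definition \ref{def: Dirac-spectral-triple}), using the intertwining relations between the deformation maps $T_\theta^{\bullet,\bullet}$, $H_\theta$ and all the relevant structure maps. The undeformed condition
\[
\nablal^{\slashed{S}}\circ c=(1\ox c)(\sigma\ox 1)(1\ox\nablal^{\slashed{S}}+\nablar^{G}\ox 1)
\]
holds on $\X$ by assumption (the classical Dirac bundle is a Clifford module for the Levi-Civita connection).

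The key identifications are: by definition $\nablal^{\slashed{S}}_\theta=T_\theta^{\Omega^1,\X}\circ\nablal^{\slashed{S}}$, and by Theorem \ref{prop:deformed-uniqueness} we have $\nablar^{G_\theta}=\nablar^G_\theta=T_\theta^{\Omega^1,\Omega^1}\circ\nablar^G$. The first step is to observe that, by the same computation as Lemma \ref{lem:well-defined-pieces} (which did not use the torsion-freeness or Hermitian nature of $\nablal^G$, only that it is a connection), we get
\[
\nablar^{G_\theta}\ox_\theta 1+1\ox_\theta\nablal^{\slashed{S}}_\theta
=H_\theta\circ(\nablar^G\ox 1+1\ox\nablal^{\slashed{S}})\circ(T_\theta^{\Omega^1,\X})^{-1}.
\]

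With this in hand, the proof is a chain of substitutions on the right hand side of \eqref{eq: theta-Clifford-connection}. Using item 5 of Lemma \ref{lem: invariant-c}, $\sigma_\theta\ox_\theta 1=H_\theta(\sigma\ox 1)H_\theta^{-1}$ cancels the leftmost $H_\theta$ above, leaving $(1\ox_\theta c_\theta)H_\theta(\sigma\ox 1)(\nablar^G\ox 1+1\ox\nablal^{\slashed{S}})(T_\theta^{\Omega^1,\X})^{-1}$. Then item 4 converts $(1\ox_\theta c_\theta)\circ H_\theta$ into $T_\theta^{\Omega^1,\X}\circ(1\ox c)$, so the undeformed Clifford connection condition applies and collapses $(1\ox c)(\sigma\ox 1)(\nablar^G\ox 1+1\ox\nablal^{\slashed{S}})$ to $\nablal^{\slashed{S}}\circ c$. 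Finally, item 3 rewrites $c\circ(T_\theta^{\Omega^1,\X})^{-1}$ as $c_\theta$, and the remaining $T_\theta^{\Omega^1,\X}\circ\nablal^{\slashed{S}}$ is $\nablal^{\slashed{S}}_\theta$ by definition, yielding $\nablal^{\slashed{S}}_\theta\circ c_\theta$ as desired.

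The main obstacle is not conceptual but notational: one must carefully verify the deformation of the sum $\nablar^G\ox 1+1\ox\nablal^{\slashed{S}}$ (the analogue of Lemma \ref{lem:well-defined-pieces} with $\nablal^G$ replaced by the Clifford connection $\nablal^{\slashed{S}}$), and one must check that when composing with the various $T_\theta$ and $H_\theta$ the degree bookkeeping is consistent on homogeneous elements — but this all follows directly from the definitions of the deformed tensor products and the explicit phase factors in Lemmas \ref{lem:tee-theta} and \ref{lem:map-assoc}.
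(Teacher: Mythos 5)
Your proposal is correct and follows essentially the same route as the paper: both reduce the deformed condition to the undeformed Clifford connection identity by conjugating with $T_\theta^{\Omega^1,\X}$ and $H_\theta$, using the intertwining relations of Lemma \ref{lem: invariant-c} together with the identification $\nablar^{G_\theta}=\nablar^G_\theta$ and the deformation of the sum $\nablar^G\ox 1+1\ox\nablal^{\slashed{S}}$. The only cosmetic difference is that the paper computes both sides separately and compares, while you transform the right-hand side into the left; the content is identical.
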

\begin{proof} 
Using Lemma \ref{lem: invariant-c} we compute and compare
\begin{align*}
\nablal^{\slashed{S}}_{\theta}\circ c_{\theta}=(T^{\Omega^{1},\X}_{\theta})(\nablal^{\slashed{S}}\circ c)(T^{\Omega^{1},\X}_{\theta})^{-1},
\end{align*}
and 
\begin{align*}
(1\ox_{\theta}c_{\theta})(\sigma_{\theta}\ox_{\theta}1)&(1\ox_{\theta}\nablal^{\slashed{S}}_{\theta}+\nablar^{G_{\theta}}\ox_{\theta}1)\\
&=(T^{\Omega^{1},\X}_{\theta})(1\ox c)H_{\theta}^{-1}H_{\theta}(\sigma\ox 1)H_{\theta}^{-1}H_{\theta}(1\ox\nablal^{\slashed{S}}+\nablar^{G}\ox 1)(T^{\Omega^{1},\X}_{\theta})^{-1}\\
&=(T^{\Omega^{1},\X}_{\theta})(1\ox c)(\sigma\ox 1)(1\ox\nablal^{\slashed{S}}+\nablar^{G}\ox 1)(T^{\Omega^{1},\X}_{\theta})^{-1}.
\end{align*}
Since 
\[
\nablal^{\slashed{S}}\circ c=(1\ox c)(\sigma\ox 1)(1\ox\nablal^{\slashed{S}}+\nablar^{G}\ox 1),
\]
the statement follows.
\end{proof}
\begin{prop}
\label{prop:deformed-dirac-triple}
Let $\slashed{S}\to M$ be a $\mathbb{T}^{2}$-equivariant Dirac bundle. 
The connections $\nablar^{G_{\theta}}$ and $\nablal^{\slashed{S}}_{\theta}$ make $(C^{\infty}(M_{\theta}), L^{2}(M,\slashed{S}),\slashed{D})$
into a Dirac spectral triple over $(C^{\infty}(M_{\theta}),\dag,\Psi_{\theta},\pairing{\cdot}{\cdot}_{\theta})$.
\end{prop}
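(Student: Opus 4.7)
The plan is to verify the four conditions of Definition~\ref{def: Dirac-spectral-triple} one-by-one, leaning on the invariance lemmas of Section~5 (especially Lemma~\ref{lem: invariant-c}) and the fact that the undeformed classical data already constitute a Dirac spectral triple. Throughout, the recipe is the same: transport the undeformed identity along the isomorphisms $T^{\X,\Y}_\theta$, $H_\theta$ and rearrange factors of $\lambda$ using parts (1)--(5) of Lemma~\ref{lem: invariant-c}.

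For condition~(1), the Clifford relation in the undeformed setting reads $(m\circ\Psi)(\rho\ox\eta) = e^{-\beta}m(G)\pairing{\rho^\dag}{\eta}_\B$. Applying $(T^{\Omega^1,\Omega^1}_\theta)^{-1}$ and using $m = m_\theta\circ T^{\Omega^1,\Omega^1}_\theta$, $g = g_\theta\circ T^{\Omega^1,\Omega^1}_\theta$, $\Psi_\theta = T_\theta\Psi T_\theta^{-1}$ (Lemma~\ref{lem:dee-theta}) and $G_\theta = T^{\Omega^1,\Omega^1}_\theta(G)$ should yield the corresponding formula for $\Psi_\theta$, $m_\theta$, $G_\theta$ and $\pairing{\cdot}{\cdot}_\theta$. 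The centrality of $e^{-\beta}m(G)$ (invariant under deformation, by the remark following Lemma~\ref{lem:dim-emm}) ensures no ordering issue. Condition~(2) is immediate: the Hilbert space is unchanged, $\X = \Gamma(M,\slashed{S})$ with its classical left $C^\infty(M_\theta)$-action, and $c_\theta$ is by construction the restriction of the natural Clifford map.

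Condition~(3) follows at once from the definition of $\nablal^{\slashed{S}}_\theta = T^{\Omega^1,\X}_\theta\circ\nablal^{\slashed{S}}$ and part~(3) of Lemma~\ref{lem: invariant-c}:
\[
c_\theta\circ\nablal^{\slashed{S}}_\theta = c_\theta\circ T^{\Omega^1,\X}_\theta\circ\nablal^{\slashed{S}} = c\circ\nablal^{\slashed{S}} = \slashed{D}.
\]
For condition~(4), Theorem~\ref{prop:deformed-uniqueness} already gives that $\nablar^{G_\theta} = \nablar^G_\theta$ is the unique Hermitian torsion-free $\dag$-bimodule connection for the braiding $\sigma_\theta$. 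The compatibility equation~\eqref{eq:compatible2} is precisely the $\theta$-deformed Clifford connection condition~\eqref{eq: theta-Clifford-connection} established in the preceding proposition.

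The main (and only real) obstacle is bookkeeping in condition~(1): one must check carefully that the $\lambda$-phases produced by $T^{\Omega^1,\Omega^1}_\theta$ on the left-hand side match those produced on the right-hand side through the interplay of $\pairing{\cdot}{\cdot}_\theta$, the $\dag_\theta$ involution and the central factor $e^{-\beta}m(G)$. Since both sides come from applying $T^{\Omega^1,\Omega^1}_\theta$ to the undeformed identity and since $m(G)$ is central with trivial $\mathbb{T}^2$-weight, the phases should cancel cleanly, but this is where an explicit homogeneous computation is warranted. Everything else is a direct consequence of the functorial compatibilities recorded in Lemma~\ref{lem: invariant-c}.
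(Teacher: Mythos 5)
Your proposal is correct and follows essentially the same route as the paper: conditions (2)--(4) are dispatched exactly as you describe, and condition (1) is handled by transporting the undeformed Clifford relation along $T^{\Omega^1,\Omega^1}_\theta$ using the identities of Lemma \ref{lem: invariant-c} together with $G_\theta=T^{\Omega^1,\Omega^1}_\theta(G)$ and $\Psi_\theta=T_\theta\Psi T_\theta^{-1}$. The phase bookkeeping you flag as the main obstacle in fact dissolves: since $m=m_\theta\circ T_\theta$, $g=g_\theta\circ T_\theta$ and $m_\theta(G_\theta)=m(G)$, $e^{\beta_\theta}=e^\beta$ are already established, the deformed identity follows by pure composition with no explicit homogeneous computation needed.
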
 
\begin{proof}
We start with 
Condition 1 of Definition \ref{def: Dirac-spectral-triple}. We have that $G_{\theta}=T^{\Omega^{1},\Omega^{1}}_{\theta}(G)$, so by Lemma \ref{lem: invariant-c} we have 
\begin{align}
\label{eq: invariant-m-G}
m_{\theta}(G_{\theta})=m_{\theta}\circ T^{\Omega^{1},\Omega^{1}}_{\theta}(G)=m(G) =\dim M\,{\rm Id}_{\slashed{S}}.
\end{align}
Using Equation \eqref{eq:ee-beta} and Lemma \ref{lem: invariant-c} we have
\begin{align}
\label{eq: invariant-e-beta}
e^{\beta_{\theta}}=-g_\theta(G_\theta)=-g_\theta(T_\theta(G))=-g(G)=e^\beta.
\end{align}
Given $\rho,\eta\in \Omega^1_{\slashed{D}}(M_\theta)$ we have 
\begin{align*}
m_{\theta}\circ\Psi_{\theta}&=m_{\theta}\circ T^{\Omega^{1},\Omega^{1}}_{\theta}\circ\Psi\circ(T^{\Omega^{1},\Omega^{1}}_{\theta})^{-1}=m\circ \Psi\circ (T^{\Omega^{1},\Omega^{1}}_{\theta})^{-1}=g\circ (T^{\Omega^{1},\Omega^{1}}_{\theta})^{-1}=g_{\theta}.
\end{align*}
Condition 2 holds since $c:\Omega^{1}\ox\X\to \X$ and $c_{\theta}=c\circ T^{\Omega^{1},\X}_{\theta}$ so that 
\[c_{\theta}:\Omega^{1}_{\theta}\ox_{\theta}\X_\theta\to \X_{\theta}.\]
For Condition 3, we have 
\[\slashed{D}=c\circ\nablal^{\slashed{S}}=c_{\theta}\circ T^{\Omega^{1},\X}_{\theta}\circ \nablal^{\slashed{S}}=c_{\theta}\circ\nablal^{\slashed{S}}_{\theta}.\]
Condition 4 follows by applying $c_{\theta}$ to Equation \ref{eq: theta-Clifford-connection}.
\end{proof}

\begin{thm} 
Let $\slashed{S}\to M$ be a $\mathbb{T}^{2}$-equivariant Dirac bundle over a compact Riemannian manifold $(M,g)$ and $(C^{\infty}(M_{\theta}),L^{2}(M,\slashed{S}),\slashed{D})$ an associated $\theta$-deformed spectral triple. Then the connection $\nablal^{\slashed{S}}_{\theta}$ satisfies the Weitzenbock formula
\[
\slashed{D}^{2}-\Delta^{\slashed{S}}_{\theta}=c_{\theta}\circ (m_{\theta}\circ\sigma_{\theta}\ox 1)(R^{\nablal^{\slashed{S}}_{\theta}}).
\]
\end{thm}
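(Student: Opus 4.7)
The plan is to reduce the statement directly to Theorem \ref{thm:Weitz1} applied to the deformed Dirac spectral triple. By Proposition \ref{prop:deformed-dirac-triple}, $(C^{\infty}(M_{\theta}),L^{2}(M,\slashed{S}),\slashed{D})$ is a Dirac spectral triple over the braided Hermitian differential structure $(\Omega^1_{\slashed{D}}(M_\theta),\dag,\Psi_\theta,\pairing{\cdot}{\cdot}_\theta,\sigma_\theta)$, and the relevant connections are $\nablar^{G_\theta}$ and $\nablal^{\slashed{S}}_\theta$. All that remains is to check the two hypotheses of Theorem \ref{thm:Weitz1}, namely $\sigma_\theta\Psi_\theta=\Psi_\theta$ and $\Psi_\theta(G_\theta)=G_\theta$.

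For the undeformed data on a Riemannian manifold, these identities are immediate: $\sigma$ is the classical flip map with $\sigma^2=1$, so $\sigma\Psi=\sigma(1+\sigma)/2=(1+\sigma)/2=\Psi$, and the line element $G$ is a symmetric two-tensor, hence fixed by the symmetrisation projection $\Psi$. To transport these to the deformed setting I would invoke Lemma \ref{lem:dee-theta}, which gives $\sigma_\theta=T^{\Omega^1,\Omega^1}_\theta\circ\sigma\circ(T^{\Omega^1,\Omega^1}_\theta)^{-1}$ and $\Psi_\theta=T^{\Omega^1,\Omega^1}_\theta\circ\Psi\circ(T^{\Omega^1,\Omega^1}_\theta)^{-1}$, together with the identity $G_\theta=T^{\Omega^1,\Omega^1}_\theta(G)$ used in the proof of Lemma \ref{lem:Herm-theta}. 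Conjugating by $T^{\Omega^1,\Omega^1}_\theta$ then yields
\[
\sigma_\theta\Psi_\theta=T^{\Omega^1,\Omega^1}_\theta(\sigma\Psi)(T^{\Omega^1,\Omega^1}_\theta)^{-1}=\Psi_\theta,\qquad \Psi_\theta(G_\theta)=T^{\Omega^1,\Omega^1}_\theta(\Psi(G))=G_\theta.
\]

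With both hypotheses verified, Theorem \ref{thm:Weitz1} applies verbatim to $(C^{\infty}(M_\theta),L^2(M,\slashed{S}),\slashed{D})$ and produces the claimed Weitzenbock formula. Any concern about the definition of $\Delta^{\slashed{S}}_\theta$ being consistent with the deformation is already addressed by the invariance statements \eqref{eq: invariant-m-G} and \eqref{eq: invariant-e-beta}, which show $m_\theta(G_\theta)=m(G)$ and $e^{\beta_\theta}=e^{\beta}$, so the scalar prefactors in Definition \ref{defn:laplace} match their classical counterparts. Consequently there is no substantive new computation; the proof is essentially a bookkeeping exercise aligning the intertwiners $T_\theta^{\Omega^1,\Omega^1}$, $T_\theta^{\Omega^1,\X}$, and $H_\theta$ established in Lemma \ref{lem: invariant-c}. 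The main obstacle (such as it is) is ensuring the right conjugation identities are applied in the right places, but this is secured once the two projector-level hypotheses above are obtained.
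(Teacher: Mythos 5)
Your proposal is correct and follows essentially the same route as the paper: reduce to Theorem \ref{thm:Weitz1} via Proposition \ref{prop:deformed-dirac-triple} and verify the two hypotheses $\sigma_\theta\Psi_\theta=\Psi_\theta$ and $\Psi_\theta(G_\theta)=G_\theta$. The only (immaterial) difference is that the paper gets the first identity from $\sigma_\theta=2\Psi_\theta-1$ and the second from $\sigma_\theta(G_\theta)=T_\theta(\sigma(G))=G_\theta$, whereas you conjugate the classical identities by $T_\theta^{\Omega^1,\Omega^1}$ in both cases.
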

\begin{proof}
In view of Theorem \ref{thm:Weitz1} and Proposition \ref{prop:deformed-dirac-triple}, we need only verify that $m_{\theta}\circ\sigma_{\theta}\circ\Psi_{\theta}=m_{\theta}\circ\Psi_{\theta}$ and $\Psi_{\theta}(G_{\theta})=G_{\theta}$. Since $\sigma_{\theta}=2\Psi_{\theta}-1$ the first condition is immediate. 
Using Lemma \ref{lem: invariant-c} again yields
\begin{align*}
\sigma_{\theta}(G_{\theta})=T_\theta\sigma T_\theta^{-1}(T_\theta(G))=T_\theta(\sigma(G))=T_\theta(G)=G_\theta,
\end{align*}
which completes the proof.
\end{proof}
We prove another result about contractions with $G_\theta$.
\begin{lemma}
\label{lem:contraction-3}
For homogeneous $\omega,\rho\in\Omega^1$ and $x\in\X$ we have 
\[
\pairing{G_\theta}{H_\theta^{\Omega^1,\Omega^1,\X}(\omega\ox\rho\ox x)}_{\X_\theta}
=\pairing{G}{\omega\ox\rho}_\X x=\pairing{G}{\omega\ox\rho\ox x}_\X .
\]
\end{lemma}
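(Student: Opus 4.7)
The strategy is to reduce the deformed pairing to the undeformed pairing using the transformation rules from Lemma \ref{lem: invariant-c}, and then verify that the various phase factors cancel. One may assume $\omega,\rho\in\Omega^1_{\slashed{D}}(M)$ and $x\in\X$ are homogeneous. Using the explicit formula from Lemma \ref{lem:map-assoc},
\[
H_\theta(\omega\ox\rho\ox x) = \lambda^{-n_2(\omega)n_1(\rho)-n_2(\omega)n_1(x)-n_2(\rho)n_1(x)}\,\omega\ox_\theta\rho\ox_\theta x,
\]
and since the right inner product of a two-tensor against a three-tensor contracts the first two slots and leaves the third as a left factor, I would first reduce the left-hand side to
\[
\lambda^{-n_2(\omega)n_1(\rho)-n_2(\omega)n_1(x)-n_2(\rho)n_1(x)}\,\pairing{G_\theta}{\omega\ox_\theta\rho}_{\B_\theta} *_\theta x,
\]
where $*_\theta$ is the deformed left action of $\B_\theta$ on $\X_\theta$.

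Next, applying the deformed analogue of Equation \eqref{eq:Riemann} (so $\pairing{G_\theta}{\omega\ox_\theta\rho}_{\B_\theta}=-g_\theta(\omega\ox_\theta\rho)$) together with Lemma \ref{lem: invariant-c}.2, which gives $g=g_\theta\circ T_\theta^{\Omega^1,\Omega^1}$, and the inverse formula $T_\theta^{-1}(\omega\ox_\theta\rho)=\lambda^{n_2(\omega)n_1(\rho)}\omega\ox\rho$, yields
\[
\pairing{G_\theta}{\omega\ox_\theta\rho}_{\B_\theta} = \lambda^{n_2(\omega)n_1(\rho)}\pairing{G}{\omega\ox\rho}_\B.
\]
Since $G$ is bidegree zero, the element $a:=\pairing{G}{\omega\ox\rho}_\B\in\B$ is homogeneous of bidegree $(n_1(\omega)+n_1(\rho),\,n_2(\omega)+n_2(\rho))$, and hence $a*_\theta x=\lambda^{(n_2(\omega)+n_2(\rho))n_1(x)}\,a\cdot x$ by the definition of the deformed product. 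Collecting all phase factors one finds
\[
\lambda^{-n_2(\omega)n_1(\rho)-n_2(\omega)n_1(x)-n_2(\rho)n_1(x)}\cdot\lambda^{n_2(\omega)n_1(\rho)}\cdot\lambda^{(n_2(\omega)+n_2(\rho))n_1(x)}=1,
\]
leaving precisely $\pairing{G}{\omega\ox\rho}_\B\cdot x$, which is the first claimed equality.

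The second equality is essentially definitional: the contraction $\pairing{G}{\cdot}_\X\colon T^{2}_{\slashed{D}}\ox_\B\X\to\X$ used in Definition \ref{defn:laplace} acts by pairing $G$ with the first two slots and letting the resulting element of $\B$ act on $x$ from the left, so $\pairing{G}{\omega\ox\rho\ox x}_\X$ and $\pairing{G}{\omega\ox\rho}_\B\cdot x$ denote the same element of $\X$. The only real work is the phase-factor bookkeeping, which is routine but nontrivial; the key observation enabling the cancellation is the bidegree-zero property of $G$, and the argument mirrors the computation in Lemma \ref{lem:contraction-4}.
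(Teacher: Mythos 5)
Your proof is correct and follows essentially the same route as the paper: both reduce the deformed contraction to the undeformed one via the factorisation of $H_\theta$ from Lemma \ref{lem:map-assoc} and the deformed module structures, with the cancellation hinging on $G$ having bidegree zero. The only cosmetic difference is that you expand $H_\theta$ into explicit phase factors and cancel them by hand, whereas the paper keeps $T_\theta^{\Omega^1,\Omega^1}(\omega\ox\rho)$ bundled and invokes the isometry of $T_\theta$ to absorb the single remaining phase into the deformed left action.
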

\begin{proof}
We compute using the definitions of the deformed inner product and multiplication from Lemma \ref{lem: deformed-module}, and the maps $T_\theta$ and $H_\theta$ from Lemmas \ref{lem:tee-theta} and \ref{lem:map-assoc} to find
\begin{align*}
\pairing{G_\theta}{H_\theta^{\Omega^1,\Omega^1,\X}(\omega\ox\rho\ox x)}_{\X_\theta}
&=\pairing{T^{\Omega^1,\Omega^1}_\theta(G)}{T^{\Omega^1,\Omega^1}_\theta\ox_\theta1\circ T_\theta^{\Omega^1\ox\Omega^1,\X}(\omega\ox\rho\ox x)}_{\X_\theta}\\
&=\lambda^{-(n_2(\omega)+n_2(\rho))n_1(x)}\pairing{T^{\Omega^1,\Omega^1}_\theta(G)}{T^{\Omega^1,\Omega^1}_\theta\ox_\theta1(\omega\ox\rho)\ox_\theta x}_{\X_\theta}\\
&=\lambda^{-(n_2(\omega)+n_2(\rho))n_1(x)}\pairing{T^{\Omega^1,\Omega^1}_\theta(G)}{T^{\Omega^1,\Omega^1}_\theta(\omega\ox\rho)}_{\X_\theta} * x\\
&=\pairing{T^{\Omega^1,\Omega^1}_{\X_\theta}(G)}{T^{\Omega^1,\Omega^1}_\theta(\omega\ox\rho)}_{\X_\theta}  x\\
&=\pairing{G}{\omega\ox\rho}_\X x=\pairing{G}{\omega\ox\rho\ox x}_\X 
\end{align*}
as claimed.
\end{proof}

\begin{prop} 
Let $\slashed{S}\to M$ be a $\mathbb{T}^{2}$-equivariant Dirac bundle over a compact Riemannian manifold $(M,g)$, $\X=\Gamma(M,\slashed{S})$ and $(C^{\infty}(M_{\theta}),L^{2}(M,\slashed{S}),\slashed{D})$ the associated $\theta$-deformed spectral triple. The connection Laplacian $\Delta^{\slashed{S}}_{\theta}:\Gamma(M,\slashed{S})\to L^{2}(M,\slashed{S})$ remains undeformed, that is $\Delta^{\slashed{S}}_{\theta}=\Delta^{\slashed{S}}$.
\end{prop}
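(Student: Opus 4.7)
The plan is to expand the definition of $\Delta^{\slashed{S}}_\theta$ term by term and match each factor to its undeformed counterpart using the invariance identities already collected. By definition
\[
\Delta^{\slashed{S}}_\theta(x)=e^{-\beta_\theta}\,m_\theta(G_\theta)\,\pairing{G_\theta}{(\nablar^{G_\theta}\!\ox_\theta 1+1\ox_\theta\nablal^{\slashed{S}}_\theta)\circ\nablal^{\slashed{S}}_\theta(x)}_{\X_\theta},
\]
and the three factors will be dealt with separately.

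First, for the scalar prefactor, Equations \eqref{eq: invariant-m-G} and \eqref{eq: invariant-e-beta} give $m_\theta(G_\theta)=m(G)=\dim M\cdot\mathrm{Id}$ and $e^{\beta_\theta}=e^\beta$, so $e^{-\beta_\theta}m_\theta(G_\theta)=e^{-\beta}m(G)$ reproduces the corresponding prefactor in $\Delta^{\slashed{S}}$.

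Second, I want to identify the deformed second covariant derivative as $H_\theta$ applied to the undeformed one. By Proposition \ref{prop:deformed-uniqueness} the Levi-Civita connection deforms as $\nablar^{G_\theta}=\nablar^G_\theta$, and by definition $\nablal^{\slashed{S}}_\theta=T^{\Omega^1,\X}_\theta\circ\nablal^{\slashed{S}}$. Feeding this into Lemma \ref{lem:well-defined-pieces} yields
\[
(\nablar^{G_\theta}\!\ox_\theta 1+1\ox_\theta\nablal^{\slashed{S}}_\theta)\circ\nablal^{\slashed{S}}_\theta(x)=H_\theta\bigl((\nablar^G\ox 1+1\ox\nablal^{\slashed{S}})\circ\nablal^{\slashed{S}}(x)\bigr),
\]
so the only difference between the deformed and undeformed second covariant derivatives is the linear bijection $H_\theta$.

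Third, I apply Lemma \ref{lem:contraction-3}, which is precisely the statement that pairing against $G_\theta$ after $H_\theta$ collapses to pairing against $G$:
\[
\pairing{G_\theta}{H_\theta(\omega\ox\rho\ox x)}_{\X_\theta}=\pairing{G}{\omega\ox\rho\ox x}_\X.
\]
Since $\nablar^G$, $\nablal^{\slashed{S}}$ and $\dee$ are $\mathbb{T}^2$-equivariant (degree zero), the image of $\nablal^{\slashed{S}}\circ\nablal^{\slashed{S}}$ is a sum of homogeneous simple tensors to which this lemma applies; extending linearly then combines the three steps to
\[
\Delta^{\slashed{S}}_\theta(x)=e^{-\beta}m(G)\pairing{G}{(\nablar^G\ox 1+1\ox\nablal^{\slashed{S}})\circ\nablal^{\slashed{S}}(x)}_\X=\Delta^{\slashed{S}}(x).
\]

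I do not expect any serious obstacle; the whole proof is a bookkeeping exercise assembling Lemmas \ref{lem:well-defined-pieces}, \ref{lem:contraction-3}, Theorem \ref{prop:deformed-uniqueness}, and the invariance identities \eqref{eq: invariant-m-G}--\eqref{eq: invariant-e-beta}. The only subtlety worth double-checking is that the reduction in Lemma \ref{lem:contraction-3} is compatible with extending by linearity across the inhomogeneous pieces produced by the connection, which the equivariance of all operators involved makes immediate.
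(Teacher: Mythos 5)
Your proposal is correct and follows essentially the same route as the paper: identify the deformed second covariant derivative as $H_\theta$ applied to the undeformed one (Lemma \ref{lem:well-defined-pieces} together with $\nablar^{G_\theta}=\nablar^G_\theta$), collapse the contraction with $G_\theta$ via Lemma \ref{lem:contraction-3}, and note the invariance of the prefactor $e^{-\beta}m(G)$. The only cosmetic difference is that you cite Theorem \ref{prop:deformed-uniqueness} explicitly where the paper leaves that identification implicit.
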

\begin{proof}
First consider the map
$
(\nablar_\theta^{G_\theta}\ox_\theta1+1\ox_\theta\nablal^{\slashed{S}}_\theta)\circ\nablal^{\slashed{S}}_\theta:\X_\theta\to \Omega^1_\theta\ox_\theta\Omega^1_\theta\ox_\theta\X_\theta,
$
and recall that
\begin{align*}
(\nablar_\theta^{G}\ox_\theta1+1\ox_\theta\nablal^\slashed{S}_\theta)\circ\nablal^\slashed{S}_\theta(x)
&=H_{\theta}(\nablar^{G}\ox1+1\ox\nablal^{\slashed{S}})(T_\theta)^{-1}\circ T_\theta(\nablal^{\slashed{S}}(x))\\
&=H_{\theta}((\nablar^{G}\ox1+1\ox\nablal^{\slashed{S}})\circ\nablal^{\slashed{S}}(x)).
\end{align*}
To obtain the connection Laplacian for $\X$, we  contract  with $G_\theta=\sum_jT^{\Omega^1,\Omega^1}_\theta(\omega_j\ox\omega_j^*)$.
Using Lemma \ref{lem:contraction-3} we have
\begin{align*}
\pairing{G_\theta}{(\nablar_\theta^{G_\theta}\ox_\theta1+1\ox_\theta\nablal^\slashed{S}_\theta)\circ\nablal^\slashed{S}_\theta(x)}_{\X_\theta}
&=\pairing{G_\theta}{H^{\Omega^1,\Omega^1,\X}_\theta(\nablar^G\ox1+1\ox\nablal^{\slashed{S}})\circ\nablal^{\slashed{S}}(x)}_{\X_\theta}\\
&=\pairing{G}{(\nablar^{G}\ox1+1\ox\nablal^{\slashed{S}})\circ\nablal^{\X}(x)}_\X.
\end{align*}
Now since $e^{-\beta_\theta}m(G_\theta)=e^{-\beta}m(G)$, we have
\begin{align*}
\Delta^{\slashed{S}}_\theta(x)
&=e^{-\beta_\theta}m(G_\theta)\pairing{G_\theta}{(\nablar_\theta^{G_\theta}\ox_\theta1+1\ox_\theta\nablal^{\slashed{S}}_\theta)\circ\nablal^{\slashed{S}}_\theta(x)}_{\X_\theta}\\
&=e^{-\beta}m(G)\pairing{G}{(\nablar^{G}\ox1+1\ox\nablal^{\slashed{S}})\circ\nablal^{\slashed{S}}(x)}_\X\\
&=\Delta^{\slashed{S}}(x).\qedhere
\end{align*}
\end{proof}

\begin{corl}
\label{cor:cliff-R-inv}
Let $\slashed{S}$ be a $\mathbb{T}^{2}$-equivariant Dirac bundle over a Riemannian spin manifold $(M,g)$ and $(C^{\infty}(M_{\theta}),L^{2}(M,\slashed{S}),\slashed{D})$ an associated $\theta$-deformed spectral triple. Then the Clifford representation of the curvature of $\nabla^{\slashed{S}}_{\theta}$ remains undeformed, that is 
$$
c_{\theta}\circ((m_{\theta}\circ\sigma_{\theta})\ox 1)(R^{\nablal^{\slashed{S}}_{\theta}})=c\circ((m\circ\sigma)\ox 1)(R^{\nablal^{\slashed{S}}}).
$$

In particular, if $\slashed{S}$ is the spinor bundle of a manifold, the Lichnerowicz formula says that $c_{\theta}\circ((m_{\theta}\circ\sigma_{\theta})\ox 1)(R^{\nablal^{\slashed{S}}_{\theta}})=r_\theta/4=r/4$ as elements of $C(M)$. 
\end{corl}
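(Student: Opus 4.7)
The plan is to exploit the Weitzenbock formula proved just above in conjunction with the invariance of the connection Laplacian established in the preceding proposition. The key observation is that both sides of the Weitzenbock identity
\[
\slashed{D}^{2} - \Delta^{\slashed{S}}_{\theta} = c_\theta \circ ((m_\theta \circ \sigma_\theta) \ox 1)(R^{\nablal^{\slashed{S}}_{\theta}})
\]
are manifestly $\theta$-independent on the left: the Hilbert space $L^{2}(M,\slashed{S})$ and the self-adjoint operator $\slashed{D}$ are unchanged by deformation (only the algebra structure and its module action are), so $\slashed{D}^{2}$ is the same operator; and by the preceding proposition $\Delta^{\slashed{S}}_{\theta} = \Delta^{\slashed{S}}$. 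Writing the same Weitzenbock identity for $\theta = 0$ and equating the two right-hand sides yields the first assertion.

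For the Lichnerowicz half, I would first recall the classical Lichnerowicz formula for the spinor bundle on $(M,g)$, namely $\slashed{D}^{2} = \Delta^{\slashed{S}} + r/4$, which in the notation of this section reads
\[
c \circ ((m \circ \sigma) \ox 1)(R^{\nablal^{\slashed{S}}}) = \tfrac{r}{4}.
\]
Combining this with the first assertion of the corollary gives
\[
c_\theta \circ ((m_\theta \circ \sigma_\theta) \ox 1)(R^{\nablal^{\slashed{S}}_\theta}) = \tfrac{r}{4},
\]
and the identification $r_\theta = r$ (Theorem \ref{thm:curv-theta}) delivers the equality $r/4 = r_\theta/4$ as elements of $C(M)$, completing the statement.

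The whole argument is essentially an observation rather than a computation, and there is no serious obstacle: everything has been arranged precisely so that the Clifford contraction against $m \circ \sigma$, combined with the action $c$, absorbs the $H_\theta$-transformation that the curvature tensor $R^{\nablal^{\slashed{S}}}$ individually undergoes under deformation. The only point one might want to explicitly verify when writing the proof is that the preceding theorem's Weitzenbock identity is being applied to the same section $x \in \X$ before and after deformation, which is automatic since $\X = \Gamma(M,\slashed{S})$ is preserved as a linear space by the deformation procedure of Lemma \ref{lem: deformed-module}.
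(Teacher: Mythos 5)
Your proposal is correct and is essentially the paper's own argument: the authors prove the corollary by noting that $\slashed{D}^2-\Delta^{\slashed{S}}$ is invariant under deformation (since $\slashed{D}$ is unchanged and $\Delta^{\slashed{S}}_\theta=\Delta^{\slashed{S}}$ by the preceding proposition) and then invoking Theorem \ref{thm:curv-theta} for $r_\theta=r$ in the Lichnerowicz statement, exactly as you do.
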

\begin{proof}
This follows from the invariance of $\slashed{D}^2-\Delta^{\slashed{S}}$ and Theorem \ref{thm:curv-theta}.
\end{proof}

\end{document}